\numberwithin{equation}{section}
\theoremstyle{plain}
\newtheorem{thm}{Theorem}[section]
\newtheorem{lem}[thm]{Lemma}
\newtheorem{prop}[thm]{Proposition}
\newtheorem{cor}[thm]{Corollary}
\newtheorem*{thm*}{Theorem}
\newtheorem*{lem*}{Lemma}
\newtheorem*{prop*}{Proposition}
\newtheorem*{cor*}{Corollary}
\theoremstyle{definition}
\newtheorem{defn}[thm]{Definition}
\newtheorem*{defn*}{Definition}
\newtheorem{ex}[thm]{Example}
{}
\newtheorem{rem}[thm]{Remark}
\newtheorem*{rem*}{Remark}
\newtheorem{setup}[thm]{Setup}
\newtheorem*{setup*}{Setup}
\theoremstyle{remark}
{}
{}
{}
\def\iso{\cong}
\def\tensor{\otimes}
\def\a{\alpha}
\def\S{\Sigma}
\def\Z{\mathbb{Z}}
\def\D{\mathsf{D}}
\def\mcT{\mathcal{T}}
\def\mcU{\mathcal{U}}
\def\sfT{\mathsf{T}}
\def\sfU{\mathsf{U}}
\def\sfS{\mathsf{S}}
\def\mcS{\mathcal{S}}
\def\mcV{\mathcal{V}}
\def\sfV{\mathsf{V}}
\def\mcA{\mathcal{A}}
\def\mcB{\mathcal{B}}
\def\mcF{\mathcal{F}}
\def\sfF{\mathsf{F}}
\def\mcG{\mathcal{G}}
\def\sfM{\mathsf{M}}
\def\unit{\mathbf{1}}
\def\op{\mathsf{op}}
\DeclareMathOperator{\ob}{ob}
\DeclareMathOperator{\im}{im}
\DeclareMathOperator{\Hom}{Hom}
\DeclareMathOperator{\colim}{colim}
\DeclareMathOperator{\hocolim}{hocolim}
\DeclareMathOperator{\Modcat}{\mathsf{Mod}}
\DeclareMathOperator{\cModcat}{\mathcal M\mathrm{od}}
\DeclareMathOperator{\Ident}{Id}
\newcommand{\commd}[2]{\begin{tikzpicture}[scale=.75]#1\path[->,font=\scriptsize]#2;\end{tikzpicture}}
\newcommand{\xto}{\xrightarrow}
\renewcommand{\to}{\longrightarrow}
\title[Enriched representability theorems]{Enrichment and representability for triangulated categories}
\author{Johan Steen}
\address{Johan Steen \\ Institutt for matematiske fag \\ NTNU \\ 7491 Trondheim \\ Norway}
\email{johan.steen@math.ntnu.no}
\author{Greg Stevenson}
\address{Greg Stevenson \\ Universit\"at Bielefeld \\ Fakult\"at f\"ur Mathematik \\ BIREP Gruppe \\ Postfach 10\,01\,31 \\ 33501 Bielefeld \\ Germany}
\email{gstevens@math.uni-bielefeld.de}
\thanks{The first author was supported by a Norwegian Research Council project (NFR~231000) and the second author was partly supported by a fellowship from the Alexander von Humboldt Foundation during the period in which this research was conducted.}
\begin{document}

%\subjclass[2000]{?}

%\keywords{}

\begin{abstract}
    \noindent Given a fixed tensor triangulated category $\sfS$ we consider triangulated categories $\sfT$ together with an $\sfS$-enrichment which is compatible with the triangulated structure of $\sfT$. It is shown that, in this setting, an enriched analogue of Brown representability holds when both $\sfS$ and $\sfT$ are compactly generated. A natural class of examples of such enriched triangulated categories categories are module categories over separable monoids in $\sfS$. In this context we prove a version of the Eilenberg--Watts theorem for exact coproduct and copower preserving $\sfS$-functors, i.e., we show that any such functor between the module categories of separable monoids in $\sfS$ is given by tensoring with a bimodule.
\end{abstract}

\maketitle

\tableofcontents

\section{Introduction}
Over the last three decades, the importance and strength of compatible monoidal structures on triangulated categories has been continually highlighted. This is, for instance, exemplified in classification theorems of Devinatz, Hopkins, and Smith \cite{DevinatzHopkinsSmith}, Neeman \cite{NeeChro}, and Thomason \cite{Thomclass}, which describe various lattices of thick tensor ideals in terms of associated topological spaces. Of particular motivational relevance for us is the article of Thomason where it is shown that the thick tensor ideals in the category of perfect complexes over a reasonable scheme are classified by certain subsets of the topological space underlying the scheme. In fact, one can even recover the space if one knows the lattice of tensor ideals. More recently, Balmer \cite{BaSpec} has produced a very elegant framework into which these classifications fit, and shown that from the perfect complexes on a reasonable scheme, together with the left derived tensor product, one can actually reconstruct the scheme and not just the space. This is a very striking result; rephrasing slightly, it implies that from this data one can recover anything one could produce from the original scheme. In particular, one can get an enhancement of the derived category. This indicates that the existence of an exact monoidal structure somehow rigidifies the otherwise frequently rather floppy derived category.

It thus seems natural to ask exactly how much one can extract from the existence of an exact monoidal structure on a triangulated category or, more generally, from an action of such a category on another triangulated category. One way of formalising this setting is to consider enriched categories: given a rigidly compactly generated tensor triangulated category $\sfS$ and a well behaved action of $\sfS$ on a compactly generated triangulated category $\sfT$ one can produce an $\sfS$-enrichment of $\sfT$ which is compatible with the triangulated structures. The aim of this paper is to begin exploring this setting and to do some advertising by showing that the presence of such an enrichment actually allows one to prove some ``enhancement-flavoured'' statements.

The first main part of the paper deals with extending Brown representability to our enriched context. Classically, representability theorems have been important in algebraic topology, dating back to Brown's result on the representability of certain functors out of the homotopy category.  The study of representability of cohomological functors out of triangulated categories is more recent, but has been very fruitful, starting with the pioneering work of Bousfield \cite{BousfieldLSH} and flourishing with B\"okstedt--Neeman \cite{NeeHolim} and Neeman \cite{NeeGrot}. It is in this last paper that Neeman proves the Brown representability theorem for compactly generated triangulated categories, which is an immensely useful tool. The fundamental importance of Brown representability has led to it being generalised in related directions, see for instance \cite{ChornyBrown}, and prompted us to ask if one could adapt it to the enriched setting. It turns out that the answer is `yes' as we show in Theorem~\ref{thm_brown} --- Neeman's original proof is easily modified once one finds the correct hypotheses in the enriched setting.

\begin{thm}[{see Theorem~\ref{thm_brown}}]
    Let $\sfS$ be a compactly generated tensor triangulated category.  Assume that $\mcT$ is a copowered $\sfS$-category whose underlying category $\sfT$ is compactly generated triangulated.  Then any $\sfS$-functor $\mcF\colon \mcT^\op \to \mcS$ which preserves powers and has an underlying exact and coproduct preserving functor is representable.
\end{thm}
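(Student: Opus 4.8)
The plan is to deduce the statement from the classical Brown representability theorem for the compactly generated triangulated category $\sfT$ and then to promote the resulting representing object to the enriched setting, the decisive inputs being that $\mcF$ preserves powers and that $\sfS$ is compactly generated. Write $\mcF_0\colon\sfT^\op\to\sfS$ for the underlying functor of $\mcF$ and consider $G:=\sfS(\unit,\mcF_0(-))\colon\sfT^\op\to\mathrm{Ab}$. Since $\mcF_0$ is exact and $\sfS(\unit,-)$ is cohomological, $G$ is cohomological; since $\mcF_0$ sends coproducts to products and $\sfS(\unit,-)$ preserves products, $G$ sends coproducts to products. (Note that no compactness hypothesis on $\unit$ is needed.) Neeman's Brown representability theorem for $\sfT$ then produces an object $r\in\sfT$ together with a natural isomorphism $\eta\colon\sfT(-,r)\xrightarrow{\sim}G$; let $\phi:=\eta_r(\id_r)\in G(r)=\sfS(\unit,\mcF(r))$ be the corresponding morphism $\unit\to\mcF(r)$ in $\sfS$.

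Next, the enriched Yoneda lemma identifies the $\sfS$-natural transformations $\mcT(-,r)\to\mcF$ with the underlying set $\sfS(\unit,\mcF(r))$ of $\mcF(r)$; let $\Phi\colon\mcT(-,r)\to\mcF$ be the $\sfS$-natural transformation corresponding to $\phi$. A direct comparison via the ordinary and enriched Yoneda lemmas shows that the underlying natural transformation $\Phi_0$ coincides with $\eta$, and is therefore an isomorphism. It remains to upgrade this to the statement that each component $\Phi_t\colon\mcT(t,r)\to\mcF(t)$ is an isomorphism \emph{in $\sfS$}. Since $\sfS$ is compactly generated we may fix a set $\mcG$ of compact generators closed under shifts, and it suffices to show that $\sfS(g,\Phi_t)$ is an isomorphism for every $g\in\mcG$ and every $t\in\sfT$.

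This is where the enrichment is used. As $\mcT$ is copowered the copower $g\odot t$ exists, and its universal property gives $\mcT(g\odot t,r)\cong[g,\mcT(t,r)]$, while the hypothesis that $\mcF$ preserves powers gives $\mcF(g\odot t)\cong[g,\mcF(t)]$; applying $\sfS(\unit,-)$ and using $\sfS(\unit,[g,-])\cong\sfS(g,-)$ yields natural identifications $\sfS(g,\mcT(t,r))\cong\sfT(g\odot t,r)$ and $\sfS(g,\mcF(t))\cong G(g\odot t)$. Because $\Phi$ is $\sfS$-natural it commutes with the canonical comparison morphisms to these powers, so the square relating $\Phi_{g\odot t}$, $[g,\Phi_t]$, and the two power-comparison isomorphisms commutes; applying $\sfS(\unit,-)$ to it and using $\Phi_0=\eta$ shows that, under the identifications above, $\sfS(g,\Phi_t)$ is carried to $\eta_{g\odot t}\colon\sfT(g\odot t,r)\to G(g\odot t)$, which is an isomorphism by the first paragraph. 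Hence $\sfS(g,\Phi_t)$ is an isomorphism for all $g\in\mcG$, so $\cone(\Phi_t)=0$ and $\Phi_t$ is an isomorphism; thus $\Phi$ is an isomorphism of $\sfS$-functors and $\mcF\cong\mcT(-,r)$ is representable.

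The step I expect to demand the most care is the verification that the $\sfS$-naturality of $\Phi$ forces compatibility with the power-preservation isomorphisms $\mcT(g\odot t,r)\cong[g,\mcT(t,r)]$ and $\mcF(g\odot t)\cong[g,\mcF(t)]$, i.e.\ that the comparison square genuinely commutes; this is precisely the point at which having identified the right hypotheses (copoweredness of $\mcT$, power-preservation of $\mcF$, compact generation of $\sfS$) does the work, and everything else is an assembly of standard facts. As an alternative one could bypass the reduction to the non-enriched theorem and instead rerun Neeman's transfinite construction directly inside $\mcT$, building the representing object as a homotopy colimit of copowers $g\odot t$ with $g$ a compact generator of $\sfS$ and $t$ a compact generator of $\sfT$; the bookkeeping is essentially the same, with the power-preservation of $\mcF$ again playing the central role.
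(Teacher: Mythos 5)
Your argument is correct, and it takes a genuinely different route from the paper. The paper does not invoke the classical theorem as a black box: it reruns Neeman's construction inside $\mcT$, building a tower $X_0 \to X_1 \to \cdots$ of coproducts of compact generators (with the approximating transformations produced at each stage by the weak enriched Yoneda lemma), taking $X = \hocolim_i X_i$, proving a vanishing lemma ($\sfS(\unit,\S^i Z_g)=0$ for $g$ in the generating set) and a localizing-subcategory lemma, and only then performing the compact-testing step: for compact $c\in\sfS^c$ it identifies $\sfS(c,\mcT(g,X))\to\sfS(c,\mcF g)$ with $\sfS(\unit,\mcT(c\odot g,X))\to\sfS(\unit,\mcF(c\odot g))$ via copowers and power preservation, and concludes by compact generation of $\sfS$. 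That last step is exactly your third paragraph, and the commutativity of the comparison square that you flag as delicate is asserted in the paper with the same justification (``commutes by naturality''); it does indeed follow from the transposed form of the $\sfS$-naturality hexagon at the pair $(t, g\odot t)$ together with the explicit description of the copower and power comparison maps, so you are not hiding anything the paper does not also gloss. By contrast, your first two paragraphs replace the entire tower construction: you obtain the representing object from ordinary Brown representability applied to $\sfS(\unit,\mcF_0(-))$, lift to an $\sfS$-natural transformation by the weak enriched Yoneda lemma, and identify its underlying transformation with the classical isomorphism by the ordinary Yoneda lemma. What this buys is economy and, notably, weaker hypotheses: because the underlying isomorphism is available at \emph{every} object of $\sfT$, you never need $c\odot g$ to be compact or to lie in a chosen generating set, so the Setup's assumptions that the compacts of $\sfS$ form a tensor subcategory, that copowering compacts by compacts preserves compactness, and that the represented functors $\mcT(-,t)_0$, $\mcT(t,-)_0$ are exact play no role in your proof; the paper needs the copower-compactness assumption precisely because its vanishing lemma is only established on the generating set used in the construction. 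What the paper's internal construction buys instead is an explicit description of the representing object as a homotopy colimit of copowers of generators, and intermediate statements (the enriched localizing-subcategory lemma) formulated at the enriched level.
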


The precise conditions on the compatibility of the triangulated structures (given below) immediately shows that any representable functor is precisely of this form.

The second part of the paper deals with a tensor triangular version of the Eilenberg--Watts theorem. The classical Eilenberg--Watts theorem asserts that, given rings $A$ and $B$, any colimit preserving functor from $A$-modules to $B$-modules is given by tensoring with a bimodule, namely the image of $A$ under said functor. The proof of this theorem is rather elegant, and has been generalised to many other situations including to the setting of model categories by Hovey \cite{Hovey_EW}. The key abstract components are a suitable ambient category (abelian groups in the classical setting) in which to consider bimodule objects and an enrichment (again in abelian groups in the classical setting) in order to define the desired natural transformation.

There is, in general, no ambient triangulated category that could play the role of the category of abelian groups. However, given a separable monoid $A$ in a fixed tensor triangulated category $\sfS$, it has been shown by Balmer that the category of $A$-modules in $\sfS$ (in the naive sense) is triangulated \cite{Balmer_sep}. Thus, given two such monoids $A$ and $B$, we can consider tensor products and bimodules in the ambient category $\sfS$. Moreover, in this context, one can naturally view the module categories over $A$ and $B$ as enriched in $\sfS$, and we show that there is an analogue of the Eilenberg--Watts theorem for $\sfS$-functors between them.

\begin{thm}[see Theorem~{\ref{thm:tensorrep}}] \label{thm:informaltensorrep}
    Let $A$ and $B$ be separable monoids in a tensor triangulated category $\sfS$ compactly generated by the tensor unit $\unit$.  An $\sfS$-functor $\mcF\colon \cModcat_\sfS A \to \cModcat_\sfS B$ preserves copowers and has an exact and coproduct preserving underlying functor if and only if $\mcF \iso -\tensor_A Y$, for some $A$-$B$-bimodule $Y$.
\end{thm}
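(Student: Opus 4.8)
The argument follows the classical Eilenberg--Watts strategy, transported into the $\sfS$-enrichment. The ``if'' direction is a routine verification: for an $A$-$B$-bimodule $Y$ the functor $-\tensor_A Y$ is built from the exact bifunctor $-\tensor-$ on $\sfS$ by passing to the (split, by separability of $A$) coequalizer defining $\tensor_A$, so it carries a canonical $\sfS$-enrichment, it sends copowers to copowers because copowers in $\cModcat_\sfS A$ and $\cModcat_\sfS B$ are computed by $-\tensor-$ in $\sfS$, and its underlying functor is exact and coproduct preserving because $-\tensor Y$ is so on $\sfS$ and splitting an idempotent commutes with triangles, coproducts and retracts. So the substance is the converse.

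Assume $\mcF$ preserves copowers and has an exact, coproduct preserving underlying functor, and write $U$ for restriction of scalars out of $\cModcat_\sfS A$ (and likewise for $B$). Set $Y=\mcF(A)$; as an object of $\cModcat_\sfS B$ it already carries a $B$-action, and I put an $A$-action on it as follows. Since $\mcF$ is an $\sfS$-functor it induces a morphism of monoids $\cModcat_\sfS A(A,A)\to\cModcat_\sfS B(Y,Y)$ in $\sfS$; the free module $A$ (the image of $\unit$ under extension of scalars) has enriched endomorphism monoid isomorphic to $A$ --- or $A^{\op}$, according to one's module conventions --- via the adjunction between extension and restriction of scalars, so composing with the tautological action of $\cModcat_\sfS B(Y,Y)$ on $Y$ makes $Y$ an $A$-module. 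This action commutes with the $B$-action because endomorphisms in $\cModcat_\sfS B$ are $B$-linear, hence $Y$ is an $A$-$B$-bimodule, and tautologically $A\tensor_A Y\iso Y=\mcF A$.

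Next I would construct an $\sfS$-natural transformation $\eta\colon-\tensor_A Y\Rightarrow\mcF$. For an $A$-module $M$ the same scalar-extension adjunction identifies $\cModcat_\sfS A(A,M)$ with the underlying object $UM$; composing the enriched functoriality map $\cModcat_\sfS A(A,M)\to\cModcat_\sfS B(\mcF A,\mcF M)$ with the evaluation of $\cModcat_\sfS B(\mcF A,\mcF M)$ on $\mcF A$ (the counit of the copower--hom adjunction) produces a $B$-module map from the copower of $\mcF A$ by $UM$ to $\mcF M$, which one checks coequalizes the parallel pair presenting $M\tensor_A Y$ as a quotient of that copower. It therefore descends to the desired $\eta_M\colon M\tensor_A Y\to\mcF M$, and $\sfS$-naturality is inherited from the enriched naturality of its constituents. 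I expect this to be the main obstacle: assembling the enriched evaluation, the copower counit and the scalar-extension identifications into a single morphism, and above all checking that it factors through the split coequalizer defining $-\tensor_A Y$, is where the enrichment bookkeeping is heaviest; the supporting identifications (the monoid $\cModcat_\sfS A(A,A)$, the identification of $\cModcat_\sfS A(A,M)$ with $UM$, and the commutation of the two actions on $\mcF A$) also need to be pinned down, though they are essentially formal consequences of that same adjunction.

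Finally I would show $\eta$ is an isomorphism. On a free module $A\tensor X$ both sides are canonically $X\tensor Y$ --- the domain because $(A\tensor X)\tensor_A Y\iso X\tensor Y$, the codomain because $A\tensor X$ is the copower of $A$ by $X$ and $\mcF$ preserves copowers --- and one checks $\eta_{A\tensor X}$ is this canonical identification, naturally in $X\in\sfS$. For arbitrary $M$, separability of $A$ realizes $M$ as the image of an idempotent $e$ on the free module on $UM$; since $\mcF$ and $-\tensor_A Y$ are additive they carry this splitting to splittings of $\mcF e$ and $e\tensor_A Y$, the square relating $\eta$ on the free module with $\eta_M$ through these idempotents commutes by naturality, and $\eta$ on the free module is an isomorphism, so $\eta_M$ is too; hence $\mcF\iso-\tensor_A Y$. (Conceptually, this last step expresses that $\cModcat_\sfS A$ is freely generated by $A$ under cones, coproducts and copowers within the relevant class of $\sfS$-categories, so that $\sfS$-functors out of it of the given type are pinned down by their value on $A$ together with the induced $A$-action.)
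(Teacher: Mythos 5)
Your proposal is correct, and its first half coincides with the paper's own construction: the $A$-$B$-bimodule structure on $\mcF A$ via $[A,A]_A\cong A$ and $\mcF_{A,A}$, and the transformation $\a_M\colon M\tensor_A\mcF A\to\mcF M$ obtained by factoring $\mcF_{A,M}^\sharp(\rho_M^\flat\tensor 1)$ through the split coequalizer, are exactly Proposition~\ref{prop:FA-bimod} and Lemmas~\ref{lem:compare}--\ref{lem:wattsunit}. Where you genuinely diverge is the final step. The paper only proves $\a_A$ invertible, then shows the subcategory on which $\a$ is invertible is localizing (Lemma~\ref{lem:wattsloc}, which is where exactness and coproduct preservation of the underlying functors enter) and concludes via the fact that $A$ is a compact generator of $\Modcat_\sfS A$ (Lemma~\ref{lem:Agen}, which is where compact generation of $\sfS$ by $\unit$ is used). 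You instead use separability to split every module $M$ off the free module $U M\odot A$, identify $\eta$ on copowers of $A$ with $X\odot\eta_A$ via the comparison maps of Lemma~\ref{lem_copower_pres} (both functors preserving copowers), and finish with ``a retract of an isomorphism is an isomorphism''. This buys something real: your converse uses only the enrichment and copower preservation, so it in effect proves the stronger statement that a copower preserving $\sfS$-functor is automatically isomorphic to $-\tensor_A\mcF A$ (and hence automatically has exact, coproduct preserving underlying functor by Proposition~\ref{prop:tensorY}), and it avoids both the localizing-subcategory argument and the compact generation of $\Modcat_\sfS A$ entirely. The price is the one verification the paper never needs beyond $X=\unit$: that $\eta_{X\odot A}$ really is the canonical identification, equivalently that $\eta$ is compatible with the copower comparison maps. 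That compatibility is a formal consequence of the enriched naturality you establish (evaluate naturality at the hom object $[A,X\odot A]_A$ and the copower unit), together with the identification of $\eta_A$ with the canonical isomorphism $A\tensor_A\mcF A\cong\mcF A$ of Lemma~\ref{lem:wattsunit}, so your ``one checks'' is routine --- but it, and the fact that $A\tensor_A Y\cong Y$ itself requires the separability section $\sigma$ rather than being tautological, are the points that would need to be written out.
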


The paper is organized as follows: In Section 2 we give the necessary background on enriched categories for stating and proving the main theorems.  In Section 3 we prove our enriched analogue of Brown representability, namely that certain power preserving functors out of enriched categories are representable.  In Section 4 we recall, with significant detail, the relevant facts from the theory of separable monoids and prove our enriched Eilenberg--Watts theorem.

\section{Preliminaries on enriched categories}\label{sec:prelim}

Let $(\sfV, \otimes, \unit)$ be a closed symmetric monoidal category, whose internal hom we denote by $\mcV(-,-)$. We recall that a $\sfV$-category (or a category enriched in $\sfV$) $\mcA$ is a collection of objects $\ob \mcA$, for each $a,a'\in \ob \mcA$ an object of morphisms $\mcA(a,a')\in\sfV$, composition maps for each $a,a',a''\in \ob \mcA$
\begin{displaymath}
    \mcA(a',a'')\otimes \mcA(a,a') \stackrel{\circ}{\to} \mcA(a,a'')
\end{displaymath}
and units $i_a \colon \unit \to \mcA(a,a)$ such that the natural associativity and unitality constraints are satisfied.  The category $\sfV$ naturally gives rise to a $\sfV$-category $\mcV$ whose objects of morphisms for $x,y\in\sfV$ are $\mcV(x,y)$. For the precise diagrams that must be satisfied, further details on the self-enrichment of $\sfV$, and a more complete treatment of the facts we recall here the reader can consult \cite{Kelly}.

Given $\sfV$-categories $\mcA$ and $\mcB$ a \emph{$\sfV$-functor} $\mcF\colon \mcA \to \mcB$ is given by an assignment, which is also denoted by $\mcF$
\begin{displaymath}
\mcF\colon \ob \mcA \to \ob\mcB
\end{displaymath}
together with maps in $\sfV$ for all $a,a' \in \mcA$
\begin{displaymath}
    \mcF_{a,a'}\colon \mcA(a,a') \to \mcB(\mcF a, \mcF a').
\end{displaymath}
These maps must be compatible with composition in the sense that the diagrams
\begin{displaymath}
    \commd{
        \node (1) at (0,2) {$\mcA(a',a'')\otimes \mcA(a,a')$};
        \node (2) at (8,2) {$\mcA(a,a'')$};
        \node (3) at (0,0) {$\mcB(\mcF a', \mcF a'') \otimes \mcB(\mcF a, \mcF a')$};
        \node (4) at (8,0) {$\mcB(\mcF a, \mcF a'')$};
    }{
        (1) edge node[auto] {$\circ_\mcA$} (2)
        (3) edge node[auto] {$\circ_\mcB$} (4)
        (1) edge node[auto] {$\mcF_{a',a''} \otimes \mcF_{a,a'}$} (3)
        (2) edge node[auto] {$\mcF_{a,a''}$} (4)
    }
\end{displaymath}
commute for all $a,a',a'' \in \mcA$. They must also be unital, i.e., for all $a\in \mcA$ the triangle
\begin{displaymath}
    \commd{
        \node (1) at (0,2) {$\unit$};
        \node (2) at (4,2) {$\mcA(a,a)$};
        \node (3) at (4,0) {$\mcB(\mcF a,\mcF a)$};
    }{
        (1) edge node[auto] {$i_a$} (2)
        (1) edge node[auto,swap] {$i_{\mcF_a}$} (3)
        (2) edge node[auto] {$\mcF_{a,a}$} (3)
    }
\end{displaymath}
commutes. Suppose $\mcG\colon \mcA \to \mcB$ is an additional $\sfV$-functor. A \emph{$\sfV$-natural transformation} $\a\colon \mcF\to \mcG$ is given by components
\begin{displaymath}
    \a_a\colon \unit \to \mcB(\mcF a, \mcG a),\quad a\in\mcA
\end{displaymath}
such that the following hexagons, which express the naturality of $\a$, commute for all $a,a'\in \mcA$:
\begin{displaymath}
    \commd{
        \node (1) at (2,0) {$\mcA(a,a')\otimes \unit$};
        \node (2) at (8,0) {$\mcB(\mcG a,\mcG a') \otimes \mcB(\mcF a,\mcG a)$};
        \node (3) at (0,2) {$\mcA(a,a')$};
        \node (4) at (10,2) {$\mcB(\mcF a, \mcG a')$};
        \node (5) at (2,4) {$\unit\otimes\mcA(a,a')$};
        \node (6) at (8,4) {$\mcB(\mcF a',\mcG a')\otimes \mcB(\mcF a,\mcF a')$};
    }{
        (3) edge node[auto] {} (1)
        (3) edge node[auto] {} (5)
        (1) edge node[auto,swap] {$\mcG_{a,a'}\otimes \a_a$} (2)
        (5) edge node[auto] {$\a_{a'} \otimes \mcF_{a,a'}$} (6)
        (2) edge node[auto,swap] {$\circ$} (4)
        (6) edge node[auto] {$\circ$} (4)
    }
\end{displaymath}

Let $\mcA$ be a $\sfV$-category. The \emph{underlying category} $\mcA_0$ of $\mcA$ is the usual category with objects $\ob \mcA$ and
\begin{displaymath}
    \mcA_0(a,a') = \sfV\big(\unit, \mcA(a,a')\big).
\end{displaymath}
The composition and units in $\mcA_0$ are induced from $\mcA$ in the obvious way. This construction defines a $2$-functor from $\sfV$-categories to categories: given a $\sfV$-functor $\mcF\colon \mcA \to \mcB$, its underlying functor $\mcF_0\colon \mcA_0 \to \mcB_0$ has the same assignment on objects, its action on maps is given by $\sfV(\unit, \mcF_{-,-})$ and the natural transformations essentially do not change.

Given $a\in \mcA$ and $v\in \sfV$ the \emph{copower} of $a$ by $v$, if it exists, is an object $v\odot a$ of $\mcA$ together with natural isomorphisms in $\sfV$
\begin{displaymath}
    \mcA(v\odot a, a') \cong \mcV\big(v, \mcA(a,a')\big)
\end{displaymath}
for all $a'\in \mcA$. Dually the \emph{power} of $a$ by $v$, if it exists, is an object $v\pitchfork a$ of $\mcA$ together with natural isomorphisms in $\sfV$
\begin{displaymath}
    \mcA(a', v \pitchfork a) \cong \mcV\big(v, \mcA(a',a)\big)
\end{displaymath}
for all $a'\in \mcA$.

If all (co)powers exist we say that $\mcA$ is a \emph{(co)powered} $\sfV$-category.

\begin{ex}
    One sees easily from the definition that all copowers and powers exist in $\mcV$. Indeed, one has the equalities for $x,y \in \mcV$
    \begin{displaymath}
        x\odot y = x\otimes y \quad \text{and} \quad x\pitchfork y = \mcV(x,y);
    \end{displaymath}
    the defining isomorphisms for (co)powers express the adjunction between $\otimes$ and $\mcV(-,-)$.
\end{ex}

We will not require much technology concerning powers and copowers. However, we will need the following, rather standard, lemma.

\begin{lem} \label{lem_power_pres}
    Let $\mcA$ and $\mcB$ be powered $\sfV$-categories and let $\mcF\colon \mcA \to \mcB$ be a $\sfV$-functor. Given $v\in \sfV$ and $a\in \mcA$ there is a natural map
    \begin{displaymath}
        \mcF(v\pitchfork a) \to v \pitchfork \mcF a.
    \end{displaymath}
\end{lem}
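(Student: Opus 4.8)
The plan is to extract the desired morphism from the universal property of the power $v \pitchfork \mcF a$ in $\mcB$, feeding it the image under $\mcF$ of the canonical ``evaluation'' morphism attached to $v \pitchfork a$ in $\mcA$. Since $\mcF$ is not assumed to preserve powers we cannot expect an isomorphism; the direction $\mcF(v\pitchfork a) \to v \pitchfork \mcF a$ is precisely the comparison map that one always gets when mapping into a universal object. To set things up, first record the evaluation morphism: apply the defining isomorphism $\mcA(a', v\pitchfork a) \cong \mcV(v, \mcA(a',a))$ with $a' = v \pitchfork a$, chase the unit $i_{v\pitchfork a}\colon \unit \to \mcA(v\pitchfork a, v\pitchfork a)$ through it, and then use the closed-structure bijection $\sfV(\unit, \mcV(v,x)) \cong \sfV(v,x)$ (coming from $\unit \otimes v \cong v$). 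This produces a canonical morphism $\varepsilon_a \colon v \to \mcA(v\pitchfork a, a)$ in $\sfV$, the enriched counit of the power.

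Next I would form the composite in $\sfV$
\[
v \xrightarrow{\ \varepsilon_a\ } \mcA(v\pitchfork a, a) \xrightarrow{\ \mcF_{v\pitchfork a,\, a}\ } \mcB\bigl(\mcF(v\pitchfork a),\, \mcF a\bigr),
\]
using the action of the $\sfV$-functor $\mcF$ on morphism objects. Running this composite backwards through $\sfV(v,x) \cong \sfV(\unit, \mcV(v,x))$ and then through the defining isomorphism of the power $\mcB(a', v\pitchfork \mcF a) \cong \mcV(v, \mcB(a', \mcF a))$ with $a' = \mcF(v\pitchfork a)$ yields a morphism $\unit \to \mcB(\mcF(v\pitchfork a), v\pitchfork \mcF a)$, i.e.\ the sought-after morphism $\mcF(v\pitchfork a) \to v\pitchfork \mcF a$ in $\mcB_0$.

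Finally there is naturality to address: naturality in $a$, and in $v$, is a diagram chase that follows from the naturality of the two defining isomorphisms of powers, the naturality of the closed-structure bijection, and the compatibility of $\mcF$ with composition and units. I do not expect any genuine obstacle, as the whole argument is formal manipulation of universal properties; the only point that requires care is keeping every adjunction isomorphism pointing in the correct direction, since inverting any of them would misleadingly suggest an isomorphism in place of a one-way comparison map.
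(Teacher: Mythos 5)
Your construction is correct and is essentially the paper's proof: the paper chases the identity element of $\mcA(v\pitchfork a, v\pitchfork a)$ through the power isomorphism for $\mcA$, then through $\mcV(v,\mcF_{v\pitchfork a,a})$, and back through the inverse power isomorphism for $\mcB$, which is exactly your composite once you note that your explicit counit $\varepsilon_a$ and the bijection $\sfV(\unit,\mcV(v,x))\cong\sfV(v,x)$ just unwind that diagram at the level of elements. Both arguments leave the (routine) naturality check implicit.
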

\begin{proof}
    The desired morphism is given by following the identity map through the following diagram:
    \begin{displaymath}
        \commd{
            \node (1) at (0,2) {$\mcA(v \pitchfork a, v\pitchfork a)$};
            \node (2) at (8,2) {$\mcB\big(\mcF(v \pitchfork a), v \pitchfork \mcF a\big)$};
            \node (3) at (0,0) {$\mcV\big(v,\mcA(v \pitchfork a,a)\big)$};
            \node (4) at (8,0) {$\mcV\big(v,\mcB(\mcF(v \pitchfork a), \mcF a)\big)$};
        }{
            (1) edge node[auto] {$\wr$} (3)
            (4) edge node[auto] {$\wr$} (2)
            (3) edge node[auto] {$\mcV(v, \mcF_{v \pitchfork a,a})$} (4)
        }\qedhere
    \end{displaymath}
\end{proof}

Using the lemma we can make sense of the statement that the functor $\mcF$ \emph{preserves powers}, i.e., that for all $v\in \sfV$ and $a\in \mcA$ the  morphism of the lemma is an isomorphism.  Of course there is the following dual statement which we shall also use.
\begin{lem}\label{lem_copower_pres}
    Let $\mcA$ and $\mcB$ be copowered $\sfV$-categories and let $\mcF\colon \mcA \to \mcB$ be a $\sfV$-functor. Given $v\in \sfV$ and $a\in \mcA$ there is a natural map
    \begin{displaymath}
        v\odot \mcF a \to \mcF(v\odot a).
    \end{displaymath}
\end{lem}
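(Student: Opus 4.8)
The plan is to dualise the construction used in the proof of Lemma~\ref{lem_power_pres}, reading off the desired morphism as the image of an identity map under a square of natural isomorphisms and the map induced by the $\sfV$-functor $\mcF$. Concretely, I would start from the defining isomorphism for the copower $v\odot a$ in $\mcA$, namely $\mcA(v\odot a, a')\cong\mcV(v,\mcA(a,a'))$, and the corresponding defining isomorphism for $v\odot\mcF a$ in $\mcB$, namely $\mcB(v\odot\mcF a, b)\cong\mcV(v,\mcB(\mcF a,b))$. Setting $a'=a$ in the first and $b=\mcF(v\odot a)$ in the second, I would form the commuting square whose top row is the action of $\mcF$ on hom-objects, $\mcF_{a, v\odot a}\colon \mcA(a, v\odot a)\to\mcB(\mcF a,\mcF(v\odot a))$, precomposed and postcomposed by the copower isomorphisms applied to $\mcV(v,-)$; explicitly the square

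\begin{displaymath}
    \commd{
        \node (1) at (0,2) {$\mcA(v\odot a, v\odot a)$};
        \node (2) at (8,2) {$\mcV\big(v,\mcA(a, v\odot a)\big)$};
        \node (3) at (0,0) {$\mcB\big(v\odot\mcF a,\mcF(v\odot a)\big)$};
        \node (4) at (8,0) {$\mcV\big(v,\mcB(\mcF a,\mcF(v\odot a))\big)$};
    }{
        (1) edge node[auto] {$\wr$} (2)
        (3) edge node[auto,swap] {$\wr$} (4)
        (2) edge node[auto] {$\mcV(v,\mcF_{a, v\odot a})$} (4)
    }
\end{displaymath}

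Then I would chase the identity $\id_{v\odot a}\in\mcA_0(v\odot a, v\odot a)$, i.e. a map $\unit\to\mcA(v\odot a, v\odot a)$: its image across the top and down the right-hand side lands in $\mcV(v,\mcB(\mcF a,\mcF(v\odot a)))$, hence by the left vertical isomorphism corresponds to an element of $\mcB_0(v\odot\mcF a,\mcF(v\odot a))$, which is precisely the asserted natural map $v\odot\mcF a\to\mcF(v\odot a)$.

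Naturality in $a$ (and, if desired, in $v$) would follow from the $\sfV$-naturality of the copower isomorphisms together with the functoriality axioms for $\mcF$ on hom-objects; this is the only point requiring a diagram chase, and it is entirely formal. The one genuine subtlety — the ``main obstacle'' such as it is — is bookkeeping the variance: the copower is a left adjoint–type construction, so unlike in Lemma~\ref{lem_power_pres} the induced map goes \emph{into} $\mcF$ of the copower rather than out of it, and one must be careful that the square above is the correct one (in particular that $\mcF_{a, v\odot a}$, not $\mcF_{v\odot a, a}$, is the arrow being hit). Once the square is set up correctly the argument is a verbatim dualisation of the previous proof, so I would simply remark that it is dual and, if space permits, record the defining square as above.
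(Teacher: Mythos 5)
Your construction is correct and is exactly the paper's approach: the paper proves the power case (Lemma~\ref{lem_power_pres}) by chasing the identity of $v\pitchfork a$ through the defining isomorphisms and $\mcV(v,\mcF_{v\pitchfork a,a})$, and simply records the copower statement as the dual, which is precisely the dualisation you carry out (with the variance handled correctly via $\mcF_{a,v\odot a}$). No gaps worth noting.
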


\section{Enriched Brown representability}
This section is devoted to the first of our main results, namely that Brown representability holds in the enriched setting. Let us begin by introducing the players and formulating what we mean by enriched Brown representability.

\begin{setup} \label{setup}
    Let $\sfS$ be a compactly generated tensor triangulated category, i.e., $\sfS$ is a compactly generated triangulated category with a closed symmetric monoidal structure $(\otimes, \mcS(-,-), \unit)$ such that $\otimes$ is exact in both variables. We moreover assume that the internal hom $\mcS(-,-)$ is exact in both variables. We assume throughout that the compact objects of $\sfS$ form a tensor subcategory, i.e., the unit $\unit$ is compact and the tensor product of two compacts is compact. Following our earlier conventions we denote by $\mcS$ the self-enrichment of $\sfS$.
		
		We fix an $\sfS$-category $\mcT$ with copowers (i.e., $\mcT^\op$ has powers) such that the underlying category, denoted $\sfT$, carries the structure of a compactly generated triangulated category (which is also fixed throughout this section). We will assume that the triangulated structure of $\sfT$ is compatible with the $\sfS$-enrichment in the sense that the functors
    \begin{displaymath}
        \mcT(t,-)_0 \quad \text{and} \quad \mcT(-,t)_0
    \end{displaymath}
    underlying the $\sfS$-functors corepresented and represented by $t\in \mcT$, are exact for all $t$. We also require that for a compact object $c\in \sfS$ and a compact object $t\in \sfT$ the copower $c\odot t$ is again compact in $\sfT$.
\end{setup}

\begin{rem}
    One source, at least morally, of such $\mcT$ is the theory of actions of compactly generated tensor triangulated categories. Given $\sfS$ as above with a sufficiently nice action, in the sense of \cite{StevensonActions}, on a compactly generated triangulated category $\sfT$ one obtains an $\sfS$-category $\mcT$ whose underlying category is canonically identified with $\sfT$. Further details concerning this intuition can be found in \cite{KellyJanelidze} and also in Section~\ref{sec_EW} (see in particular Lemma~\ref{lem:action-enr} and Proposition~\ref{prop:action-enr-fun}).
\end{rem}

\begin{defn}
    We say that $\mcT$ satisfies \emph{enriched Brown representability} if every power preserving functor $\sfS$-functor $\mcF\colon \mcT^\op \to \mcS$, such that the underlying functor $\sfF = \mcF_0$ is exact and preserves products, is isomorphic to a representable $\sfS$-functor.
\end{defn}

\begin{rem}
    Since products in $\sfT^\op$ are precisely the coproducts in $\sfT$, the phrase ``$\sfF$ preserves products'' means that it sends coproducts to products, which is precisely the assumption in the usual Brown representability theorem. Similarly, the power preservation hypothesis can be unwound as saying $\mcF$ sends copowers to powers.
\end{rem}

We shall prove that, given $\sfS$ and $\mcT$ as in Setup \ref{setup}, the category $\mcT$ satisfies enriched Brown representability. Our argument parallels Neeman's proof \cite[Theorem~3.1]{NeeGrot} of the usual Brown representability theorem for compactly generated triangulated categories. The only real adaptation required is to avoid using morphisms in $\mcT$ (as there is not necessarily such a notion), and this is fairly standard. The most important observation is, in some sense, that the correct condition for enriched representability is not just that $\sfF$ should commute with products but that $\mcF$ also should preserve powers; this is not visible when the tensor unit generates $\sfS$, but is crucial for our method of extending from the case that $\sfS$ is generated by $\unit$ to the general case.

Let $\sfS$ and $\mcT$ be as in Setup~\ref{setup} and fix an $\sfS$-functor $\mcF\colon \mcT^\op \to \mcS$ whose underlying functor we denote by $\sfF$. As indicated above we shall assume that $\mcF$ preserves powers and that $\sfF$ is exact and commutes with products. As in Neeman's proof we begin by constructing a tower of objects in $\mcT$ whose corresponding representable functors approximate $\mcF$.

Let $G$ be a suspension closed compact generating set for $\sfT$, for example one could take $G$ to be a skeleton for the compacts $\sfT^c$. Set
\begin{displaymath}
    U_0 = \{(g,f) \; \vert \; g\in G, \; f\colon \unit \to \mcF g\}
\end{displaymath}
and form the corresponding coproduct
\begin{displaymath}
    X_0 = \coprod_{(g,f)\in U_0} g
\end{displaymath}
in $\mcT$. By the weak form of the enriched Yoneda lemma we have isomorphisms
\begin{align*}
    \Hom\big(\mcT(-,X_0), \mcF\big) &\cong \sfS(\unit, \mcF X_0) \\
        &\cong \sfS\big(\unit, \mcF(\coprod_{(g,f)\in U_0} g)\big) \\
        &\cong \sfS(\unit, \prod_{(g,f)\in U_0} \mcF g) \\
        &\cong \prod_{(g,f)\in U_0} \sfS(\unit, \mcF g)
\end{align*}
and so
\begin{displaymath}
    \prod_{(g,f)\in U_0} f \in \prod_{(g,f)\in U_0} \sfS(\unit, \mcF g)
\end{displaymath}
gives a canonical enriched natural transformation $\phi^0 \colon \mcT(-, X_0) \to \mcF$.

We now assume, inductively, that we have constructed objects $X_i \in \mcT$ together with morphisms
\begin{displaymath}
    \psi^i \in \sfT(X_{i-1}, X_i) \quad \text{and} \quad \phi^i \in \Hom\big(\mcT(-,X_i), \mcF\big)
\end{displaymath}
such that the triangles
\begin{displaymath}
    \commd{
        \node (1) at (0,2) {$\mcT(-,X_{i-1})$};
        \node (2) at (3,0) {$\mcF$};
        \node (3) at (6,2) {$\mcT(-, X_i)$};
    }{
        (1) edge node[auto,swap] {$\phi^{i-1}$} (2)
        (1) edge node[auto] {$\mcT(-, \psi^i)$} (3)
        (3) edge node[auto] {$\phi^i$} (2)
    }
\end{displaymath}
commute. The object $X_{i+1}$ and maps $\phi^{i+1}$ and $\psi^{i+1}$ are constructed as follows: set
\begin{displaymath}
    U_{i+1} = \coprod_{g\in G} \ker \sfS\big(\unit, \mcT(g, X_i)\big) \xto{\sfS(\unit, \phi^i_g)} \sfS(\unit, \mcF g)
\end{displaymath}
and consider the coproduct
\begin{displaymath}
    K_{i+1} = \coprod_{(g,f) \in U_{i+1}} g,
\end{displaymath}
where $(g,f) \in U_{i+1}$ is our notation for the morphism
\begin{displaymath}
    f\in \sfS\big(\unit, \mcT(g, X_i)\big) = \sfT(g, X_i)
\end{displaymath}
occurring in $U_{i+1}$. There is a canonical morphism $K_{i+1} \to X_i$ in $\sfT$ and we complete it to a triangle
\begin{displaymath}
    K_{i+1} \to X_i \xto{\psi^{i+1}} X_{i+1} \to \S K_{i+1}
\end{displaymath}
defining $X_{i+1}$.
We now produce the morphism $\phi^{i+1}$. Applying the exact functor $\sfF$ gives a triangle in $\sfS$
\begin{displaymath}
    \prod_{(g,f)\in U_{i+1}}\mcF g \cong \mcF K_{i+1} \longleftarrow \mcF X_i \xleftarrow{\sfF\psi^{i+1}} \mcF X_{i+1} \longleftarrow \S^{-1}\mcF K_{i+1}.
\end{displaymath}
By the Yoneda lemma the map $\phi^i$ corresponds to a morphism, which we also call $\phi^i$, $\unit \to \mcF X_i$. We claim that the latter map factors via $\mcF X_{i+1}$ giving, by Yoneda, the desired natural transformation $\phi^{i+1}$.

By construction $\mcF X_i \to \mcF K_{i+1}$ corresponds to the components
\begin{displaymath}
    \unit \stackrel{f}{\to} \mcT(g, X_i) \xto{\mcF_{g, X_i}} \mcS(\mcF X_i, \mcF g).
\end{displaymath}
Applying the $\otimes$-$\mcS(-,-)$ adjunction
\begin{displaymath}
    \sfS\big(\mcT(g, X_i), \mcS(\mcF X_i, \mcF g)\big) \cong \sfS\big(\mcF X_i \otimes \mcT(g, X_i), \mcF g\big)
\end{displaymath}
to
\begin{displaymath}
    \mcF_{g, X_i}\colon \mcT(g, X_i) \to \mcS(\mcF X_i, \mcF g),
\end{displaymath}
the component at $(g,f)$ of the composite $\unit \stackrel{\phi^i}{\to} \mcF X_i \to \mcF K_{i+1}$ can be written as the composite
\begin{displaymath}
    \unit \stackrel{\sim}{\to} \unit\otimes \unit \xto{\phi^i \otimes f} \mcF X_i \otimes \mcT(g, X_i) \to \mcF g.
\end{displaymath}
On the other hand, the above composite can be identified with
\begin{displaymath}
    \unit \stackrel{f}{\to} \mcT(g, X_i) \stackrel{\phi^i_g}{\to} \mcF g,
\end{displaymath}
which is zero by construction as $f\in \ker \sfS\big(\unit, \mcT(g, X_i)\big) \xto{\sfS(\unit, \phi^i_g)} \sfS(\unit, \mcF g)$. This shows
\begin{displaymath}
    \unit \stackrel{\phi^i}{\to} \mcF X_i \to \mcF K_{i+1}
\end{displaymath}
is zero in $\sfS$ and thus $\phi^i$ can be factored via a morphism $\phi^{i+1}\colon \unit \to \mcF X_{i+1}$. Equivalently, we have a natural transformation of enriched functors, which we also denote by $\phi^{i+1}$, making the following triangle commute
\begin{displaymath}
    \commd{
        \node (1) at (0,2) {$\mcT(-,X_i)$};
        \node (2) at (3,0) {$\mcF$.};
        \node (3) at (6,2) {$\mcT(-, X_{i+1})$};
    }{
        (1) edge node[auto,swap] {$\phi^i$} (2)
        (1) edge node[auto] {$\mcT(-, \psi^{i+1})$} (3)
        (3) edge node[auto] {$\phi^{i+1}$} (2)
    }
\end{displaymath}
Indeed, this triangle commutes by construction since the triangle
\begin{displaymath}
    \commd{
        \node (1) at (0,2) {$\mcF X_i$};
        \node (2) at (3,0) {$\unit$};
        \node (3) at (6,2) {$\mcF X_{i+1}$};
    }{
        (2) edge node[auto] {$\phi^i$} (1)
        (3) edge node[auto,swap] {$\sfF\psi^{i+1}$} (1)
        (2) edge node[auto,swap] {$\phi^{i+1}$} (3)
    }
\end{displaymath}
commutes in $\sfS$.

We now define an object $X$ of $\mcT$ by
\begin{displaymath}
X = \hocolim_i X_i,
\end{displaymath}
i.e., by the triangle in $\sfT$
\begin{displaymath}
    \coprod_i X_i \xto{1-\psi^i} \coprod_i X_i \to X \to \S \coprod_i X_i.
\end{displaymath}
Applying $\sfF$ to this defining triangle yields a triangle in $\sfS$
\begin{displaymath}
    \prod_i \mcF X_i \xleftarrow{1- \sfF\psi^{i+1}} \prod_i \mcF X_i \longleftarrow \mcF X \longleftarrow \S^{-1} \prod_i \mcF X_i.
\end{displaymath}
By the compatibility conditions between the $\phi^i$ and $\psi^i$, the composite
\begin{displaymath}
    \unit \to \prod_i \mcF X_i \xto{1-\sfF\psi^i} \prod_i \mcF X_i,
\end{displaymath}
where the first morphism is induced by the $\phi^i$, vanishes and so the triangle gives us a factorization of $\unit \to \prod_i \mcF X_i$ via $\phi \colon \unit \to \mcF X$. This map $\phi$ is compatible with the $\phi^i$ and $\psi^i$ in the obvious way.

Corresponding to $\phi$ we have an enriched natural transformation
\begin{displaymath}
    \phi\colon \mcT(-,X) \to \mcF.
\end{displaymath}
We will prove that $\phi$ is an isomorphism of $\sfS$-functors, i.e., each of the components
\begin{displaymath}
    \phi_Y\colon \unit \to \mcS\big(\mcT(Y,X), \mcF Y\big)
\end{displaymath}
or, more precisely, the maps they correspond to
\begin{displaymath}
	\phi_Y\colon \mcT(Y,X) \to \mcF Y
\end{displaymath}
are isomorphisms in $\sfS$. First we observe that it is enough to check this on generators.

\begin{lem}\label{lem:brownloc}
The full subcategory
\begin{displaymath}
    \sfM = \{Y\in \sfT \; \vert\; \phi_Y \; \text{is an isomorphism}\}
\end{displaymath}
is localizing in $\sfT$. In particular, if $G\subseteq \sfM$ then $\sfM = \sfT$ and so $\phi$ is an isomorphism.
\end{lem}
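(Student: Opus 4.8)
Since a full triangulated subcategory of $\sfT$ closed under arbitrary coproducts is automatically thick, hence localizing, and since a localizing subcategory of $\sfT$ containing the compact generating set $G$ must be all of $\sfT$ (a standard consequence of compact generation), it is enough to show that $\sfM$ is a triangulated subcategory of $\sfT$ closed under coproducts. So the plan is to check, in turn: (1) $0\in\sfM$, and $\sfM$ is closed under $\S$ and $\S^{-1}$; (2) $\sfM$ is closed under cones; (3) $\sfM$ is closed under coproducts.

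Steps (1) and (2) rest on the exactness of the underlying functors $\mcT(-,X)_0$ and $\sfF=\mcF_0$ --- the first by Setup~\ref{setup}, the second by hypothesis --- together with the fact that $\phi$ commutes with their suspension isomorphisms (a consequence of the $\sfS$-naturality of $\phi$ and the compatibility of the triangulation of $\sfT$ with the enrichment). For (1): exactness gives $\mcT(0,X)=0=\mcF 0$, so $\phi_0$ is the invertible map $0\to 0$, and suspension-compatibility makes $\phi_{\S^{\pm1}Y}$ invertible whenever $\phi_Y$ is. For (2): given a distinguished triangle $Y'\to Y\to Y''\to\S Y'$ in $\sfT$ with $Y',Y''\in\sfM$, apply the exact functors $\mcT(-,X)_0$ and $\sfF$ to obtain two distinguished triangles in $\sfS$; the components of $\phi$ assemble these into a morphism of triangles (the inner squares are naturality of $\phi$, the connecting square uses suspension-compatibility), and since $\phi$ is invertible at $Y'$ and $Y''$ the standard fact that a morphism of distinguished triangles invertible at two vertices is invertible at the third yields $\phi_Y$ invertible, i.e.\ $Y\in\sfM$.

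For (3), let $\{Y_i\}_{i\in I}$ lie in $\sfM$ and let $Y=\coprod_iY_i$ be their coproduct in the underlying category $\sfT=\mcT_0$. By hypothesis $\sfF$ sends coproducts to products, so $\mcF Y\cong\prod_i\mcF Y_i$. On the other side, because $\mcT$ is copowered the coproduct $Y$ in $\mcT_0$ underlies a conical coproduct in $\mcT$ (see \cite{Kelly}), so the enriched-representable $\sfS$-functor $\mcT(-,X)$ carries it to the corresponding product, giving $\mcT(Y,X)\cong\prod_i\mcT(Y_i,X)$ in $\sfS$. These identifications are natural in the coproduct inclusions, so $\phi$ identifies $\phi_Y$ with $\prod_i\phi_{Y_i}$, which is invertible; hence $Y\in\sfM$, and $\sfM$ is localizing. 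Beyond this the argument is a routine transcription of Neeman's d\'evissage. The two points that need care --- neither of them deep --- are, in (3), upgrading the evident identity $\sfT(\coprod_iY_i,X)=\prod_i\sfT(Y_i,X)$ of underlying $\Hom$-sets to the isomorphism in $\sfS$ itself (which is exactly where copoweredness of $\mcT$ is used, and which is what makes the proof go through when $\unit$ does not generate $\sfS$), and, in (1)--(2), making the compatibility of $\phi$ with the triangulated structure fully explicit; I expect the latter to be the fussier of the two to write down.
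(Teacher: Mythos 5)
Your overall architecture is exactly the paper's: the proof given there consists of observing that the underlying transformation of $\phi$ is a natural transformation between the exact functors $\mcT(-,X)_0$ and $\sfF$, both of which send coproducts in $\sfT$ to products in $\sfS$, then invoking the usual d\'evissage (suspensions, products of isomorphisms, two-out-of-three for morphisms of triangles) and the standard fact that a localizing subcategory containing the compact generating set $G$ is all of $\sfT$. Your steps (1)--(3) are a fleshed-out transcription of this, and steps (1) and (2) are fine as sketched.

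The one substantive problem is the justification in step (3) of the isomorphism $\mcT(\coprod_i Y_i,X)\cong\prod_i\mcT(Y_i,X)$ in $\sfS$: the fact from \cite{Kelly} you invoke is dualized the wrong way. Copowers on $\mcT$ (equivalently, powers on $\mcT^\op$) make ordinary \emph{limits} of $\sfT=\mcT_0$ conical --- one tests with $\sfS(s,-)$ and uses $\sfS\big(s,\mcT(t,y)\big)\cong\sfT(s\odot t,y)$, which preserves limits in $y$ --- so what copoweredness gives is that $\mcT(t,-)_0$ sends products to products. It is \emph{powers} on $\mcT$ that would make ordinary coproducts conical. With only copowers, the same test shows $\sfS\big(s,\mcT(\coprod_iY_i,X)\big)\cong\sfT\big(s\odot\coprod_iY_i,X\big)$ while $\sfS\big(s,\prod_i\mcT(Y_i,X)\big)\cong\sfT\big(\coprod_i(s\odot Y_i),X\big)$, so conicality of the coproduct is \emph{equivalent} to the statement that $s\odot-$ preserves coproducts in $\sfT$ --- which is not among the hypotheses of Setup~\ref{setup} and does not follow from the tensoredness result you cite. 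To be fair, the paper's own proof simply asserts that $\mcT(-,X)_0$ sends coproducts to products; this is automatic in the motivating situation of Lemma~\ref{lem:action-enr}, where copowers are given by the action and $s\odot-=s\ast-$ preserves coproducts by assumption. So your step (3) needs either that hypothesis made explicit (coproduct preservation of copowering, or powers on $\mcT$) or a different argument; the appeal to copoweredness alone does not close it.
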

\begin{proof}
The underlying natural transformation of $\phi$, whose components are just the $\phi_Y$, is a natural transformation between the exact product preserving functors $\sfT(-,X)$ and $\sfF$ (recall product preservation here means sending coproducts to products). The usual argument shows $\sfM$ is localizing: the suspension of an isomorphism is an isomorphism, as is any product of isomorphisms, and any completion of two isomorphisms to a morphism of triangles.

The final statement is then clear, as any localizing subcategory of $\sfT$ containing $G$ must be $\sfT$ itself. By definition of $\sfM$ this says that $\phi_Y$ is an isomorphism for all $Y\in \sfT$, i.e., $\phi$ is a natural isomorphism.
\end{proof}

Our strategy to check that the $\phi_g$ are isomorphisms for $g\in G$ is as follows. We can complete $\phi_g$ to a triangle in $\sfS$
\begin{displaymath}
    \mcT(g, X) \stackrel{\phi_g}{\to} \mcF g \to Z_g \to \S \mcT(g,X)
\end{displaymath}
and it is sufficient to show $Z_g \cong 0$. The first step in proving that $Z_g$ vanishes is the following lemma.

\begin{lem}\label{lem:nomaps}
There are no morphisms from $\unit$ to $\S^i Z_g$ for any $i\in \Z$, i.e.,
\begin{displaymath}
    \sfS(\unit, \S^i Z_g) = 0 \quad \forall \; i\in \Z.
\end{displaymath}
\end{lem}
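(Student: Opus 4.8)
The plan is to exploit the fact that $Z_g$ sits in a triangle $\mcT(g,X) \xto{\phi_g} \mcF g \to Z_g \to \S\mcT(g,X)$, so that $\sfS(\unit, \S^i Z_g)$ is controlled by the long exact sequence obtained by applying $\sfS(\unit, \S^i(-))$. Since $\sfS$ is compactly generated by (suspensions of) its compacts and, more to the point here, $\sfT$ is compactly generated by $G$, the key identification is $\sfS(\unit, \S^i \mcT(g,X)) \cong \sfS(\unit, \mcT(g, \S^i X)) \cong \sfT(g, \S^i X)$ using that $\mcT(g,-)_0$ is exact (Setup~\ref{setup}); likewise $\sfS(\unit, \S^i \mcF g) \cong \sfS(\unit, \mcF(\S^i g))$ since $\mcF$ is an $\sfS$-functor whose underlying functor commutes with suspension, and one can even rename $\S^i g$ as another element of $G$ (as $G$ is suspension-closed). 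Thus it suffices to show that for every $g\in G$ the map
\begin{displaymath}
    \sfS(\unit, \phi_g)\colon \sfT(g, X) \to \sfS(\unit, \mcF g)
\end{displaymath}
is an isomorphism of abelian groups.

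First I would prove surjectivity. By the weak enriched Yoneda lemma, $\sfS(\unit, \mcF g)$ is a quotient (in fact a direct summand indexed by $(g,f)\in U_0$) of $\Hom(\mcT(-,X_0),\mcF)$, and the very construction of $X_0$ and $\phi^0$ was designed so that every $f\colon \unit \to \mcF g$ is hit by the composite $\unit \xto{\iota_{(g,f)}} \mcT(g, X_0) \xto{\phi^0_g} \mcF g$, where $\iota_{(g,f)}$ is the structure map of $g$ as a summand of the coproduct $X_0 = \coprod_{(g,f)\in U_0} g$. Postcomposing with $\mcT(g,\psi^{1})\cdots$ and passing to the homotopy colimit, this element of $\sfT(g, X_0)$ maps to an element of $\sfT(g,X)$ whose image under $\sfS(\unit,\phi_g)$ is $f$, using the compatibility of $\phi$ with $\phi^0$ and $\psi^i$. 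Hence $\sfS(\unit,\phi_g)$ is surjective.

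Next, injectivity. Suppose $f\in \sfT(g,X) = \sfS(\unit, \mcT(g,X))$ maps to zero under $\sfS(\unit,\phi_g)$. Because $g$ is compact in $\sfT$ and $X = \hocolim_i X_i$, the map $f$ factors through some $X_i$, say $f = (\text{structure map } X_i\to X)\circ \bar f$ with $\bar f\in \sfT(g, X_i) = \sfS(\unit, \mcT(g,X_i))$. Since $\phi_g$ restricted to level $i$ is $\phi^i_g$ up to the compatibility, $\bar f$ lies in $\ker\big(\sfS(\unit,\phi^i_g)\big)$, so by construction $\bar f$ is one of the generators $(g,\bar f)\in U_{i+1}$. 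But then $\bar f$ becomes null after composing with $K_{i+1}\to X_i \xto{\psi^{i+1}} X_{i+1}$: indeed $\bar f$ factors through $K_{i+1}\to X_i$ (it is literally a summand inclusion composed with the canonical map), and $K_{i+1}\to X_i \to X_{i+1}$ is zero by the defining triangle of $X_{i+1}$. Therefore the image of $\bar f$ in $\sfT(g, X_{i+1})$, and a fortiori in $\sfT(g,X)$, vanishes, i.e.\ $f = 0$. Combining with surjectivity gives that $\sfS(\unit,\phi_g)$ is an isomorphism, hence $\sfS(\unit, \S^i Z_g) = 0$ for all $i\in\Z$ by the long exact sequence.

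The main obstacle I anticipate is the careful bookkeeping in the injectivity step: one must track the factorization of a map out of a compact object through a finite stage of the homotopy colimit and then match that factorization against the precise indexing set $U_{i+1}$ and the defining triangle $K_{i+1}\to X_i\to X_{i+1}$, verifying that the element one produced is genuinely killed. The compactness of $g$ is used twice (to get the factorization through some $X_i$, and implicitly in the Yoneda/exactness identifications), and it is exactly here that the hypothesis that $G$ is a \emph{compact} generating set is indispensable; the suspension-closedness of $G$ is what lets us reduce the statement for all $i\in\Z$ to the case $i=0$ cleanly.
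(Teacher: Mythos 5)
Your proposal is correct and is essentially the paper's own argument: reduce to the case $i=0$ using suspension-closure of $G$ and exactness, get surjectivity of $\sfT(g,X)\to\sfS(\unit,\mcF g)$ from the construction of $X_0$ and $\phi^0$, and get injectivity by factoring a kernel element through some $X_i$ via compactness of $g$ (the paper phrases this as $\sfT(g,X)\cong\colim_i\sfT(g,X_i)$) and then killing it using $U_{i+1}$ and the triangle $K_{i+1}\to X_i\to X_{i+1}$. The only slip is in the bookkeeping of the reduction step: since $\mcF$ and $\mcT(-,X)$ are contravariant, one should identify $\S^i\mcT(g,X)\cong\mcT(\S^{-i}g,X)$ and $\S^i\mcF g\cong\mcF(\S^{-i}g)$ (same replacement $g\mapsto\S^{-i}g\in G$ on both sides), rather than moving the suspension onto $X$ in one term and onto $g$ with the opposite sign in the other; this is harmless because $G$ is closed under suspensions.
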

\begin{proof}
Applying $\sfS(\unit, -)$ to the triangle defining $Z_g$ gives a commutative diagram
\begin{displaymath}
    \commd{
        \node (1) at (0,2) {$\sfS(\unit, \S^{i-1} Z_g)$};
        \node (2) at (4,2) {$\sfS\big(\unit, \S^i\mcT(g, X)\big)$};
        \node (3) at (8,2) {$\sfS(\unit, \S^i\mcF g)$};
        \node (4) at (12,2) {$\sfS(\unit, \S^{i}Z_g)$};
        \node (5) at (4,0) {$\sfS\big(\unit, \mcT(\S^{-i}g, X)\big)$};
        \node (6) at (8,0) {$\sfS\big(\unit, \mcF(\S^{-i}g)\big)$};
    }{
        (1) edge node[auto] {} (2)
        (2) edge node[auto] {} (3)
        (3) edge node[auto] {} (4)
        (2) edge node[auto] {$\wr$} (5)
        (3) edge node[auto] {$\wr$} (6)
        (5) edge node[auto] {} (6)
    }
\end{displaymath}
where the top row is exact and $\S^{-i}g\in G$ by the assumption that $G$ is suspension closed. Suspension closure of $G$ together with the above diagram means it is sufficient to consider the case $i=0$, i.e., show that
\begin{displaymath}
    \sfS\big(\unit, \mcT(g,X)\big) \to \sfS(\unit,\mcF g)
\end{displaymath}
is an isomorphism.  We now use the identifications
\begin{align*}
    \sfS(\unit, \mcT(g, X))
        &= \sfT(g, X) \\
        &= \sfT(g, \hocolim_i X_i) \\
        &\cong \colim_i \sfT(g, X_i).
\end{align*}
By construction $\sfT(g, X_0) \to \sfS(\unit, \mcF g)$ is surjective and fits into the commutative triangle
\begin{displaymath}
    \commd{
        \node (1) at (0,2) {$\sfT(g, X_0)$};
        \node (2) at (4,2) {$\sfS(\unit, \mcF g)$};
        \node (3) at (2,0) {$\sfT(g, X)$};
    }{
        (1) edge node[auto] {} (2)
        (1) edge node[auto] {} (3)
        (3) edge node[auto] {} (2)
    }
\end{displaymath}
showing that $\sfT(g, X) \to \sfS(\unit, \mcF g)$ is also surjective.

Now we prove that it is also injective. Suppose we are given
\begin{displaymath}
    f\in \ker \big(\colim_i \sfT(g, X_i) \to \sfS(\unit, \mcF g)\big).
\end{displaymath}
It can be represented by some $f_i \in \sfT(g, X_i)$ which is then necessarily in the kernel of the composite
\begin{displaymath}
    \sfS\big(\unit, \mcT(g, X_i)\big) = \sfT(g, X_i) \to \colim_i \sfT(g, X)\to \sfS(\unit, \mcF g).
\end{displaymath}
Hence $f_i$ is an element of the set $U_{i+1}$ which we used in defining $X_{i+1}$. Commutativity of
\begin{displaymath}
    \commd{
        \node (1) at (0,2) {$\sfT(g,X_i)$};
        \node (2) at (3,0) {$\sfT(g,X)$};
        \node (3) at (6,2) {$\mcT(g,X_{i+1})$};
    }{
        (1) edge node[auto] {} (2)
        (1) edge node[auto] {$\sfT(g, \psi^{i+1})$} (3)
        (3) edge node[auto] {} (2)
    }
\end{displaymath}
then implies, by the way $\psi^{i+1}$ was defined, that the image of $f_i$ in $\sfT(g, X)$, which is none other than $f$, is zero. Thus the map $\sfT(g,X) \to \sfS(\unit, \mcF g)$ is injective.

So we have proved that $\sfT(g,X) \to \sfS(\unit, \mcF g)$ is an isomorphism for any $g\in G$. The exact sequence considered at the beginning of the proof then forces $\sfS(\unit, Z_g)$ to be zero as claimed.
\end{proof}

\begin{rem}
This is already enough to prove representability in the case $\sfS$ is generated by $\unit$.
\end{rem}

\begin{thm}\label{thm_brown}
    Let $\sfS$ be a compactly generated tensor triangulated category whose compact objects are a tensor subcategory and denote by $\mcS$ the self-enrichment of $\sfS$. Let $\mcT$ be a copowered $\sfS$-category whose underlying category $\sfT$ carries a fixed structure of compactly generated triangulated category. Finally, suppose the operation of copowering by an object of $\sfS^c$ sends compacts to compacts in $\sfT$. Then any power preserving $\sfS$-functor $\mcF \colon \mcT^\op\to \mcS$ whose underlying functor is exact and preserves products is representable, i.e., there is an $X\in \mcT$ with
    \begin{displaymath}
        \mcT(-, X) \cong \mcF.
    \end{displaymath}
\end{thm}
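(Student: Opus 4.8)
The plan is to resume exactly where the discussion preceding the statement stops: we have $X=\hocolim_i X_i$ together with the enriched natural transformation $\phi\colon\mcT(-,X)\to\mcF$, and by Lemma~\ref{lem:brownloc} it suffices to show that the component $\phi_g$ is an isomorphism for $g$ running over a compact generating set of $\sfT$. I would use the freedom in the choice of $G$ to take it to be a skeleton of the subcategory $\sfT^c$ of \emph{all} compact objects. Since $\phi$ is natural, the conclusion of Lemma~\ref{lem:nomaps} is unaffected by replacing $g$ with an isomorphic object, so it then reads: for every compact $g\in\sfT$, writing $Z_g=\cone(\phi_g)$, one has $\sfS(\unit,\S^iZ_g)=0$ for all $i\in\Z$. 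When $\unit$ generates $\sfS$ this already forces $Z_g=0$; in general the remaining task is to upgrade this vanishing from $\unit$ to an arbitrary compact generator of $\sfS$, and that is where the power-preservation hypothesis is used.

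Fix compact objects $c\in\sfS$ and $g\in\sfT$. By the hypothesis on copowers, $c\odot g$ is again compact in $\sfT$, so the previous paragraph applies to it. The key claim is that $\cone(\phi_{c\odot g})\cong\mcS(c,Z_g)$. The source of $\phi_{c\odot g}$ is canonically $\mcT(c\odot g,X)\cong\mcS\big(c,\mcT(g,X)\big)$, this being the defining isomorphism of the copower; its target is canonically $\mcF(c\odot g)\cong\mcS(c,\mcF g)$, since the power of $g$ in $\mcT^\op$ by $c$ is precisely the copower $c\odot g$, the power of $\mcF g$ in $\mcS$ by $c$ is $\mcS(c,\mcF g)$, and $\mcF$ preserves powers. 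Under these two identifications $\phi_{c\odot g}$ corresponds to $\mcS(c,\phi_g)$ by $\sfS$-naturality of $\phi$; applying the exact functor $\mcS(c,-)$ to the triangle $\mcT(g,X)\xrightarrow{\phi_g}\mcF g\to Z_g\to\S\mcT(g,X)$ then identifies $\cone(\phi_{c\odot g})$ with $\mcS(c,Z_g)$.

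Putting the two steps together, Lemma~\ref{lem:nomaps} applied to the compact object $c\odot g$ gives, for every $i\in\Z$, that $\sfS(c,\S^iZ_g)\cong\sfS\big(\unit,\mcS(c,\S^iZ_g)\big)\cong\sfS\big(\unit,\S^i\mcS(c,Z_g)\big)\cong\sfS\big(\unit,\S^i\cone(\phi_{c\odot g})\big)=0$, where the first two isomorphisms use the $\otimes$-$\mcS(-,-)$ adjunction together with $\unit\otimes c\cong c$ and the exactness of the internal hom. As $c$ ranges over all compact objects of $\sfS$, this says that $Z_g$ is right orthogonal to every compact object of $\sfS$, whence $Z_g=0$ because $\sfS^c$ generates $\sfS$; thus $\phi_g$ is an isomorphism. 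This holds for every compact $g\in\sfT$, so $\sfM=\sfT$ by Lemma~\ref{lem:brownloc} and $\phi\colon\mcT(-,X)\to\mcF$ exhibits $\mcF$ as representable.

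The one point I expect to need real care is the identification $\phi_{c\odot g}\cong\mcS(c,\phi_g)$. Heuristically it is automatic: an $\sfS$-natural transformation between power-preserving $\sfS$-functors is compatible with the power-preservation isomorphisms, which are themselves natural in the object. Making this precise means unwinding the copower unit $c\to\mcT(g,c\odot g)$ together with the canonical map of Lemma~\ref{lem_power_pres} and verifying a single compatibility square; everything else in the argument is formal given what has already been established.
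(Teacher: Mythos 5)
Your proposal is correct and follows essentially the same route as the paper: enlarge $G$ so that Lemma~\ref{lem:nomaps} applies to $c\odot g$ for compact $c\in\sfS^c$, identify $\phi_{c\odot g}$ with $\mcS(c,\phi_g)$ via the copower adjunction, power preservation of $\mcF$, and naturality of $\phi$, and then invoke compact generation of $\sfS$. Your phrasing in terms of $\cone(\phi_{c\odot g})\cong\mcS(c,Z_g)$ is just a cosmetic variant of the paper's commutative diagram of hom-sets, and the compatibility square you flag is exactly the point the paper dispatches with ``commutes by naturality''.
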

\begin{proof}
    We shall prove that the map $\phi\colon \mcT(-, X)\to \mcF$ which we constructed earlier is an isomorphism.  By Lemma~\ref{lem:brownloc} it is enough to check this on our compact generating set $G$ for $\sfT$. Let $c$ be a compact object of $\sfS$, and let $g\in G$.  Both $\mcF$ and $\mcT(-,X)$ preserve powers so we get a diagram
    \begin{displaymath}
        \commd{
            \node (1) at (0,4) {$\sfS\big(c, \mcT(g, X)\big)$};
            \node (2) at (5,4) {$\sfS(c, \mcF g)$};
            \node (3) at (0,2) {$\sfS\big(\unit, \mcS\big(c, \mcT(g, X)\big)\big)$};
            \node (4) at (5,2) {$\sfS\big(\unit, \mcS(c, \mcF g)\big)$};
            \node (5) at (0,0) {$\sfS\big(\unit, \mcT(c\odot g, X)\big)$};
            \node (6) at (5,0) {$\sfS\big(\unit, \mcF(c\odot g)\big)$};
        }{
            (1) edge node[auto] {} (2)
            (3) edge node[auto] {} (4)
            (5) edge node[auto] {} (6)
            (1) edge node[auto] {$\wr$} (3)
            (3) edge node[auto] {$\wr$} (5)
            (2) edge node[auto] {$\wr$} (4)
            (4) edge node[auto] {$\wr$} (6)
}
\end{displaymath}
which commutes by naturality. Without loss of generality we may assume our generating set $G$ is closed under copowering with objects of $\sfS^c$. Thus Lemma~\ref{lem:nomaps} applies to show, by considering the triangle in $\sfS$
\begin{displaymath}
    \mcT(c\odot g, X) \to \mcF(c\odot g) \to Z_{c\odot g} \to \S\mcT(c\odot g, X),
\end{displaymath}
that the bottom morphism in the above diagram is an isomorphism. Hence for any compact object $c\in \sfS^c$
\begin{displaymath}
    \sfS\big(c, \mcT(g,X)\big)\to \sfS(c, \mcF g)
\end{displaymath}
is an isomorphism. Compact generation of $\sfS$ then implies $\phi_g\colon \mcT(g,X) \to \mcF g$ is an isomorphism. Since $g\in G$ was arbitrary this completes the proof.
\end{proof}

\section{Triangulated module categories}\label{sec_EW}
We now turn to the question of representing covariant functors via bimodule objects. This is a more delicate question as, in general, an abstract triangulated category is not some subcategory of objects with extra structure in a ``universal ambient'' triangulated category where such bimodules could exist. However, we are still able to prove representability results for functors between certain triangulated categories. Let us begin by fixing the setup and some conventions.

Throughout $\sfS$ will denote a compactly generated tensor triangulated category and $\mcS$ will denote $\sfS$ considered as a category enriched in itself. We will always make the assumption that $\sfS$ is generated by the tensor unit, i.e.,
\begin{displaymath}
    \sfS = \langle \unit \rangle.
\end{displaymath}
Finally, we will assume that $\sfS$ is $\infty$-triangulated in the sense that one has higher octahedra and the corresponding compatibility axioms for them. Let us allay any potential worry this last sentence could have caused by pointing out right away that we shall not explicitly deal with this higher structure. It is a technical assumption required in the work of Balmer \cite{Balmer_sep} which forms the basis for our results. The reader who desires further details should consult the work of K\"{u}nzer \cite{kunzer} and Maltsiniotis \cite{maltsiniotis}; a compact presentation of the axioms can also be found in \cite{Balmer_sep}.

As $\sfS$ is monoidal one can consider monoid objects in $\sfS$.  We briefly recall that a monoid consist of an object, say, $A$, a multiplication $\mu\colon A\tensor A\to A$ and a unit $\eta\colon \unit \to A$ subject to the usual associativity and unitality diagrams.

Given such a monoid $A$, we define the category of \emph{right $A$-modules in $\sfS$}, denoted by $\Modcat_\sfS A$, to have as objects pairs $(x,\rho)$, where $\rho\colon x\tensor A\to x$ is compatible with the monoid structure in the natural way, i.e., the following two diagrams commute:
\begin{equation}
\begin{split} \label{eq:module}
\commd{
    \node (1) at (0,2) {$x\tensor A\tensor A$};
    \node (2) at (4,2) {$x\tensor A$};
    \node (3) at (0,0) {$x\tensor A$};
    \node (4) at (4,0) {$x$};
    \node (5) at (8,2) {$x\tensor\unit$};
    \node (6) at (12,2) {$x\tensor A$};
    \node (7) at (12,0) {$x$};
}{
    (1) edge node[auto] {$\rho\tensor1$} (2)
    (1) edge node[auto] {$1\tensor\mu$} (3)
    (2) edge node[auto] {$\rho$} (4)
    (3) edge node[auto] {$\rho$} (4)
    (5) edge node[auto] {$1\tensor\eta$} (6)
    (6) edge node[auto] {$\rho$} (7)
    (5) edge node[auto,swap] {$\iso$} (7)
}
\end{split}
\end{equation}
A morphism $f\colon (x,\rho_x) \to (y,\rho_y)$ is merely an $A$-linear morphism, i.e., a morphism $f\colon x\to y$ such that
\begin{displaymath}
\commd{
    \node (1) at (0,2) {$x\tensor A$};
    \node (2) at (4,2) {$y\tensor A$};
    \node (3) at (0,0) {$x$};
    \node (4) at (4,0) {$y$};
}{
    (1) edge node[auto] {$f\tensor1$} (2)
    (1) edge node[auto] {$\rho_x$} (3)
    (2) edge node[auto] {$\rho_y$} (4)
    (3) edge node[auto] {$f$} (4)
}
\end{displaymath}
commutes.  Note in particular that $(A,\mu)$ is a right module.  One defines the category of left $A$-modules and, given another monoid $B$, the category of $A$-$B$-bimodules similarly.

As $\sfS$ is \emph{symmetric} monoidal, there are isomorphisms $c_{x,y}\colon x\tensor y \stackrel\sim\to y\tensor x$, natural in both $x$ and $y$.  Thus any monoid $A$ admits another product, namely $\mu\circ c_{A,A}$.  We shall denote this opposite monoid by $A^\op$.  This allows us to view a left $A$-module in $\sfS$ as an object of $\Modcat_\sfS A^\op$ and an $A$-$B$-bimodule as an object of $\Modcat_\sfS A^\op\tensor B$.

\begin{defn}
    A monoid $A$ with multiplication $\mu$ is \emph{separable} if the multiplication map $\mu$ admits a bimodule section, i.e., a morphism $\sigma\colon A\to A\tensor A$ such that $\mu\sigma = 1$ and the following diagram commutes:
    \begin{displaymath}
    \commd{
        \node (1) at (4,4) {$A\tensor A$};
        \node (2) at (0,2) {$A\tensor A\tensor A$};
        \node (3) at (4,2) {$A$};
        \node (4) at (8,2) {$A\tensor A\tensor A$};
        \node (5) at (4,0) {$A\tensor A$};
    }{
        (1) edge node[auto,swap] {$\sigma\tensor1$} (2)
        (1) edge node[auto] {$\mu$} (3)
        (1) edge node[auto] {$1\tensor\sigma$} (4)
        (2) edge node[auto,swap] {$1\tensor\mu$} (5)
        (3) edge node[auto] {$\sigma$} (5)
        (4) edge node[auto] {$\mu\tensor1$} (5)
    }
    \end{displaymath}
\end{defn}
\begin{rem}
    A monoid $A$ gives rise to the extension of scalars functor
    \begin{displaymath}
        F_A = -\tensor A\colon \sfS \to \Modcat_\sfS A,
    \end{displaymath}
    which admits as a right adjoint the forgetful functor $U_A\colon \Modcat_\sfS A \to \sfS$.  By \cite[Prop.~3.11]{Balmer_sep} a monoid is separable if and only if $U_A$ is separable as a functor, i.e., the counit $\varepsilon_A\colon F_AU_A \to \Ident_{\Modcat_\sfS A}$ admits a section.
\end{rem}

We recall the following theorem due to Balmer, showing that the category of $A$-modules inherits a triangulated structure from $\sfS$ provided $A$ is separable.
\begin{thm}[{\cite[Cor.~5.18]{Balmer_sep}}]
    Let $\sfS$ be tensor $\infty$-triangulated and let $A\in\sfS$ be a separable monoid.  Then $\Modcat_\sfS A$ has a unique $\infty$-triangulation such that an $n$-triangle in $\Modcat_\sfS A$ is distinguished if and only if its image under $U_A$ is distinguished in $\sfS$.
\end{thm}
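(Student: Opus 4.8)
The plan is to transport the triangulation along the forgetful functor $U_A\colon \Modcat_\sfS A\to\sfS$, using separability to manufacture the choices — cones, octahedra, higher octahedra — that $U_A$ by itself only lets one \emph{recognise}. Since $U_A$ is faithful and conservative, and (this is what must be established) creates distinguished $n$-triangles, any such structure is forced: the suspension on $\Modcat_\sfS A$ must send $(x,\rho)$ to $(\S x,\S\rho)$, which is well defined and an autoequivalence because $-\tensor A$ is exact, so $\S x\tensor A\iso\S(x\tensor A)$; and a diagram is distinguished exactly when $U_A$ carries it to something distinguished in $\sfS$. One therefore \emph{defines} the distinguished $n$-triangles of $\Modcat_\sfS A$ this way, and the content of the theorem is that this class satisfies the $\infty$-triangulated axioms.

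The crucial construction is that every morphism $f\colon M\to N$ of $A$-modules embeds in a distinguished triangle. Because $A$ is separable, the counit $\varepsilon_A\colon F_AU_A\to\Ident_{\Modcat_\sfS A}$ admits a natural section $s$, so each $M$ is naturally a retract of the extended module $F_AU_AM$; hence $f$ is a retract, in the arrow category of $\Modcat_\sfS A$, of $F_AU_Af$. Now $U_AF_A=-\tensor A$ is exact and $\tensor$ preserves cones, so applying $F_A$ to a distinguished triangle on $U_Af$ in $\sfS$ yields a distinguished triangle of $\Modcat_\sfS A$ in which $F_AU_Af$ sits, with cone the extended module $F_A\cone(U_Af)$. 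Splitting the idempotent on that cone coming from the retraction data — legitimate because $\sfS$, being compactly generated, is idempotent complete, hence so is $\Modcat_\sfS A$ — produces an $A$-module cone of $f$ whose underlying object is $\cone(U_Af)$. Rotation and the fill-in axiom then reduce to the corresponding facts in $\sfS$ together with conservativity of $U_A$, while closure of the distinguished class under isomorphism is immediate from the definition.

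The remaining, and genuinely delicate, point is the octahedron and its higher analogues. Given composable morphisms in $\Modcat_\sfS A$ one has an octahedron in $\sfS$ on their $U_A$-images, whose vertices may be taken to be the $A$-module cones constructed above; the problem is to make every map in it $A$-linear and the whole diagram commute over $\Modcat_\sfS A$. The natural approach is again to present the desired octahedron as a retract of the octahedron on the extended morphisms $F_AU_A(-)$, where everything is visibly in place, and to split the resulting idempotent of octahedra. This is exactly the step that the plain octahedron axiom cannot support: one needs the octahedral datum to be functorial enough that an idempotent of the input induces an idempotent of the entire configuration, together with a ``direct-sum decomposition of octahedra'' lemma to conclude. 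Supplying this coherence is precisely the role of the hypothesis that $\sfS$ is $\infty$-triangulated: the higher octahedra carry the compatibilities needed to run the retract argument at the level of full (higher) octahedral data. Granting the $\infty$-triangulated refinement of the idempotent-completion machinery, $\Modcat_\sfS A$ acquires the asserted unique $\infty$-triangulation. (This is of course Balmer's theorem; the above only outlines the architecture of his proof.)
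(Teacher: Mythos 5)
You should know at the outset that the paper does not prove this statement: it is imported verbatim from Balmer \cite{Balmer_sep} (Cor.~5.18) and used as a black box, so there is no internal argument to compare yours against; I can only judge the sketch on its own terms. Your preliminary reductions are fine: $U_A$ is faithful and conservative, the suspension is forced to be $(x,\rho)\mapsto(\Sigma x,\Sigma\rho)$ because $-\tensor A$ is exact, the separability section of the counit $\varepsilon_A$ is natural, so every module map $f$ is naturally a retract of $F_AU_Af$, and $F_A$ applied to a distinguished triangle of $\sfS$ has distinguished image under $U_A$.

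The gap is that everything after this point is deferred, and what is deferred is precisely the theorem. To run the retract argument you must prove (i) that an idempotent endomorphism given on the first two vertices of a distinguished triangle (and likewise of an octahedron, and of an $n$-triangle for every $n$) can be extended to an idempotent endomorphism of the whole configuration --- an arbitrary fill-in of an idempotent is not idempotent, and this is the classical obstruction --- and (ii) that direct summands of distinguished $n$-triangles are again distinguished, coherently in $n$. Neither follows from the plain axioms; ``granting the $\infty$-triangulated refinement of the idempotent-completion machinery'' concedes the entire technical content of Balmer's result. Moreover, your appeal to idempotent completeness is not licensed by the hypotheses: the statement assumes only that $\sfS$ is tensor $\infty$-triangulated, not compactly generated (compact generation is a running assumption of this section of the paper, not of Balmer's theorem), so B\"okstedt--Neeman splitting is unavailable in the stated generality. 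Balmer's actual proof avoids object-level idempotent splitting altogether: separability is used to build an averaging projector on morphisms, which corrects an arbitrary fill-in $\cone(U_Af)\tensor A\to\cone(U_Af)$ in $\sfS$ into an honest unital, associative action, and the higher octahedra supply exactly the coherence needed to make such corrected choices satisfy the axioms. So your architecture is plausible but not a proof: either establish the summand-of-$n$-triangles and idempotent-extension lemmas (adding idempotent completeness as a hypothesis), or switch to the averaging-on-morphisms mechanism, which is what the cited source actually does.
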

In particular $\Modcat_\sfS A$ is triangulated, and a triangle
\begin{displaymath}
    (x,\rho_x) \to (y,\rho_y) \to (z,\rho_z) \to (\S x, \S\rho_{x})
\end{displaymath}
is distinguished precisely when $x\to y\to z\to \S x$ is.

By the exactness of $\tensor$ on $\sfS$, this observation yields an action of $\sfS$ on $\Modcat_\sfS A$ in the sense of \cite{StevensonActions}. Indeed, Balmer's description of the triangulated structure immediately implies that for $s\in \sfS$ the functor
\begin{displaymath}
s\otimes - \colon \Modcat_\sfS A \to \Modcat_\sfS A
\end{displaymath}
sends distinguished triangles to distinguished triangles (since $s\otimes-$ is exact as an endofunctor of $\sfS$).

We now sketch that such an action gives rise to an enrichment. Note that this observation is certainly not new, and details can be found for instance in \cite{KellyJanelidze}. 
\begin{lem} \label{lem:action-enr}
    Let $\sfT$ be a triangulated category which admits an $\sfS$-action 
		\begin{displaymath}
		\ast\colon \sfS \times \sfT \to \sfT
		\end{displaymath}
		such that $\ast$ is exact and coproduct preserving in each variable. Then $\sfT$ admits an enrichment $\mcT$, in $\sfS$, such that $\mcT_0 = \sfT$. Moreover, $\mcT$ is copowered over $\sfS$.
\end{lem}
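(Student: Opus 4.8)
The plan is to construct the hom-objects by Brown representability in $\sfS$ and to turn the $\sfS$-module structure coming from the action into composition maps and units. Fix $t,t'\in\sfT$. Since $\ast$ is exact and coproduct preserving in its first variable, the functor
\[
    \sfS^\op\to\mathrm{Ab},\qquad s\mapsto\sfT(s\ast t,\,t')
\]
is cohomological and sends coproducts to products, so by Brown representability for the compactly generated category $\sfS$ it is represented by an object $\mcT(t,t')\in\sfS$: there is an isomorphism $\sfS(s,\mcT(t,t'))\iso\sfT(s\ast t,t')$ natural in $s$. Taking $s=\unit$ and using the unit constraint $\unit\ast t\iso t$ of the action gives $\sfS(\unit,\mcT(t,t'))\iso\sfT(t,t')$, which will yield $\mcT_0=\sfT$ once the remaining structure is in place.

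Next I would produce the structure maps. Let $\mathrm{ev}_{t,t'}\colon\mcT(t,t')\ast t\to t'$ be the morphism corresponding under the representing isomorphism to $\id_{\mcT(t,t')}$. Using the associativity and unit constraints $(s\tensor s')\ast t\iso s\ast(s'\ast t)$ and $\unit\ast t\iso t$ that are part of the action, define $i_t\colon\unit\to\mcT(t,t)$ to be the element of $\sfS(\unit,\mcT(t,t))\iso\sfT(t,t)$ corresponding to $\id_t$, and define composition $\mcT(t',t'')\tensor\mcT(t,t')\to\mcT(t,t'')$ to be the morphism corresponding under
\[
    \sfS\big(\mcT(t',t'')\tensor\mcT(t,t'),\,\mcT(t,t'')\big)\iso\sfT\big((\mcT(t',t'')\tensor\mcT(t,t'))\ast t,\,t''\big)
\]
to the composite
\[
    (\mcT(t',t'')\tensor\mcT(t,t'))\ast t\iso\mcT(t',t'')\ast(\mcT(t,t')\ast t)\xrightarrow{1\ast\mathrm{ev}_{t,t'}}\mcT(t',t'')\ast t'\xrightarrow{\mathrm{ev}_{t',t''}}t''.
\]
That this composition is associative and unital, and that the induced composition on $\sfS(\unit,-)$ matches the composition of $\sfT$ (so that indeed $\mcT_0=\sfT$), follows from the pentagon and triangle coherence of the action together with the Yoneda lemma in $\sfS$; these are routine but slightly lengthy diagram chases, carried out through the representable functors $\sfS(-,\mcT(t,t'))$ since there are no honest morphisms of $\mcT$ to manipulate directly.

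Finally, for the copowers set $s\odot t:=s\ast t$. To check the enriched universal property, observe that for all $t'\in\sfT$ and $s'\in\sfS$ there are natural isomorphisms
\begin{align*}
    \sfS\big(s',\mcT(s\ast t,t')\big)
        &\iso\sfT\big(s'\ast(s\ast t),t'\big)
         \iso\sfT\big((s'\tensor s)\ast t,t'\big) \\
        &\iso\sfS\big(s'\tensor s,\mcT(t,t')\big)
         \iso\sfS\big(s',\mcS(s,\mcT(t,t'))\big),
\end{align*}
using in turn the representing isomorphism, the associativity constraint of the action, the representing isomorphism again, and the $\tensor$--$\mcS(-,-)$ adjunction in $\sfS$. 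By the Yoneda lemma this forces $\mcT(s\ast t,t')\iso\mcS(s,\mcT(t,t'))$ in $\sfS$, naturally in $t'$, so $\mcT$ is copowered with copowers given by the action.

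I expect the main obstacle to be purely organisational: checking that composition defined via the evaluation maps is genuinely associative and unital. This is where the whole coherence package of the action is consumed, and where care is needed to express everything through the representable functors so that Yoneda applies. No new idea beyond the standard ``a tensoring/action induces an enrichment'' principle is required; the only real hypothesis-checking is that $\ast$ being exact and coproduct preserving in the acting variable is exactly what makes $\sfT(-\ast t,t')$ satisfy the conditions of Brown representability.
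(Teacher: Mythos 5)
Your proposal is correct and follows essentially the same route as the paper: the paper also obtains the hom-objects from Brown representability (phrased as the existence of a right adjoint $\mcT(x,-)$ to the exact, coproduct-preserving functor $-\ast x$), defines composition and units via the evaluation/counit maps exactly as you do, verifies $\mcT_0=\sfT$ by evaluating at $\unit$, and takes copowers to be given by the action, leaving the associativity and unitality coherence checks as routine diagram chases just as you indicate.
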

\begin{proof}
    We only give a sketch of the proof to fix ideas. Further details can be found in \cite{KellyJanelidze}.

    Fix $x\in\sfT$ and consider the functor $-\ast x\colon \sfS \to \sfT$.  It is exact and commutes with coproducts, and thus, by Brown representability, admits a right adjoint which we denote
    \begin{displaymath}
        \mcT(x,-)\colon \sfT \to \sfS.
    \end{displaymath}
    We claim that evaluating this functor at $y\in \sfT$ gives the hom object $\mcT(x,y)$ of a category $\mcT$ enriched in $\sfS$.

    First note that there is a natural evaluation morphism $\varepsilon_{x,y}\colon \mcT(x,y) \ast x \to y$ given by the counit.
    One defines a composition, using these evaluation maps, as the image of the identity of $z$ traced through
    \begin{align*}
        1_z \in \sfT(z,z) \to \sfT\big(\mcT(y,z) \ast y, z\big) \to& \sfT\big(\mcT(y,z) \ast \big(\mcT(x,y) \ast x\big), z \big) \\
            &\quad\iso \sfT\big(\big(\mcT(y,z) \tensor \mcT(x,y)\big) \ast x, z \big) \\
            &\quad\iso \sfS\big(\mcT(y,z) \tensor \mcT(x,y), \mcT(x,z)\big)
    \end{align*}
    The unit $i_x\colon \unit \to \mcT(x,x)$ is given via the isomorphism
    \begin{displaymath}
        1_x\in \sfT(x,x) \iso \sfT(\unit\ast x, x) \iso \sfS\big(\unit, \mcT(x,x)\big),
    \end{displaymath}
    which also shows that $\mcT_0 = \sfT$.  One then needs to check that the composition defined above is in fact associative and unital, which is a (mostly) straightforward exercise in diagram chasing. That $\mcT$ is copowered over $\sfS$ is immediate from the construction of the enrichment as copowers are just given by the $\sfS$-action.
\end{proof}

One can also give an interpretation of enriched functors in terms of actions. We next sketch a version of this sort of result which is relevant to our work. We don't give complete details (and are somewhat imprecise in the statement) as we will not really require this statement in the sequel. A detailed and more general treatment can be found in \cite{GordonPower}.  We recall that an $\sfS$-action provides us with unitors, that is, a natural isomorphism $l_x\colon 1\odot x = 1*x\to x$.
\begin{prop} \label{prop:action-enr-fun}
    Let $\sfF\colon\sfT \to \sfU$ be an exact coproduct preserving functor of triangulated categories admitting $\sfS$-actions as in Lemma \ref{lem:action-enr}.  The following are equivalent:
    \begin{enumerate}
        \item $\sfF$ is the underlying functor of a copower preserving $\sfS$-functor $\mcF\colon \mcT \to \mcU$.
        \item $\sfF$ commutes with the $\sfS$-action, via copowers, on $\sfT$ and $\sfU$, i.e., there are natural isomorphisms $s\odot \sfF x \stackrel{\gamma_{s,x}}\to \sfF(s\odot x)$ for all $s$ in $\sfS$ and $x$ in $\sfT$ verifying the following unitor and cocycle conditions (up to associators which we omit):
				\begin{align*}
                    l_{\sfF x} &= (\sfF l_x)\gamma_{\unit,x} \\
                    \gamma_{s\otimes s', x} &= \gamma_{s,s'\odot x} (s\odot\gamma_{s',x}).
				\end{align*}
    \end{enumerate}
\end{prop}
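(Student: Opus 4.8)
The plan rests on the observation, already used in Lemma~\ref{lem:action-enr}, that with the enrichments constructed there the copowers in $\mcT$ and $\mcU$ are literally the $\sfS$-actions, $s\odot x = s\ast x$, with defining adjunction isomorphisms the adjunctions $(-\ast x)\dashv\mcT(x,-)$. Consequently, for any $\sfS$-functor $\mcF$ lifting $\sfF$ the comparison morphism of Lemma~\ref{lem_copower_pres} is precisely a morphism of the shape $s\odot\sfF x\to\sfF(s\odot x)$, so the proposition is the functor-level incarnation of the equivalence between tensored $\sfS$-categories and $\sfS$-actegories found in \cite{GordonPower}, and the task is to match up the two pieces of structure. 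For the implication (1)$\Rightarrow$(2) one simply sets $\gamma_{s,x}$ equal to the copower comparison map of Lemma~\ref{lem_copower_pres}; it is an isomorphism by hypothesis, and natural in both variables since it is assembled from $\mcF_{-,-}$ and the structural isomorphisms of the copowers. The unitor identity $l_{\sfF x} = (\sfF l_x)\,\gamma_{\unit,x}$ and the cocycle identity $\gamma_{s\otimes s',x} = \gamma_{s,\,s'\odot x}\,(s\odot\gamma_{s',x})$ then reduce, after unwinding $\gamma$ through the copower adjunctions, to the unitality and associativity axioms for the $\sfS$-functor $\mcF$ together with the coherence of the canonical isomorphisms $\mcT\big(u\odot(v\odot a),-\big)\cong\mcV\big(u,\mcV(v,\mcT(a,-))\big)\cong\mcT\big((u\otimes v)\odot a,-\big)$; these are purely formal and I would not write them out.

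The implication (2)$\Rightarrow$(1) is the substantive one. Given $\sfF$ together with the isomorphisms $\gamma_{s,x}$, I would define the structure maps of the would-be $\sfS$-functor $\mcF\colon\mcT\to\mcU$ by declaring
\begin{displaymath}
    \mcF_{x,x'}\colon \mcT(x,x') \to \mcU(\sfF x, \sfF x')
\end{displaymath}
to be the morphism of $\sfS$ corresponding, under the adjunction isomorphism $\sfU\big(\mcT(x,x')\ast\sfF x,\,\sfF x'\big)\cong\sfS\big(\mcT(x,x'),\,\mcU(\sfF x,\sfF x')\big)$, to the composite
\begin{displaymath}
    \mcT(x,x')\ast\sfF x \xto{\gamma_{\mcT(x,x'),x}} \sfF\big(\mcT(x,x')\ast x\big) \xto{\sfF\varepsilon_{x,x'}} \sfF x',
\end{displaymath}
where $\varepsilon_{x,x'}\colon\mcT(x,x')\ast x\to x'$ is the evaluation map of Lemma~\ref{lem:action-enr}. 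It then remains to verify: (a) that the $\mcF_{x,x'}$ are compatible with composition, i.e.\ the square expressing compatibility with composition in the definition of an $\sfS$-functor commutes; (b) that they are unital, $\mcF_{x,x}\circ i_x = i_{\sfF x}$; (c) that the underlying functor of $\mcF$ is $\sfF$; and (d) that $\mcF$ preserves copowers, with comparison map $\gamma$. Parts (b) and (c) follow from the unitor condition on $\gamma$ and naturality of $\gamma$, together with the fact that both $i_x$ and the underlying-functor structure of $\mcF$ are defined through the unitors $l_x$ of Lemma~\ref{lem:action-enr} (and (b) is in fact the case $\alpha = 1_x$ of (c)). For (d) one traces $\id_{s\odot x}$ through the construction of Lemma~\ref{lem_copower_pres} (dual to that of Lemma~\ref{lem_power_pres}) with $\mcF_{-,-}$ as just defined; using the triangle identities for the adjunction $(-\ast x)\dashv\mcT(x,-)$ this recovers exactly $\gamma_{s,x}$, so copower preservation is automatic.

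I expect step (a) to be the main obstacle. Concretely, one transports both legs of the composition square across the copower adjunctions and the evaluation maps --- recalling that composition in $\mcT$ and $\mcU$ is itself defined in terms of the evaluation maps in Lemma~\ref{lem:action-enr} --- so that both become morphisms out of $\big(\mcT(x',x'')\otimes\mcT(x,x')\big)\ast\sfF x$, and one checks they agree. Carrying this out forces one to commute the action of $\mcT(x',x'')\otimes\mcT(x,x')$ past $\sfF$, and it is exactly here that the cocycle condition $\gamma_{s\otimes s',x}=\gamma_{s,\,s'\odot x}(s\odot\gamma_{s',x})$ enters, alongside naturality of $\gamma$ and associativity of composition in $\mcT$ and $\mcU$; the rest is bookkeeping with associators and triangle identities. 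Since the proposition is stated only informally and is not needed in the sequel, I would present (a) as a guided diagram chase rather than in full detail.
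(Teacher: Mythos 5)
Your proposal follows essentially the same route as the paper: for (1)$\Rightarrow$(2) you take $\gamma$ to be the comparison map of Lemma~\ref{lem_copower_pres}, and for (2)$\Rightarrow$(1) you define $\mcF_{x,x'}$ as the adjunct of $(\sfF\varepsilon_{x,x'})\gamma_{\mcT(x,x'),x}$ and verify unitality via the unitor condition and compatibility with composition via naturality of $\gamma$ plus the cocycle condition, which is exactly the paper's argument (the paper likewise treats copower preservation as automatic from the construction). The plan is correct; the only remaining work is the diagram chase for composition that you have outlined but not carried out, and it goes through exactly as you predict.
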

\begin{proof}
   If $\mcF$ is an $\sfS$-functor then one has natural comparison maps, as in Lemma~\ref{lem_copower_pres}, $s\odot \mcF x \to \mcF(s\odot x)$. Assuming that $\mcF$ preserves copowers just says that these natural maps are isomorphisms and, as they arise via the universal property of copowers, they satisfy the required unitor and cocycle conditions yielding compatibility of $\sfF$ with the action.
	
	Now let us suppose we are given an $\sfF$ together with coherent comparison maps $\gamma_{s,x}$ for all $s\in \sfS$ and $x\in \sfT$. We construct a candidate $\mcF$ by taking the same object assignment as for $\sfF$ and defining $\mcF_{x,y}$ to be the image of the composite
	\begin{displaymath}
	\commd{
        \node (1) at (0,0) {$\mcT(x,y)\odot \sfF x$};
        \node (2) at (5,0) {$\sfF(\mcT(x,y)\odot x)$};
        \node (3) at (8,0) {$\sfF y$};
    }{
        (1) edge node[auto] {$\gamma_{\mcT(x,y), x}$} (2)
        (2) edge node[auto] {$\sfF \varepsilon_{x,y}$} (3)
    }
	\end{displaymath}
	under the adjunction isomorphism
	\begin{displaymath}
        \sfU\big(\mcT(x,y)\odot \sfF x, \sfF y\big) \cong \sfS \big(\mcT(x,y), \mcU(\sfF x, \sfF y)\big).
	\end{displaymath}
	We then have to verify that $\mcF$ is in fact an $\sfS$-functor, i.e., the above morphisms are compatible with units and composition.
    
    From the unitor condition and naturality of $\gamma$, we obtain
    \begin{align*}
        l_{\sfF x} &= (\sfF l_x)\gamma_{\unit,x} \\
            &= (\sfF \varepsilon_x)\big(\sfF(i_x\odot1)\big)\gamma_{\unit,x} \\
            &= (\sfF \varepsilon_x) \gamma_{\mcT(x,x),x} (i_x\odot1)\colon \unit\odot \sfF x \to \sfF x,
    \end{align*}
    and passing through the adjunction yields $i_{\sfF x} = \mcF_{x,x} i_x\colon \unit \to \mcU(\sfF x,\sfF x)$, showing that $\mcF$ preserves units.

    To show that $\mcF$ preserves compositions, it is sufficient to show equality of the two adjunct morphisms $\mcT(y,z)\tensor\mcT(x,y) \odot \sfF x \to \sfF z$; namely that
    \begin{displaymath}
        \varepsilon_{\sfF y,\sfF z} (1\odot \varepsilon_{\sfF x,\sfF y}) (\mcF_{y,z} \odot \mcF_{x,y} \odot 1) = \varepsilon_{\sfF x, \sfF z} \mcF_{x,z} (\circ \odot 1).
    \end{displaymath}
    Using that $\varepsilon_{\sfF x,\sfF y}(\mcF_{x,y}\odot1) = (\sfF \varepsilon_{x,y})\gamma_{\mcT(x,y),x}$, naturality of $\gamma$ and the cocycle condition, we compute
    \begin{align*}
        &\varepsilon_{\sfF y,\sfF z} (1\odot \varepsilon_{\sfF x,\sfF y}) (\mcF_{y,z} \odot \mcF_{x,y} \odot 1) \\
        &\quad= (\sfF\varepsilon_{y,z}) \gamma_{\mcT(y,z),y} (1\odot\sfF\varepsilon_{x,y}) (1\odot\gamma_{\mcT(x,y),x}) \\
        &\quad= (\sfF\varepsilon_{y,z}) \sfF(1\odot \varepsilon_{x,y}) \gamma_{\mcT(y,z),\mcT(x,y)\odot y} (1\odot\gamma_{\mcT(x,y),x}) \\
        &\quad= (\sfF\varepsilon_{x,z}) \sfF(\circ\otimes1) \gamma_{\mcT(y,z)\tensor\mcT(x,y),x} \\
        &\quad= (\sfF\varepsilon_{x,z})\gamma_{\mcT(x,z),x}(\circ\odot1) \\
        &\quad= \varepsilon_{\sfF x, \sfF z} \mcF_{x,z} (\circ \odot 1).
    \end{align*}
    Thus $\mcF$ is an $\sfS$-functor which, by its construction, automatically preserves copowers.
\end{proof}

We now consider the canonical action of $\sfS$ on $\Modcat_\sfS A$ given by $s\tensor(x,\rho) = (s\tensor x, 1\tensor\rho)$.  Using the abstract result on actions giving enrichments we see that $\Modcat_\sfS A$ admits a corresponding enrichment which we shall denote by $\cModcat_\sfS A$, and whose hom objects we denote by $[-,-]_A$.

We note, as a particular consequence of the construction, that the functor $[A,-]_A\colon \Modcat_\sfS A \to \sfS$ arises as the right adjoint of $F_A = -\tensor A\colon \sfS \to \Modcat_\sfS A$, and therefore $[A,-]_A$ must be isomorphic to $U_A$, the forgetful functor.  This isomorphism can be made explicit by considering the adjunction isomorphism
\begin{displaymath}
    \Modcat_\sfS A(X\tensor A, X) \stackrel\sim\to \sfS(X,[A,X]_A),
\end{displaymath}
for any $A$-module $(X,\rho)$.  The isomorphism $X \stackrel\sim\to [A,X]_A$ is then given by $\rho^\flat$, the right adjunct to $\rho$ (where $\rho$ giving a map of right modules just expresses associativity of the right action of $A$ on $X$).  We will keep this notation in the sequel, denoting the right adjunct of $f$, say, by $f^\flat$ and the left adjunct by $f^\sharp$.  We omit the adjunction from the notation, as in all cases it will be clear from the context.

We also note that
\begin{displaymath}
    \circ\colon [Y,Z]_A \tensor [X,Y]_A \to [X,Z]_A
\end{displaymath}
by definition arises as the right adjunct to the composite
\begin{displaymath}
    [Y,Z]_A \tensor [X,Y]_A \tensor X \xto{1\tensor\varepsilon_{X,Y}} [Y,Z]_A \tensor Y \xto{\varepsilon_{Y,Z}} Z
\end{displaymath}
of counits.

The module structure is tightly connected with composition in the enrichment in the following way.
\begin{lem} \label{lem:rho-circ}
    Let $(X,\rho)$ be a right $A$-module.  Then the following diagram in $\sfS$ commutes:
    \begin{displaymath}
        \commd{
            \node (1) at (0,2) {$X\tensor A$};
            \node (2) at (6,2) {$X$};
            \node (3) at (0,0) {$[A,X]_A \tensor [A,A]_A$};
            \node (4) at (6,0) {$[A,X]_A$};
        }{
            (1) edge node[auto] {$\rho$} (2)
            (1) edge node[auto] {$\rho^\flat\tensor\mu^\flat$} node[auto,swap] {$\wr$} (3)
            (2) edge node[auto] {$\rho^\flat$} node[auto,swap] {$\wr$} (4)
            (3) edge node[auto] {$\circ$} (4)
        }
    \end{displaymath}
\end{lem}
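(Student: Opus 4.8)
The plan is to deduce this from the associativity axiom of the right $A$-module $(X,\rho)$, by transporting both composites in the diagram through the adjunction $-\tensor A = F_A = -\ast A$ with right adjoint $[A,-]_A \iso U_A$. Since the two composites in question are both morphisms $X\tensor A \to [A,X]_A$ in $\sfS$ and $U_A$ is faithful, it will suffice to show that their right adjuncts — the morphisms $X\tensor A\tensor A \to X$ obtained by tensoring with $1_A$ and postcomposing with the counit $\varepsilon_{A,X}\colon [A,X]_A\tensor A \to X$ — coincide.

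The first step is to record the three facts I will use. First, $\varepsilon_{A,X}\circ(\rho^\flat\tensor 1_A) = \rho$, simply because $\rho^\flat$ is by definition the right adjunct of the $A$-linear map $\rho\colon X\ast A \to X$ (here $A$-linearity of $\rho$ is exactly the associativity square in \eqref{eq:module}); likewise $\varepsilon_{A,A}\circ(\mu^\flat\tensor1_A) = \mu$. Second, by the description of the enriched composition recalled just before the lemma, the map $\circ\colon [A,X]_A\tensor[A,A]_A \to [A,X]_A$ is the right adjunct of $\varepsilon_{A,X}\circ(1\tensor\varepsilon_{A,A})$, so that $\varepsilon_{A,X}\circ(\circ\tensor1_A) = \varepsilon_{A,X}\circ(1_{[A,X]_A}\tensor\varepsilon_{A,A})$. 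Next I would compute the two adjuncts. Along the top and right of the diagram, bifunctoriality of $\tensor$ together with the first fact gives
\begin{displaymath}
\varepsilon_{A,X}\circ\big((\rho^\flat\circ\rho)\tensor1_A\big) = \rho\circ(\rho\tensor1_A).
\end{displaymath}
Along the left and bottom, the description of $\circ$, followed by $\varepsilon_{A,A}\circ(\mu^\flat\tensor1_A)=\mu$ and then the first fact once more, gives
\begin{displaymath}
\varepsilon_{A,X}\circ\big((\circ\circ(\rho^\flat\tensor\mu^\flat))\tensor1_A\big) = \varepsilon_{A,X}\circ(\rho^\flat\tensor\mu) = \rho\circ(1_X\tensor\mu).
\end{displaymath}
The two resulting morphisms $\rho\circ(\rho\tensor1_A)$ and $\rho\circ(1_X\tensor\mu)$ are equal by the associativity square in \eqref{eq:module}, so the two original composites agree and the diagram commutes.

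The only real obstacle is bookkeeping rather than mathematics: one must keep track of which adjunction each $\flat$ refers to, use the identification $-\ast A = F_A = -\tensor A$ under which $[A,-]_A\iso U_A$ and $\varepsilon_{A,X}$ becomes the (adjunct incarnation of the) action map $\rho$, and insert the associativity and symmetry coherence isomorphisms of $\sfS$ that have been suppressed above, so that the displayed equalities typecheck on the nose.
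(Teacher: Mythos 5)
Your proposal is correct and is essentially the paper's own proof: both compare the two composites by passing to their adjuncts under the $-\tensor A \dashv [A,-]_A$ adjunction, compute these to be $\rho(\rho\tensor 1)$ and $\rho(1\tensor\mu)$ using $\varepsilon_{A,X}(\rho^\flat\tensor1)=\rho$, $\varepsilon_{A,A}(\mu^\flat\tensor1)=\mu$ and the description of $\circ$ via counits, and conclude from the associativity square of the module axioms. (Only cosmetic remarks: what you call the ``right adjunct'' is what the paper denotes $(-)^\sharp$ and calls the left adjunct, and the appeal to faithfulness of $U_A$ is unnecessary --- injectivity of the adjunction bijection already suffices.)
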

\begin{proof}
    We consider the left adjuncts, and compute
    \begin{align*}
        (\rho^\flat\rho)^\sharp &= \varepsilon_{A,X} (\rho^\flat\rho\tensor1) \\
            &= \varepsilon_{A,X} (\rho^\flat\tensor1) (\rho\tensor1) \\
            &= \rho(\rho\tensor1),
    \end{align*}
    and, on the other hand
    \begin{align*}
        \big(\circ(\rho^\flat\tensor\mu^\flat)\big)^\sharp &= \varepsilon_{A,X}(\circ\tensor1)(\rho^\flat\tensor\mu^\flat\tensor1) \\
            &= \varepsilon_{A,X} (1\tensor\varepsilon_{A,A}) (\rho^\flat\tensor\mu^\flat\tensor1) \\
            &= \varepsilon_{A,X} (\rho^\flat\tensor\mu) \\
            &= \varepsilon_{A,X} (\rho^\flat\tensor1) (1\tensor\mu) \\
            &= \rho(1\tensor\mu).
    \end{align*}
    As $(X,\rho)$ is a right $A$-module these two expressions are equal.  Consequently the diagram commutes.
\end{proof}

Our aim is to determine when a functor between module categories of separable monoids is given by tensoring with a bimodule.  We will make precise what this means momentarily, but let us emphasize that it should at least be ``$\sfS$-linear''.  Thus by Lemma \ref{lem:action-enr} and Proposition \ref{prop:action-enr-fun} we are really making a statement about enriched functors.  

\subsection{Tensor products over separable monoids}
In order to make sense of the statement in Theorem \ref{thm:informaltensorrep} we need to define the tensor product over a separable monoid $A$ in $\sfS$.

First let us fix a right $A$-module $(X,\rho)$ and a left $A$-module $(Y,\lambda)$.  The endomorphism $e$
\begin{displaymath}
    X \tensor Y \stackrel\sim\to X \tensor\unit\tensor Y \xto{1\tensor\eta\tensor1} X \tensor A \tensor Y \xto{1\tensor\sigma\tensor1} X\tensor A\tensor A\tensor Y \xto{\rho \tensor \lambda} X \tensor Y
\end{displaymath}
is an idempotent, which one sees by considering the following commutative diagram (where we omit the intermediate objects, which can be deduced from the morphisms, for space reasons)
\begin{displaymath}
    \commd{
        \node (2) at (2,4) {$X\tensor Y$};
        \node (3) at (5,4) {};
        \node (4) at (8,4) {};
        \node (8) at (8,2) {};
        \node (10) at (8,0) {};
        \node (5) at (11,4) {};
        \node (9) at (11,2) {};
        \node (11) at (11,0) {};
        \node (6) at (14,4) {};
        \node (7) at (14,0) {$X\tensor Y$};
    }{
        (2) edge node[auto] {$1\tensor\sigma\eta\tensor1$} (3)
        (3) edge node[auto] {$1^{\tensor2}\tensor\eta\tensor1^{\tensor2}$} (4)
        (4) edge node[auto] {$1^{\tensor2}\tensor\sigma\tensor1^{\tensor2}$} (5)
        (5) edge node[auto] {$\rho\tensor1^{\tensor2}\tensor\lambda$} (6)
        (6) edge node[auto] {$\rho\tensor\lambda$} (7)
        (4) edge node[auto] {$1\tensor\mu\tensor1^{\tensor2}$} (8)
        (5) edge node[auto] {$1\tensor\mu\tensor1^{\tensor3}$} (9)
        (8) edge node[auto] {$1\tensor\mu\tensor1$} (10)
        (9) edge node[auto] {$1^{\tensor2}\tensor\mu\tensor1$} (11)
        (8) edge node[auto,swap] {$1\tensor\sigma\tensor1^{\tensor2}$} (9)
        (10) edge node[auto,swap] {$1\tensor\sigma\tensor1$} (11)
        (3) edge node[auto,swap] {$1^{\tensor4}$} (8)
        (2) edge node[auto,swap] {$1\tensor\eta\tensor1$} (10)
        (11) edge node[auto,swap] {$\rho\tensor\lambda$} (7)
    }
\end{displaymath}
where the composition along the top is $e^2$ and the one along the bottom is $e$.  As idempotents in $\sfS$ split, $\im(e)$ is a summand of $X\otimes Y$ and we define the tensor product over $A$ as
\begin{displaymath}
    X\tensor_A Y := \im(e),
\end{displaymath}
following Balmer \cite{Balmer_tt}.

We fix notation for the splitting as follows
\begin{displaymath}
    \commd{
        \node (1) at (0,0) {$\ker(e)$};
        \node (2) at (4,0) {$X\tensor Y$};
        \node (3) at (8,0) {$\im(e)$};
    }{
        ($(1.east)+(0,.1)$) edge node[auto] {$j$} ($(2.west)+(0,.1)$)
        ($(2.east)+(0,.1)$) edge node[auto] {$p$} ($(3.west)+(0,.1)$)
        ($(3.west)-(0,.1)$) edge node[auto] {$i$} ($(2.east)-(0,.1)$)
        ($(2.west)-(0,.1)$) edge node[auto] {$q$} ($(1.east)-(0,.1)$)
    }
\end{displaymath}
where both the upper and lower row are split exact triangles which satisfy
\begin{displaymath}
    pi = 1,\quad ip = e; \qquad qj = 1, \quad jq = 1-e.
\end{displaymath}

The next lemma shows that this definition of the tensor product over $A$ coincides with the usual one.
\begin{lem}[{\cite{Balmer_tt}}] \label{lem:coequalizer}
    The diagram
    \begin{displaymath}
        \commd{
            \node (1) at (0,0) {$X\tensor A \tensor Y$};
            \node (2) at (5,0) {$X\tensor Y$};
            \node (3) at (8,0) {$\im(e)$};
        }{
            ($(1.east)+(0,.1)$) edge node[auto] {$\rho\tensor1$} ($(2.west)+(0,.1)$)
            ($(1.east)-(0,.1)$) edge node[auto,swap] {$1\tensor\lambda$} ($(2.west)-(0,.1)$)
            (2) edge node[auto] {$p$} (3)
        }
    \end{displaymath}
    is a coequalizer in $\sfS$.
\end{lem}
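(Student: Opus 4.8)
The strategy is to work directly with the split idempotent $e$ rather than verifying the universal property of the coequalizer from scratch. Recall the splitting data $p\colon X\tensor Y\to\im(e)$ and $i\colon\im(e)\to X\tensor Y$ with $pi=1$ and $ip=e$; in particular $p$ is a split epimorphism, so any factorization of a morphism through $p$, should it exist, is automatically unique. It therefore suffices to prove two statements: \textbf{(i)} $p(\rho\tensor1)=p(1\tensor\lambda)$, so that $p$ is a cocone over the parallel pair; and \textbf{(ii)} every $h\colon X\tensor Y\to W$ with $h(\rho\tensor1)=h(1\tensor\lambda)$ satisfies $he=h$, for then $h=(hi)p$ supplies the (necessarily unique) factorization through $p$. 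Throughout I would suppress the unitor and associator isomorphisms of $\sfS$, as is standard in this sort of computation.

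For \textbf{(i)}, since $i$ is a monomorphism it is equivalent to show $e(\rho\tensor1)=e(1\tensor\lambda)$, and I would establish the stronger fact that both of these composites $X\tensor A\tensor Y\to X\tensor Y$ equal the single map $(\rho\tensor\lambda)(1\tensor\sigma\tensor1)$. Writing $e=(\rho\tensor\lambda)(1\tensor\sigma\eta\tensor1)$ and sliding the right action $\rho$ (resp.\ the left action $\lambda$) past the inserted copy of $\sigma\eta$, one uses associativity of the $A$-action on $X$ (resp.\ on $Y$) to merge two multiplications into a single $\mu$; the computation then reduces to showing that precomposing the composite $(\mu\tensor1)(1\tensor\sigma)$ (resp.\ $(1\tensor\mu)(\sigma\tensor1)$) with the unit $\eta$ in the appropriate tensor slot recovers $\sigma$, which is immediate from $\sigma\mu=(\mu\tensor1)(1\tensor\sigma)=(1\tensor\mu)(\sigma\tensor1)$ together with the unitality of the monoid $A$. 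This is the step that genuinely uses that $\sigma$ is a \emph{bimodule} section of $\mu$ --- the two legs of the separability diagram are exactly what is needed to treat the two parallel arrows --- and I expect the careful bookkeeping of the coherence isomorphisms suppressed above to be the main obstacle here.

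For \textbf{(ii)}, given such an $h$ I would rewrite $he=h(\rho\tensor\lambda)(1\tensor\sigma\eta\tensor1)$ by first applying the hypothesis to replace the instance of $\rho$ occurring inside $\rho\tensor\lambda$ by a left action of $A$ on $Y$, then using associativity of that left action to collapse the two resulting multiplications into $\mu$; since $\mu\circ\sigma\eta=\eta$ (which is $\mu\sigma=1$ precomposed with $\eta$), what survives is $h(1\tensor\lambda)(1\tensor\eta\tensor1)=h(1\tensor(\lambda(\eta\tensor1)))=h$ by the left unit axiom for $Y$. Note that this half needs only $\mu\sigma=1$ and the module axioms, not the full bimodule-section property. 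Combining \textbf{(i)} and \textbf{(ii)} --- with uniqueness of factorizations coming for free from $p$ being epic --- exhibits $\im(e)$ together with $p$ as the coequalizer, which in particular identifies $X\tensor_A Y$ with the usual tensor product over $A$.
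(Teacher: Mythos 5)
Your proof is correct and follows essentially the same route as the paper: show via the separability relation that $e(\rho\tensor1)=e(1\tensor\lambda)=(\rho\tensor\lambda)(1\tensor\sigma\tensor1)$, show that any $h$ coequalizing the pair satisfies $he=h$, and then conclude using the splitting of $e$. The only (immaterial) differences are that in the second step you push the computation onto the left action $\lambda$ where the paper uses the right action $\rho$, and you obtain the factorization directly as $(hi)p$ with uniqueness from $p$ being a split epimorphism, whereas the paper argues via $fj=0$ using the complementary summand.
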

\begin{proof}
    Composing with the monomorphism $i$, one sees that $p(\rho\tensor1) = p(1\tensor\lambda)$ is equivalent to $e(\rho\tensor1) = e(1\tensor\lambda)$.  The composite $e(\rho \otimes 1)$ is
    \begin{displaymath}
        X\tensor A\tensor Y \xto{1^{\tensor2}\tensor\sigma\eta\tensor1} X\tensor A\tensor A\tensor A\tensor Y \xto{\rho\tensor1^{\tensor3}} X\tensor A\tensor A\tensor Y \xto{\rho\tensor\lambda} X\tensor Y.
    \end{displaymath}
    Replacing $\rho\tensor1^{\tensor3}$ by $1\tensor\mu\tensor1^{\tensor2}$, and interchanging $\mu$ and $\sigma$ as per the definition of a separable monoid, we can rewrite this as
    \begin{displaymath}
        X\tensor A\tensor Y \xto{1\tensor\sigma\tensor1} X\tensor A\tensor A\tensor Y \xto{\rho\tensor\lambda} X\tensor Y.
    \end{displaymath}
    The composite $e(1\otimes \lambda)$ can also be rewritten this way and hence $e(\rho\otimes 1) = e(1\otimes \lambda)$.

    Next we show that the idempotent $e$ precisely detects when a morphism coequalizes $\rho\otimes 1$ and $1\otimes \lambda$.  More precisely, we claim that for a morphism $f\colon X\tensor Y \to Z$, $f(\rho\tensor1) = f(1\tensor\lambda)$ if and only if $f = fe$.
    
    First assume that $f(\rho\tensor1) = f(1\tensor\lambda)$.  Thus we have a commutative diagram
    \begin{displaymath}
        \commd{
            \node (1) at (-1,4) {$X\tensor Y$};
            \node (2) at (2,4) {};
            \node (3) at (5,4) {};
            \node (4) at (8,4) {};
            \node (5) at (11,4) {$X\tensor Y$};
            \node (6) at (5,2) {};
            \node (7) at (8,2) {$X\tensor Y$};
            \node (8) at (11,2) {$Z$};
        }{
            (1) edge node[auto] {$1\tensor\eta\tensor1$} (2)
            (2) edge node[auto] {$1\tensor\sigma\tensor1$} (3)
            (3) edge node[auto] {$\rho\tensor1^{\tensor2}$} (4)
            (4) edge node[auto] {$1\tensor\lambda$} (5)
            (5) edge node[auto] {$f$} (8)
            (4) edge node[auto] {$\rho\tensor1$} (7)
            (6) edge node[auto,swap] {$\rho\tensor1$} (7)
            (7) edge node[auto,swap] {$f$} (8)
            (3) edge node[auto] {$1\tensor\mu\tensor1$} (6)
            (2) edge node[auto,swap] {$1^{\tensor3}$} (6)
        }
    \end{displaymath}
    whose top row is $e$, showing that $fe = f$.  For the converse we use the first part of the proof and obtain
    \begin{displaymath}
        f(\rho\tensor1) = fe(\rho\tensor1) = fe(1\tensor\lambda) = f(1\tensor\lambda).
    \end{displaymath}
    
    Lastly, we need to show that the universal property holds under the assumption $fe = f$.  This equality can be rewritten as $fjq = f(1-e) = 0$, implying $fj = 0$ since $q$ is an epimorphism.  It follows that there is a unique morphism $\bar f\colon \im(e) \to Z$ such that $\bar fp = f$.
\end{proof}

The tensor product constructed above is a left adjoint in two variables to the internal homs we constructed to enrich the module categories over separable monoids. The next proposition makes this precise.

\begin{prop}
    Let $A$ and $B$ be separable monoids in $\sfS$.  Given a right $A$-module $X$, an $A$-$B$-bimodule $Y$ and a right $B$-module $Z$ there is an isomorphism
    \begin{displaymath}
        \Modcat_\sfS B( X \tensor_A Y, Z ) \iso \Modcat_\sfS A( X, [Y,Z]_B )
    \end{displaymath}
    natural in all three variables.
\end{prop}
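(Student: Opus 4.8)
The plan is to realise both sides of the claimed isomorphism as equalizers of the same parallel pair, using the coequalizer presentation of $X\tensor_A Y$ from Lemma~\ref{lem:coequalizer} together with the adjunction $\Modcat_\sfS B(W\tensor Y,Z)\iso\sfS\big(W,[Y,Z]_B\big)$ defining the enrichment $\cModcat_\sfS B$ (Lemma~\ref{lem:action-enr}), which is natural in $W\in\sfS$. First I would pin down the two module structures at stake. Since the idempotent $e$ on $X\tensor Y$ only uses the right $A$-action on $X$ and the \emph{left} $A$-action on $Y$ (together with $\sigma$ and $\eta$, which touch only the $A$-factors), it commutes with the right $B$-action coming from $Y$; hence $e$, and with it the splitting maps $p$, $i$ and the morphisms $\rho\tensor1$, $1\tensor\lambda$, are $B$-linear, and $X\tensor_A Y=\im(e)$ inherits a right $B$-module structure. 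Dually, the left $A$-action $\lambda\colon A\tensor Y\to Y$ is $B$-linear and therefore induces, via functoriality of $[-,Z]_B$ and the above adjunction, a morphism $\kappa\colon[Y,Z]_B\tensor A\to[Y,Z]_B$, namely the one adjunct to the composite $[Y,Z]_B\tensor A\tensor Y\xrightarrow{1\tensor\lambda}[Y,Z]_B\tensor Y\xrightarrow{\varepsilon_{Y,Z}}Z$; unitality and associativity of $\lambda$ make $\kappa$ a right $A$-action on $[Y,Z]_B$, and this is the structure intended in the statement.

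Next, because $e$ is a $B$-linear idempotent the split exact triangle $\ker(e)\to X\tensor Y\xrightarrow{p}\im(e)$ is distinguished in $\Modcat_\sfS B$; applying $\Modcat_\sfS B(-,Z)$ and using the equivalence $fe=f\Leftrightarrow f(\rho\tensor1)=f(1\tensor\lambda)$ established in the proof of Lemma~\ref{lem:coequalizer} (valid for $B$-linear $f$, since it holds in $\sfS$ and the forgetful functor is faithful) identifies $\Modcat_\sfS B(X\tensor_A Y,Z)$ with the equalizer of the two maps
\begin{displaymath}
    \Modcat_\sfS B(X\tensor Y,Z)\rightrightarrows\Modcat_\sfS B(X\tensor A\tensor Y,Z)
\end{displaymath}
given by precomposition with $\rho\tensor1_Y$ and with $1_X\tensor\lambda$. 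On the other side, a morphism $\alpha\colon X\to[Y,Z]_B$ in $\sfS$ is $A$-linear precisely when $\alpha\rho=\kappa(\alpha\tensor1_A)$, so $\Modcat_\sfS A(X,[Y,Z]_B)$ is the equalizer of the two maps $\sfS(X,[Y,Z]_B)\rightrightarrows\sfS(X\tensor A,[Y,Z]_B)$ sending $\alpha$ to $\alpha\rho$ and to $\kappa(\alpha\tensor1_A)$.

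It then remains to match the two equalizer diagrams via the adjunction isomorphisms $\Modcat_\sfS B(X\tensor Y,Z)\iso\sfS(X,[Y,Z]_B)$ and $\Modcat_\sfS B(X\tensor A\tensor Y,Z)\iso\sfS(X\tensor A,[Y,Z]_B)$. Naturality of the adjunction in its $\sfS$-argument, applied to $\rho\colon X\tensor A\to X$, identifies precomposition with $\rho\tensor1_Y=\rho\odot1_Y$ with the map $\alpha\mapsto\alpha\rho$. For the remaining pair, the defining identity $\varepsilon_{Y,Z}(\kappa\tensor1_Y)=\varepsilon_{Y,Z}(1\tensor\lambda)$ of $\kappa$ gives, for $\alpha\colon X\to[Y,Z]_B$, that the $B$-linear adjunct of $\kappa(\alpha\tensor1_A)$ equals $\varepsilon_{Y,Z}(\kappa\tensor1_Y)(\alpha\tensor1_A\tensor1_Y)=\varepsilon_{Y,Z}(1\tensor\lambda)(\alpha\tensor1_A\tensor1_Y)=\big(\varepsilon_{Y,Z}(\alpha\tensor1_Y)\big)(1_X\tensor\lambda)$, which is the adjunct of $\alpha$ precomposed with $1_X\tensor\lambda$; hence precomposition with $1_X\tensor\lambda$ corresponds to $\alpha\mapsto\kappa(\alpha\tensor1_A)$. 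The two equalizers thus coincide, proving the isomorphism. Naturality in $X$ and $Z$ is immediate, since every step is functorial in $X\in\Modcat_\sfS A$ and $Z\in\Modcat_\sfS B$; naturality in $Y$ follows because $e$, hence $X\tensor_A Y$, is covariantly functorial in the bimodule $Y$ while $[Y,Z]_B$ is contravariantly functorial in $Y$, compatibly with all the identifications above. I expect the main obstacle to be bookkeeping rather than anything deep: the delicate point is to pin down the right $A$-action $\kappa$ on $[Y,Z]_B$ precisely and to keep careful track of which adjunction is being invoked in the last step, after which the matching of equalizers is purely formal.
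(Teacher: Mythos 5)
Your argument is correct and rests on exactly the same ingredients as the paper's proof: the description of $X\tensor_A Y$ via the idempotent $e$ and Lemma~\ref{lem:coequalizer} (in particular the equivalence $fe=f\Leftrightarrow f(\rho\tensor 1)=f(1\tensor\lambda)$), the right $A$-action on $[Y,Z]_B$ adjunct to $\varepsilon_{Y,Z}(1\tensor\lambda_Y)$, and the unwinding of adjuncts under the copower adjunction. The only difference is organizational: the paper writes down the mutually inverse assignments $f\mapsto (fp)^\flat$ and $g\mapsto g^\sharp i$ and verifies $A$-linearity of the former and $g^\sharp e=g^\sharp$ directly, whereas you identify both hom sets as equalizers of the corresponding parallel pairs and match the two diagrams under the adjunction --- the same computations in a symmetric packaging.
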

\begin{proof}
    It is clear that $X\otimes_A Y$ is a right $B$-module. We begin by showing $[Y,Z]_B$ is indeed a right $A$-module. The left $A$-module structure of $Y$ produces a morphism in $\Modcat B$
    \begin{displaymath}
        [Y,Z]_B \tensor A \tensor Y \xto{1\tensor\lambda_Y} [Y,Z]_B \tensor Y \xto{\varepsilon_{Y,Z}} Z,
    \end{displaymath}
    which by adjunction yields the right $A$-module structure on $[Y,Z]_B$
    \begin{displaymath}
        [Y,Z]_B \tensor A \xto{\big(\varepsilon_{Y,Z}(1\tensor\lambda_Y)\big)^\flat} [Y,Z]_B.
    \end{displaymath}

    Fix a morphism $f\colon X\tensor_A Y \to Z$ in $\Modcat_\sfS B$.  Precomposing with the split epimorphism $p\colon X\tensor Y \to X\tensor_A Y$ we obtain a morphism
    \begin{displaymath}
        X \xto{(fp)^\flat} [Y,Z]_B
    \end{displaymath}
    in $\sfS$.  Showing that this is a morphism in $\Modcat_\sfS A$ amounts to showing the commutativity of
    \begin{displaymath}
        \commd{
            \node (1) at (0,2) {$X\tensor A$};
            \node (2) at (5,2) {$[Y,Z]_B\tensor A$};
            \node (3) at (0,0) {$X$};
            \node (4) at (5,0) {$[Y,Z]_B$.};
        }{
            (1) edge node[auto] {$(fp)^\flat\tensor1$} (2)
            (1) edge node[auto] {$\rho_X$} (3)
            (3) edge node[auto] {$(fp)^\flat$} (4)
            (2) edge node[auto] {$\big(\varepsilon_{Y,Z}(1\tensor\lambda_Y)\big)^\flat$} (4)
        }
    \end{displaymath}
    Taking left adjuncts reduces this to the following computation
    \begin{align*}
        \varepsilon(1\tensor\lambda_Y)\big((fp)^\flat\tensor1^{\tensor2}\big)
            &= \varepsilon\big((fp)^\flat\tensor1\big)(1\tensor\lambda_Y) \\
            &= fp(1\tensor\lambda_Y) \\
            &= fp(\rho_X\tensor1),
    \end{align*}
    where Lemma \ref{lem:coequalizer} yields the last equality.  It follows that the assignment $f \mapsto (fp)^\flat$ yields a morphism of $A$-modules.

    On the other hand, starting with a morphism $g\colon X \to [Y,Z]_B$ in $\Modcat_\sfS A$ it is clear that
    \begin{displaymath}
        X\tensor_A Y \stackrel i\to X\tensor Y \stackrel{g^\sharp}\to Z
    \end{displaymath}
    is a morphism of $B$-modules.
    
    We claim that these assignments are mutually inverse.  In one direction, we have
    \begin{displaymath}
        (fp)^{\flat\sharp}i = fpi = f,
    \end{displaymath}
    since $pi = 1$.  Lastly we show that
    \begin{displaymath}
        (g^\sharp ip)^\flat = (g^\sharp e)^\flat
    \end{displaymath}
    equals $g$, or equivalently that $g^\sharp e = g^\sharp$.  By Lemma \ref{lem:coequalizer} it suffices to show that $g^\sharp(1\tensor\lambda_Y) = g^\sharp(\rho_X\tensor1)$.  The commutativity of
    \begin{displaymath}
        \commd{
            \node (1) at (0,6) {$X\tensor A\tensor Y$};
            \node (2) at (8,6) {$X\tensor Y$};
            \node (3) at (4,4) {$[Y,Z]_B\tensor A\tensor Y$};
            \node (4) at (12,4) {$[Y,Z]_B\tensor Y$};
            \node (5) at (0,2) {$X\tensor Y$};
            \node (6) at (4,0) {$[Y,Z]_B\tensor Y$};
            \node (7) at (12,0) {$Z$};
        }{
            (1) edge node[auto] {$\rho_X\tensor1$} (2)
            (1) edge node[auto] {$g\tensor1^{\tensor2}$} (3)
            (2) edge node[auto] {$g\tensor1$} (4)
            (3) edge node[auto] {$\big(\varepsilon_{Y,Z}(1\tensor\lambda_Y)\big)^\flat\tensor1$} (4)
            (1) edge node[auto] {$1\tensor\lambda_Y$} (5)
            (3) edge node[auto] {$1\tensor\lambda_Y$} (6)
            (4) edge node[auto] {$\varepsilon_{Y,Z}$} (7)
            (6) edge node[auto] {$\varepsilon_{Y,Z}$} (7)
            (5) edge node[auto] {$g\tensor1$} (6)
        }
    \end{displaymath}
    yields this equality and so completes the argument.
\end{proof}

\subsection{A triangulated Eilenberg--Watts theorem}
We now prove one direction of the main result; Theorem~\ref{thm:tensorrep} below.
\begin{prop} \label{prop:tensorY}
    Let $A$ and $B$ be separable monoids in $\sfS$ and $Y$ an $A$-$B$-bimodule.  Then
    \begin{displaymath}
        -\tensor_A Y\colon \cModcat_\sfS A \to \cModcat_\sfS B
    \end{displaymath}
    is a copower preserving $\sfS$-functor.

    Moreover, the underlying functor is exact and preserves coproducts.
\end{prop}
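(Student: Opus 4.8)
The plan is to dispatch the ``moreover'' clause first, since Proposition~\ref{prop:action-enr-fun} takes an exact, coproduct preserving functor as its input, and then to extract the $\sfS$-functor statement from that proposition. Throughout we use that $X \tensor_A Y = \im(e_X)$ is the image of the idempotent $e_X$ on $X \tensor Y$, that under $U_B$ this image is a summand of $U_A X \tensor Y$, and that (by Balmer's criterion) a triangle in $\Modcat_\sfS B$ is distinguished precisely when its $U_B$-image is distinguished in $\sfS$.

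For the underlying functor $-\tensor_A Y \colon \Modcat_\sfS A \to \Modcat_\sfS B$, I would first record that $e_X$ is natural in the $A$-module $X$: for $f\colon X\to X'$ in $\Modcat_\sfS A$ one has $(f\tensor 1_Y) e_X = e_{X'} (f \tensor 1_Y)$, using the $A$-linearity of $f$ and the interchange law for the other structure maps defining $e$. Hence $X\mapsto \im(e_X)$ is an additive functor (sending $f$ to $p_{X'}(f\tensor 1_Y) i_X$); it takes values in right $B$-modules since the $B$-action only touches the $Y$-slot, and in fact $e_X$ is $B$-linear so the splitting already lives in $\Modcat_\sfS B$. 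Given a distinguished triangle $X\to X'\to X''\to \S X$ in $\Modcat_\sfS A$, tensoring with $Y$ over $\sfS$ yields a distinguished triangle in $\sfS$, and $(e_X, e_{X'}, e_{X''})$ is an idempotent endomorphism of it by the naturality just noted. Since $\sfS$, being compactly generated, is idempotent complete, a distinguished triangle equipped with an idempotent self-map splits as the direct sum of its ``image'' triangle $\im(e_X)\to\im(e_{X'})\to\im(e_{X''})\to\S\im(e_X)$ and the complementary triangle, both of which are distinguished. As $\S e_X$ is identified with $e_{\S X}$ (giving the $\S$-compatibility isomorphism of the functor), this image triangle is exactly $-\tensor_A Y$ applied to the original, now distinguished after $U_B$; so $-\tensor_A Y$ is exact. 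Coproduct preservation is analogous: coproducts in the module categories are computed in $\sfS$, one has $e_{\coprod_i X_i} = \coprod_i e_{X_i}$ under the evident identification, and the additive functor $\coprod_i(-)$ commutes with splitting idempotents, whence $\coprod_i(X_i\tensor_A Y) = \im(\coprod_i e_{X_i}) = (\coprod_i X_i)\tensor_A Y$.

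For the $\sfS$-functor statement I would invoke Proposition~\ref{prop:action-enr-fun}. The canonical actions $s\tensor(x,\rho) = (s\tensor x, 1\tensor \rho)$ are exact and coproduct preserving in each variable and, via Lemma~\ref{lem:action-enr}, induce precisely the enrichments $\cModcat_\sfS A$ and $\cModcat_\sfS B$. By the previous paragraph and Proposition~\ref{prop:action-enr-fun} it then suffices to produce natural isomorphisms $\gamma_{s,X}\colon s\tensor(X\tensor_A Y)\to (s\tensor X)\tensor_A Y$ satisfying the unitor and cocycle conditions. But under the identification $(s\tensor X)\tensor Y\iso s\tensor(X\tensor Y)$ one has $e_{s\tensor X} = 1_s\tensor e_X$, because the $A$-action on $s\tensor X$ is $1_s\tensor\rho_X$; and since the exact functor $s\tensor(-)$ carries the splitting of $e_X$ to a splitting of $1_s\tensor e_X$, uniqueness of idempotent splittings gives $s\tensor\im(e_X)\iso\im(e_{s\tensor X})$, which is the desired $\gamma_{s,X}$. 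The unitor and cocycle identities unwind to coherence of the monoidal unit and associativity constraints together with functoriality of $s\tensor(-)$ on splittings, so they hold, and Proposition~\ref{prop:action-enr-fun} delivers a copower preserving $\sfS$-functor $\cModcat_\sfS A\to\cModcat_\sfS B$ with underlying functor $-\tensor_A Y$.

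The only step with genuine content is the exactness in the second paragraph, and within it the fact that the image of an idempotent self-map of a distinguished triangle is again distinguished when the ambient category is idempotent complete; everything else is bookkeeping of structure maps and coherence diagrams. One could also shortcut parts of this by appealing directly to Balmer's results \cite{Balmer_sep} on module categories over separable monoids.
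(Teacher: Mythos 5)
Your proposal is correct in substance, but it reaches the enriched statement by a genuinely different route than the paper. The paper builds the $\sfS$-functor structure by hand: for $A$-modules $M,N$ it constructs $\mcG_{M,N}\colon [M,N]_A \to [M\tensor_A Y, N\tensor_A Y]_B$ directly from the coequalizer presentation of $-\tensor_A Y$ (Lemma~\ref{lem:coequalizer}) together with the copower adjunction, verifies unitality (and sketches compatibility with composition), obtains copower preservation from the canonical isomorphism $s\tensor(M\tensor_A Y)\iso (s\tensor M)\tensor_A Y$ of Lemma~\ref{lem_copower_pres}, and then handles exactness and coproducts exactly as you do, by viewing the underlying functor as a summand of the exact coproduct preserving functor $U_A(-)\tensor Y$ and using that triangles and coproducts in $\Modcat_\sfS A$ and $\Modcat_\sfS B$ are detected in $\sfS$. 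You instead settle the underlying functor first and then invoke Proposition~\ref{prop:action-enr-fun}, feeding it the comparison isomorphisms coming from $e_{s\tensor X} = 1_s\tensor e_X$; this is legitimate, since the enrichments $\cModcat_\sfS A$ and $\cModcat_\sfS B$ are by definition the ones induced by the canonical actions and, under the standing hypothesis that $\unit$ generates $\sfS$, both module categories are compactly generated (Lemma~\ref{lem:Agen}), so Lemma~\ref{lem:action-enr} applies. What your route buys is economy: the enriched structure and copower preservation come for free from the action formalism, and your treatment of the underlying functor is actually more careful than the paper's one-line summand argument, since you isolate the key point that an idempotent endomorphism of a distinguished triangle splits it into distinguished summands (a standard fact in an idempotent complete triangulated category, but one worth citing explicitly, and one the paper also uses implicitly). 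What it costs is that the real content --- constructing the hom-maps and checking the unit and composition axioms --- is outsourced to Proposition~\ref{prop:action-enr-fun}, which the authors state only in sketch form, explicitly call imprecise, and say they will not rely on in the sequel; the paper's direct construction avoids that dependency and moreover produces the maps $\mcG_{M,N}$ explicitly. If you keep your route, you should either fill in the coherence verifications for your $\gamma_{s,X}$ (the unitor and cocycle conditions, which you only assert) or accept that your proof is only as complete as the sketched proposition it rests on.
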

\begin{proof}
    For ease of notation, let us denote this functor-to-be by $\mcG$.  For $A$-modules $M$ and $N$ we first construct a morphism
    \begin{displaymath}
        \mcG_{M,N}\colon [M,N]_A \to [M\tensor_A Y, N\tensor_A Y]_B.
    \end{displaymath}

    Consider the diagram
    \begin{displaymath}
    \commd{
        \node (1) at (0,6) {$[M,N]_A \tensor M \tensor A \tensor Y$};
        \node (2) at (6,6) {$N \tensor A \tensor Y$};
        \node (3) at (0,4) {$[M,N]_A \tensor M \tensor Y$};
        \node (4) at (6,4) {$N \tensor Y$};
        \node (5) at (0,2) {$[M,N]_A \tensor M\tensor_A Y$};
        \node (6) at (6,2) {$N \tensor_A Y$};
    }{
        (1) edge node[auto] {$\varepsilon_{M,N}\tensor1\tensor1$} (2)
        (3) edge node[auto] {$\varepsilon_{M,N}\tensor1$} (4)
        (5) edge[densely dashed] node[auto] {$u_{M,N}$} (6)
        (1) edge node[auto,swap] {$1\tensor\rho\tensor1-1\tensor1\tensor\lambda$} (3)
        (3) edge node[auto] {} (5)
        (2) edge node[auto] {$\rho\tensor1 - 1\tensor\lambda$} (4)
        (4) edge node[auto] {} (6)
    }
    \end{displaymath}
    where the upper square commutes by naturality of the counit and the columns are coequalizers.  Since the composite along the top then right edge is $0$, there is thus a unique morphism of right $B$-modules $u_{M,N}$ making the lower square commute.  From this we obtain the adjunct
    \begin{displaymath}
        \mcG_{M,N} = u_{M,N}^\flat\colon [M,N]_A \to [M\tensor_A Y, N\tensor_A Y]_B.
    \end{displaymath}

    Now assume that $M=N$.  The composition
    \begin{displaymath}
        M \tensor Y \iso \unit\tensor M \tensor Y \xto{1_M^\flat\tensor1\tensor1} [M,M]_A \tensor M \tensor Y \xto{\varepsilon_{M,M} \tensor 1} M \tensor Y
    \end{displaymath}
    is the identity on $M\tensor Y$.  It follows that the composition
    \begin{displaymath}
        M \tensor_A Y \iso \unit\tensor M \tensor_A Y \xto{1_M^\flat\tensor1} [M,M]_A \tensor M \tensor_A Y \xto{u_{M,M}} M \tensor_A Y
    \end{displaymath}
    is the identity on $M\tensor_A Y$.  Consequently, $\mcG$ preserves the unit, i.e.,
    \begin{displaymath}
        \unit \to [M,M]_A \xto{\mcG_{M,M}} [M\tensor_A Y, M\tensor_A Y]_B
    \end{displaymath}
    is the unit $\unit \to [M\tensor_A Y, M\tensor_A Y]_B$.

    One shows that $\mcG$ is compatible with composition in $\cModcat_\sfS A$ and $\cModcat_\sfS B$ by a similar argument.  Thus $\mcG$ is an $\sfS$-functor.

    We now show that this functor preserves copowers.  Recall from Lemma~\ref{lem:action-enr} that both $\cModcat_\sfS A$ and $\cModcat_\sfS B$ are copowered over $\sfS$ so this statement is reasonable.
	Preservation of copowers follows from the fact that associativity of the tensor product in $\sfS$ descends to summands, i.e.,
    \begin{displaymath}
        s\tensor(M\tensor_A Y) \iso (s\tensor M)\tensor_A Y,
    \end{displaymath}
    where this isomorphism is the canonical morphism of Lemma \ref{lem_copower_pres}.

    Lastly, assume that $A$ and $B$ are separable.  The underlying functor $\mcG_0$ is a summand of the exact coproduct preserving functor $-\otimes_AY$.  Triangles (respectively coproducts) in both $\Modcat_\sfS A$ and $\Modcat_\sfS B$ are characterized by being triangles (respectively coproducts) in $\sfS$, and so the result follows from exactness and coproduct preservation of $\tensor$ in $\sfS$.
\end{proof}

We now embark on the proof that the properties of the previous proposition are sufficient to guarantee that the functor is isomorphic to a tensor product over $A$.

\begin{prop} \label{prop:FA-bimod}
    Let $\mcF\colon \cModcat_\sfS A \to \cModcat_\sfS B$ be an $\sfS$-functor.  The object $\mcF A$ is an $A$-$B$-bimodule.
\end{prop}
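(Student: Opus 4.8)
The plan is to equip the right $B$-module $\mcF A$ with a left $A$-action that commutes with its given right $B$-action; the source of the left action will be the enriched endomorphism monoid of the $A$-module $(A,\mu)$, pushed forward along $\mcF$.

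First I would identify the monoid. Since $[A,-]_A$ is the right adjoint of $F_A=-\tensor A$, evaluation at $(A,\mu)$ gives an isomorphism $\mu^\flat\colon A\stackrel{\sim}{\to}[A,A]_A$ in $\sfS$. I claim $\mu^\flat$ is an isomorphism of \emph{monoids}, where $[A,A]_A$ carries the multiplication given by enriched composition $\circ\colon[A,A]_A\tensor[A,A]_A\to[A,A]_A$ and the unit $i_A\colon\unit\to[A,A]_A$. Multiplicativity is exactly the special case $(X,\rho)=(A,\mu)$ of Lemma~\ref{lem:rho-circ}. Unitality, i.e.\ $\mu^\flat\circ\eta = i_A$, follows by passing to left adjuncts: the left adjunct of $\mu^\flat\circ\eta$ is $\mu\circ(\eta\tensor1)=\id$ by the unit axiom for $A$ (using $(\mu^\flat)^\sharp=\mu$), and the left adjunct of $i_A$ is the unitor $\varepsilon_{A,A}(i_A\tensor1)=l_A$, so the two agree.

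Next, since $\mcF$ is an $\sfS$-functor, its component $\mcF_{A,A}\colon[A,A]_A\to[\mcF A,\mcF A]_B$ is compatible with composition and with units (the two $\sfV$-functor axioms), hence is a monoid homomorphism; composing with $\mu^\flat$ yields a monoid map $\theta\colon A\to[\mcF A,\mcF A]_B$. On the other hand, the enrichment $\cModcat_\sfS B$ makes $\mcF A$ a module over its own endomorphism monoid via the counit $\varepsilon_{\mcF A,\mcF A}\colon[\mcF A,\mcF A]_B\tensor\mcF A\to\mcF A$: this action is associative because $\circ$ is, by definition, the adjunct of $\varepsilon_{\mcF A,\mcF A}(1\tensor\varepsilon_{\mcF A,\mcF A})$, and it is unital because $i_{\mcF A}$ is the enriched identity. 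Restricting along $\theta$ I set
\begin{displaymath}
    \lambda \;=\; \varepsilon_{\mcF A,\mcF A}\circ(\theta\tensor1)\colon A\tensor\mcF A\to\mcF A,
\end{displaymath}
and associativity and unitality of $\lambda$ as a left $A$-action are then formal consequences of those of the $\varepsilon_{\mcF A,\mcF A}$-action together with $\theta$ being a monoid map.

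Finally I would check that $\lambda$ is a morphism of right $B$-modules, where $A\tensor\mcF A$ carries the right $B$-action on its $\mcF A$-tensorand; this is precisely the axiom making $\mcF A$ an $A$-$B$-bimodule. This holds because $\lambda$ factors as $\varepsilon_{\mcF A,\mcF A}\circ(\theta\tensor1_{\mcF A})$, where $\theta\tensor1_{\mcF A}$ is the copower functor $-\odot\mcF A$ applied to $\theta$ and hence $B$-linear (it affects only the $\sfS$-tensorand, so commutes with the $B$-action by bifunctoriality of $\tensor$), while $\varepsilon_{\mcF A,\mcF A}$ is a morphism in $\Modcat_\sfS B$ by the construction of the enrichment from the $\sfS$-action (Lemma~\ref{lem:action-enr}): it is the counit of the adjunction $-\odot\mcF A\dashv[\mcF A,-]_B$, both of whose categories are $\Modcat_\sfS B$ and $\sfS$. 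I expect the only genuine subtlety to be the monoid identification of the first step --- once Lemma~\ref{lem:rho-circ} is available, the remainder is bookkeeping with adjuncts and the $\sfS$-functor axioms.
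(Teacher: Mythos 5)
Your proof is correct and is essentially the paper's argument in different packaging: your $\lambda=\varepsilon_{\mcF A,\mcF A}(\theta\tensor 1)$ coincides with the paper's $\lambda_{\mcF A}=\mcF_{A,A}^{\sharp}(\mu^\flat\tensor 1)$, and the verifications rest on the same ingredients --- Lemma~\ref{lem:rho-circ} applied to $(A,\mu)$, the composition and unit axioms for the $\sfS$-functor $\mcF$, and the fact that the counit of $-\odot\mcF A\dashv[\mcF A,-]_B$ is a morphism in $\Modcat_\sfS B$, which gives the compatibility of the two actions. The only (welcome) addition is that you spell out the unit axiom for the left $A$-action, which the paper leaves implicit.
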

\begin{proof}
    As $\mcF A$ is an object of $\Modcat_\sfS B$, it is a right $B$-module via some $\rho_{\mcF A}\colon \mcF A\tensor B \to \mcF A$.  Furthermore, as $\mcF$ is enriched there is a morphism in $\sfS$
    \begin{displaymath}
        \mcF_{A,A}\colon [A,A]_A \to [\mcF A,\mcF A]_B,
    \end{displaymath}
    which in turn gives rise to the morphism
    \begin{displaymath}
        \lambda_{\mcF A}\colon A \tensor \mcF A \xto{\mu^\flat\tensor1} [A,A]_A \tensor \mcF A \xto{\mcF_{A,A}^\sharp} \mcF A,
    \end{displaymath}
    where $\mu$ denotes the multiplication on $A$, in $\Modcat_\sfS B$.  We claim this endows $\mcF A$ with a left $A$-module structure; we need only check the commutativity of the following diagram:
    \begin{displaymath}
    \commd{
        \node (1) at (0,2) {$A\tensor A\tensor \mcF A$};
        \node (2) at (4,2) {$A\tensor \mcF A$};
        \node (3) at (0,0) {$A\tensor \mcF A$};
        \node (4) at (4,0) {$\mcF A$};
    }{
        (1) edge node[auto] {$1\tensor\lambda_{\mcF A}$} (2)
        (1) edge node[auto] {$\mu\tensor1$} (3)
        (2) edge node[auto] {$\lambda_{\mcF A}$} (4)
        (3) edge node[auto] {$\lambda_{\mcF A}$} (4)
    }
    \end{displaymath}
    Via the adjunction, commutativity of this diagram is equivalent to that of
    \begin{displaymath}
    \commd{
        \node (1) at (0,2) {$A\tensor A$};
        \node (2) at (5,2) {$[A,A]_A\tensor[A,A]_A$};
        \node (3) at (12,2) {$[\mcF A,\mcF A]_B\tensor[\mcF A,\mcF A]_B$};
        \node (4) at (0,0) {$A$};
        \node (5) at (5,0) {$[A,A]_A$};
        \node (6) at (12,0) {$[\mcF A,\mcF A]_B.$};
    }{
        (1) edge node[auto] {$\mu^\flat\tensor\mu^\flat$} (2)
        (2) edge node[auto] {$\mcF_{A,A}\tensor\mcF_{A,A}$} (3)
        (4) edge node[auto] {$\mu^\flat$} (5)
        (5) edge node[auto] {$\mcF_{A,A}$} (6)
        (1) edge node[auto] {$\mu$} (4)
        (2) edge node[auto] {$\circ$} (5)
        (3) edge node[auto] {$\circ$} (6)
    }
    \end{displaymath}
    This diagram is readily seen to commute:  the commutativity of the first square is Lemma \ref{lem:rho-circ}, while the second commutes since $\mcF$ is an enriched functor.

    It remains to check that the left and right module structures are compatible, i.e., that
    \begin{displaymath}
    \commd{
        \node (1) at (0,2) {$A\tensor \mcF A \tensor B$};
        \node (2) at (4,2) {$\mcF A\tensor B$};
        \node (3) at (0,0) {$A\tensor \mcF A$};
        \node (4) at (4,0) {$\mcF A$};
    }{
        (1) edge node[auto] {$\lambda_{\mcF A}\tensor1$} (2)
        (1) edge node[auto] {$1\tensor\rho_{\mcF A}$} (3)
        (2) edge node[auto] {$\rho_{\mcF A}$} (4)
        (3) edge node[auto] {$\lambda_{\mcF A}$} (4)
    }
    \end{displaymath}
    commutes.  This is precisely the statement that $\lambda$ is a morphism in $\Modcat_\sfS B$, which is true by construction.  Hence $\mcF A$ is an $A$-$B$-bimodule as claimed.
\end{proof}

Let $\mcF\colon \cModcat_\sfS A \to \cModcat_\sfS B$ be an $\sfS$-functor.  In order to prove the theorem, we must first exhibit an enriched natural transformation
\begin{displaymath}
    \a\colon -\tensor_A \mcF A \to \mcF.
\end{displaymath}
The next two lemmas dispose of this task.

\begin{lem} \label{lem:compare}
    Let $(M,\rho_M)$ be a right $A$-module.  There is a canonical morphism $M\tensor\mcF A \to \mcF M$ in $\cModcat_\sfS B$ such that the composite
    \begin{displaymath}
        M \tensor A \tensor \mcF A \xto{\rho_M\tensor1 - 1\tensor\lambda_{\mcF A}} M\tensor\mcF A \to \mcF M
    \end{displaymath}
    is zero.
\end{lem}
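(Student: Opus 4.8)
The plan is to read the canonical morphism off the enrichment of $\mcF$ and then verify the vanishing by an adjunction chase. For a right $A$-module $(M,\rho_M)$ the structure map $\rho_M\colon M\tensor A\to M$ is $A$-linear, so it has a right adjunct $\rho_M^\flat\colon M\to[A,M]_A$; this is precisely the component at $M$ of the isomorphism $U_A\iso[A,-]_A$ recalled above. Applying $\mcF$ gives $\mcF_{A,M}\colon[A,M]_A\to[\mcF A,\mcF M]_B$ in $\sfS$, and I would define the desired morphism $\tau_M\colon M\tensor\mcF A\to\mcF M$ to be the left adjunct of the composite $M\xto{\rho_M^\flat}[A,M]_A\xto{\mcF_{A,M}}[\mcF A,\mcF M]_B$; explicitly $\tau_M=\varepsilon_{\mcF A,\mcF M}(\mcF_{A,M}\rho_M^\flat\tensor1_{\mcF A})$. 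That $\tau_M$ is a morphism in $\cModcat_\sfS B$ is immediate, since $\varepsilon_{\mcF A,\mcF M}$ is the counit of $-\tensor\mcF A\dashv[\mcF A,-]_B$ and hence $B$-linear, while the remaining factor has the form $(-)\tensor1_{\mcF A}$ and the right $B$-action on each of $M\tensor\mcF A$ and $[\mcF A,\mcF M]_B\tensor\mcF A$ is carried entirely by the copied factor $\mcF A$.

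The content is then the identity $\tau_M(\rho_M\tensor1_{\mcF A})=\tau_M(1_M\tensor\lambda_{\mcF A})$ of maps $M\tensor A\tensor\mcF A\to\mcF M$, where $\lambda_{\mcF A}=\mcF_{A,A}^\sharp(\mu^\flat\tensor1)$ is the left $A$-action on $\mcF A$ constructed in Proposition~\ref{prop:FA-bimod}. Starting from the left side, $\tau_M(\rho_M\tensor1)=\varepsilon_{\mcF A,\mcF M}\big(\mcF_{A,M}(\rho_M^\flat\rho_M)\tensor1\big)$, and I would then run through the following replacements: first Lemma~\ref{lem:rho-circ} gives $\rho_M^\flat\rho_M=\circ_A(\rho_M^\flat\tensor\mu^\flat)$; then compatibility of $\mcF_{-,-}$ with composition replaces $\mcF_{A,M}\circ_A$ by $\circ_B(\mcF_{A,M}\tensor\mcF_{A,A})$; then the defining property of $\circ_B$ in $\cModcat_\sfS B$ (that its left adjunct is $\varepsilon_{\mcF A,\mcF M}(1\tensor\varepsilon_{\mcF A,\mcF A})$) turns $\varepsilon_{\mcF A,\mcF M}(\circ_B\tensor1)$ into $\varepsilon_{\mcF A,\mcF M}(1\tensor\varepsilon_{\mcF A,\mcF A})$; finally one recognises $\varepsilon_{\mcF A,\mcF A}(\mcF_{A,A}\tensor1)=\mcF_{A,A}^\sharp$ in the factor involving $[A,A]_A$ and $A$, and the definition of $\lambda_{\mcF A}$ reassembles the whole expression as $\varepsilon_{\mcF A,\mcF M}(\mcF_{A,M}\rho_M^\flat\tensor1)(1_M\tensor\lambda_{\mcF A})=\tau_M(1_M\tensor\lambda_{\mcF A})$, which is the right side.

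All of this is bookkeeping rather than anything conceptual; the one place to be careful is keeping track of which counit $\varepsilon$ and which tensor factor a given map acts on, and I would organise the computation via left adjuncts $(-)^\sharp$, exactly as in the proof of Lemma~\ref{lem:rho-circ}, to keep that hazard in check. Once $\tau_M$ has been produced with the stated vanishing property, Lemma~\ref{lem:coequalizer} applies to factor it through the coequalizer $M\tensor\mcF A\to M\tensor_A\mcF A$, and this factorization is what will supply the component at $M$ of the natural transformation $\a\colon-\tensor_A\mcF A\to\mcF$ assembled in the next step.
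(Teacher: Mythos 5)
Your proposal is correct and follows essentially the same route as the paper: you define the same morphism $\mcF_{A,M}^\sharp(\rho_M^\flat\tensor 1)$ and verify the vanishing using exactly the paper's three ingredients (Lemma~\ref{lem:rho-circ}, compatibility of $\mcF$ with composition, and the counit description of $\circ$ together with the definition of $\lambda_{\mcF A}$), merely written as an equational adjunct computation instead of the paper's pair of commutative diagrams. The explicit remark on $B$-linearity of the comparison map is a small bonus the paper leaves implicit.
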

\begin{proof}
    The morphism is given as the composite
    \begin{displaymath}
        M\tensor\mcF A \xto{\rho_M^\flat\tensor1} [A,M]_A \tensor \mcF A \xto{\mcF_{A,M}^\sharp} \mcF M,
    \end{displaymath}
    and we have previously (see Lemma~\ref{lem:rho-circ}) established the commutativity of the left square in the following diagram:
    \begin{displaymath}
    \commd{
        \node (1) at (0,2) {$M \tensor A \tensor \mcF A$};
        \node (2) at (8,2) {$M\tensor\mcF A $};
        \node (3) at (12,2) {$\mcF M$};
        \node (4) at (0,0) {$[A,M]_A \tensor [A,A]_A \tensor \mcF A$};
        \node (5) at (8,0) {$[A,M]_A \tensor\mcF A$};
        \node (6) at (12,0) {$\mcF M$};
    }{
        (1) edge node[auto] {$\rho_M\tensor1 - 1\tensor\lambda_{\mcF A}$} (2)
        (2) edge node[auto] {} (3)
        (4) edge node[auto] {$\circ\tensor1 - 1\tensor\mcF_{A,A}^\sharp$} (5)
        (5) edge node[auto] {$\mcF_{A,M}^\sharp$} (6)
        (1) edge node[auto] {$\rho_M^\flat\tensor\mu^\flat\tensor1$} (4)
        (2) edge node[auto] {$\rho_M^\flat\tensor1$} (5)
        (3) edge[thick,double distance=2pt,-] node[auto] {} (6)
    }
    \end{displaymath}
    in which the vertical arrows are all isomorphisms. It is enough to show that the composite of the two morphisms in the bottom row is zero (in fact going down and then along the bottom row is the map we want on the nose).  This follows from following commutative diagram
    \begin{displaymath}
    \commd{
        \node (1) at (0,4) {$[A,M]_A \tensor [A,A]_A \tensor \mcF A$};
        \node (2) at (0,2) {$[\mcF A,\mcF M]_B \tensor [\mcF A,\mcF A]_B \tensor \mcF A$};
        \node (3) at (0,0) {$[\mcF A,\mcF M]_B \tensor \mcF A$};
        \node (4) at (8,4) {$[A,M]_A \tensor \mcF A$};
        \node (5) at (8,2) {$[\mcF A,\mcF M]_B \tensor \mcF A$};
        \node (6) at (8,0) {$\mcF M$};
    }{
        (1) edge node[auto] {$\mcF_{A,M}\tensor\mcF_{A,A}\tensor1$} (2)
        (2) edge node[auto] {$1\tensor\varepsilon_{\mcF A}$} (3)
        (4) edge node[auto] {$\mcF_{A,M}\tensor1$} (5)
        (5) edge node[auto] {$\varepsilon_{\mcF A}$} (6)
        (1) edge node[auto] {$\circ\tensor1$} (4)
        (2) edge node[auto] {$\circ\tensor1$} (5)
        (3) edge node[auto] {$\varepsilon_{\mcF A}$} (6)
    }
    \end{displaymath}
    which shows that $\mcF_{A,M}^\sharp(1\tensor\mcF_{A,A}^\sharp)$ equals $\mcF_{A,M}^\sharp(\circ\tensor1)$.
\end{proof}

By the construction of $M\tensor_A \mcF A$ (as a cokernel) there is therefore a unique factorization in $\Modcat_\sfS B$ of $M\tensor \mcF A \to \mcF M$ via a morphism
\begin{displaymath}
    \a_M\colon M\tensor_A \mcF A \to \mcF M.
\end{displaymath}

\begin{lem}\label{lem:wattsnat}
    The $\a_M$ are the components of an enriched natural transformation
    \begin{displaymath}
        \a \colon -\otimes_A \mcF A \to \mcF.
    \end{displaymath}
\end{lem}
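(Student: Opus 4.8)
The task is to verify the hexagon identity making the family $(\a_M)$ an $\sfS$-natural transformation; throughout write $\mcG := -\tensor_A\mcF A$, so that by Proposition~\ref{prop:tensorY} one has $\mcG_{M,N} = u_{M,N}^\flat$. The plan is to strip this identity down, using the various copower--hom adjunctions at our disposal, until it reduces to something already proved. First I would pass the hexagon --- a priori an equality of two morphisms $[M,N]_A \to [\mcG M,\mcF N]_B$ in $\sfS$ --- through the adjunction isomorphism $\sfS\big([M,N]_A,[\mcG M,\mcF N]_B\big) \iso \Modcat_\sfS B\big([M,N]_A\tensor\mcG M,\mcF N\big)$, unwind the definition of the composition maps of $\cModcat_\sfS B$ as right adjuncts of counit composites, and use $\mcG_{M,N}^\sharp = u_{M,N}$ together with the identification of $\a_M\colon\unit\to[\mcG M,\mcF M]_B$ with its underlying morphism $\mcG M\to\mcF M$. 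The hexagon then becomes the single equation
\begin{displaymath}
    \a_N\,u_{M,N} = \mcF_{M,N}^\sharp\,(1_{[M,N]_A}\tensor\a_M)\colon [M,N]_A\tensor(M\tensor_A\mcF A)\to\mcF N
\end{displaymath}
of morphisms of right $B$-modules, equivalently of morphisms in $\sfS$.

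Since the splitting $p\colon M\tensor\mcF A\to M\tensor_A\mcF A$ is a split epimorphism, so is $1_{[M,N]_A}\tensor p$, and hence it suffices to check the displayed equation after precomposing with it. Here I would feed in the defining property of $u_{M,N}$, namely $u_{M,N}\,(1\tensor p) = p\,(\varepsilon_{M,N}\tensor 1_{\mcF A})$ from the diagram in Proposition~\ref{prop:tensorY}, together with the factorisations $\a_M\,p = \mcF_{A,M}^\sharp\,(\rho_M^\flat\tensor 1_{\mcF A})$ and $\a_N\,p = \mcF_{A,N}^\sharp\,(\rho_N^\flat\tensor 1_{\mcF A})$, which come from Lemma~\ref{lem:compare} and the cokernel property used to define the $\a_M$. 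This reduces the problem to the equality
\begin{displaymath}
    \mcF_{A,N}^\sharp\big((\rho_N^\flat\,\varepsilon_{M,N})\tensor 1_{\mcF A}\big) = \mcF_{M,N}^\sharp\,(1_{[M,N]_A}\tensor\mcF_{A,M}^\sharp)\,(1_{[M,N]_A}\tensor\rho_M^\flat\tensor 1_{\mcF A})
\end{displaymath}
of morphisms $[M,N]_A\tensor M\tensor\mcF A\to\mcF N$.

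The next step is to invoke that $\mcF$ is an $\sfS$-functor. Passing the square expressing compatibility of $\mcF$ with composition, for the triple $A,M,N$ of objects, through the adjunction obtained by copowering with the $B$-module $\mcF A$ and composing with $\varepsilon_{\mcF A,\mcF N}$, one gets $\mcF_{M,N}^\sharp\,(1_{[M,N]_A}\tensor\mcF_{A,M}^\sharp) = \mcF_{A,N}^\sharp\,(\circ\tensor 1_{\mcF A})$, where $\circ\colon[M,N]_A\tensor[A,M]_A\to[A,N]_A$ denotes composition in $\cModcat_\sfS A$. After this rewriting, both sides of the previous display arise by applying $\mcF_{A,N}^\sharp(-\tensor 1_{\mcF A})$, so it is enough to prove
\begin{displaymath}
    \rho_N^\flat\,\varepsilon_{M,N} = \circ\,(1_{[M,N]_A}\tensor\rho_M^\flat)\colon [M,N]_A\tensor M\to[A,N]_A.
\end{displaymath}
Taking left adjuncts here --- copowering with the $A$-module $A$ and composing with $\varepsilon_{A,N}$ --- and unwinding the definitions of $\rho_M^\flat$, $\rho_N^\flat$ and of the composition map as a right adjunct of a counit composite, this last equation becomes exactly $\rho_N\,(\varepsilon_{M,N}\tensor 1_A) = \varepsilon_{M,N}\,(1_{[M,N]_A}\tensor\rho_M)$. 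This says precisely that the counit $\varepsilon_{M,N}\colon[M,N]_A\tensor M\to N$ is $A$-linear for the copower $A$-action $1\tensor\rho_M$ on its source, which holds by construction of the enrichment: $\varepsilon_{M,N}$ is the counit of the adjunction $(-\odot M)\dashv[M,-]_A$ between $\sfS$ and $\Modcat_\sfS A$, hence a morphism in $\Modcat_\sfS A$.

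The only genuine difficulty in executing this is the bookkeeping: one has to juggle three different copower--hom adjunctions --- over the $A$-module $A$, over the $B$-module $\mcF A$, and over the copower object $M$ --- keep the $\sharp$/$\flat$ conventions straight, and track which tensor factor carries the module structure at each stage. Once this dictionary is fixed, every individual identity used above is immediate, being either a definition or a result already established in this section.
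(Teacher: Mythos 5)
Your proof is correct and follows essentially the same route as the paper: both pass the naturality hexagon through the copower--hom adjunction to the square $\a_N\,(\varepsilon_{M,N}\otimes_A 1)=\mcF^\sharp_{M,N}(1\otimes\a_M)$, reduce to the plain tensor product via the splitting of $e$ (you precompose with the split epimorphism $1\otimes p$, the paper factors through the split monomorphism $i$, which is the same reduction), and then conclude from the compatibility of $\mcF$ with composition together with the $A$-linearity (naturality) of the counit $\varepsilon_{M,N}$. The only differences are presentational, so your argument matches the paper's proof.
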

\begin{proof}
    Recall from Section~\ref{sec:prelim} that naturality of $\a$ is expressed by the commutativity of the diagram
    \begin{displaymath}
    \commd{
        \node (1) at (2,0) {$[M,N]_A\otimes \unit$};
        \node (2) at (12,0) {$[\mcF M, \mcF N]_B\otimes [M\otimes_A \mcF A, \mcF M]_B$};
        \node (3) at (1,2) {$[M,N]_A$};
        \node (4) at (13,2) {$[M\otimes_A \mcF A, \mcF N]_B$};
        \node (5) at (2,4) {$\unit\otimes[M,N]_A$};
        \node (6) at (12,4) {$[N\otimes \mcF A, \mcF N]_B \otimes [M\otimes_A \mcF A, N\otimes_A \mcF A]_B$};
    }{
        (3) edge node[auto] {} (1)
        (3) edge node[auto] {} (5)
        (1) edge node[auto,swap] {$\mcF_{M,N}\otimes\a_M^\flat$} (2)
        (5) edge node[auto] {$\a_N^\flat\otimes (-\otimes_A\mcF A)_{M,N}$} (6)
        (2) edge node[auto,swap] {$\circ$} (4)
        (6) edge node[auto] {$\circ$} (4)
    }
    \end{displaymath}
    for all $M,N \in \cModcat_\sfS A$.  Since we have a better grasp on the $\a_M$s than we have on their adjuncts, it is convenient to rewrite this diagram.  Via the adjunction, naturality can also be expressed by the commutativity of
    \begin{displaymath}
        \commd{
            \node (1) at (0,2) {$[M,N]_A \otimes M\otimes_A \mcF A$};
            \node (2) at (0,0) {$[M,N]_A\otimes \mcF M$};
            \node (3) at (6,2) {$N\otimes_A \mcF A$};
            \node (4) at (6,0) {$\mcF N$};
        }{
            (1) edge node[auto] {$1 \otimes \a_M$} (2)
            (1) edge node[auto] {$\varepsilon_{M,N}\otimes_A 1$} (3)
            (2) edge node[auto] {$\mcF^\sharp_{M,N}$} (4)
            (3) edge node[auto] {$\a_N$} (4)
        }
    \end{displaymath}
    To see this commutes consider the following expanded diagram:
    \begin{displaymath}
        \commd{
            \node (7) at (0,6) {$[M,N]_A \otimes M\otimes_A \mcF A$};
            \node (8) at (8,6) {$N\otimes_A \mcF A$};
            \node (1) at (0,4) {$[M,N]_A \otimes M\otimes \mcF A$};
            \node (2) at (0,2) {$[M,N]_A \otimes [A,M]_A\otimes \mcF A$};
            \node (3) at (0,0) {$[M,N]_A\otimes \mcF M$};
            \node (4) at (8,4) {$N\otimes \mcF A$};
            \node (5) at (8,2) {$[A,N]_A \otimes \mcF A$};
            \node (6) at (8,0) {$\mcF N$};
        }{
            (7) edge node[auto] {} (1)
            (8) edge node[auto] {} (4)
            (7) edge node[auto] {$\varepsilon_{M,N}\otimes_A 1$} (8)
            (1) edge node[auto] {$1\tensor\rho_M^\flat\tensor1$} (2)
            (2) edge node[auto] {$1\otimes\mcF^\sharp_{A,M}$} (3)
            (1) edge node[auto] {$\varepsilon_{M,N}\otimes1$} (4)
            (2) edge node[auto] {$\circ\otimes1$} (5)
            (3) edge node[auto] {$\mcF^\sharp_{M,N}$} (6)
            (4) edge node[auto] {$\rho_N^\flat\tensor1$} (5)
            (5) edge node[auto] {$\mcF^\sharp_{A,N}$} (6)
        }
    \end{displaymath}
    The upper two squares commute by naturality of the counit. The adjunct of the bottom square just expresses the fact that $\mcF$ is an enriched functor and so it also commutes.

    Thus the outer rectangle commutes, proving that $\a$ is an enriched natural transformation.
\end{proof}

It remains to show that each $\a_M$ is an isomorphism in $\Modcat_\sfS B$.  The following lemma does most of the work.

\begin{lem}\label{lem:wattsunit}
    For any left $A$-module $(Y,\lambda)$ there is a canonical isomorphism 
		\begin{displaymath}
		A \tensor_A Y \iso Y.
		\end{displaymath}
    Moreoever, the component of $\a$ at $A$,
    \begin{displaymath}
        \a_A \colon A\otimes_A \mcF A \to \mcF A,
    \end{displaymath}
    is precisely this canonical map for $Y= \mcF A$ and hence is an isomorphism.
\end{lem}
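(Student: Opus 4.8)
The plan is to establish the canonical isomorphism $A\tensor_A Y\iso Y$ directly from the coequalizer description of the tensor product over $A$ furnished by Lemma~\ref{lem:coequalizer}, and then to observe that, for $Y=\mcF A$, the component $\a_A$ coincides with this isomorphism by construction. First I would note that, by Lemma~\ref{lem:coequalizer} applied with the right module $A$, the map $p\colon A\tensor Y\to A\tensor_A Y$ is the coequalizer of $\mu\tensor1$ and $1\tensor\lambda$ on $A\tensor A\tensor Y\rightrightarrows A\tensor Y$. The associativity axiom for the left $A$-module $Y$ says precisely that $\lambda\colon A\tensor Y\to Y$ coequalizes this pair, so $\lambda$ factors uniquely as $\lambda=\bar\lambda p$ for a morphism $\bar\lambda\colon A\tensor_A Y\to Y$.

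The claim is then that $\bar\lambda$ is invertible, with inverse
\begin{displaymath}
    s\colon Y\iso\unit\tensor Y\xto{\eta\tensor1}A\tensor Y\xto{p}A\tensor_A Y.
\end{displaymath}
One composite is immediate: $\bar\lambda s=\lambda(\eta\tensor1)=1_Y$ by the unit axiom of the module. For the other, since $p$ is an epimorphism it is enough to show $s\bar\lambda p=p$, i.e.\ $p\,(\eta\tensor1)\,\lambda=p$. To see this, let $\iota\colon A\tensor Y\iso\unit\tensor A\tensor Y\xto{\eta\tensor1\tensor1}A\tensor A\tensor Y$ insert the monoid unit into the first tensor factor; then $(\mu\tensor1)\,\iota=1_{A\tensor Y}$ by the left unit axiom for $A$, while $(1\tensor\lambda)\,\iota=(\eta\tensor1)\,\lambda$ by naturality of the unit constraint. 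Composing the coequalizer relation $p(\mu\tensor1)=p(1\tensor\lambda)$ with $\iota$ on the right then yields $p=p\,(\eta\tensor1)\,\lambda$, as required. That $\bar\lambda$ is compatible with whatever module structure is in play (the left $A$-action, and additionally a right $B$-action when $Y$ is an $A$-$B$-bimodule, as for $Y=\mcF A$) is a routine verification which I would suppress.

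For the identification of $\a_A$, I would trace through Lemma~\ref{lem:compare} and the paragraph following it: for a right $A$-module $M$, the morphism $M\tensor\mcF A\to\mcF M$ used to define $\a_M$ is $\mcF_{A,M}^\sharp(\rho_M^\flat\tensor1)$, and $\a_M$ is its unique factorization through the coequalizer $p\colon M\tensor\mcF A\to M\tensor_A\mcF A$ of $\rho_M\tensor1$ and $1\tensor\lambda_{\mcF A}$. Specialising to $M=A$, so that $\rho_A=\mu$, this morphism is $\mcF_{A,A}^\sharp(\mu^\flat\tensor1)$, which is by definition exactly the map $\lambda_{\mcF A}\colon A\tensor\mcF A\to\mcF A$ giving the left $A$-module structure on $\mcF A$ in Proposition~\ref{prop:FA-bimod}. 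Hence $\a_A$ is the unique factorization of $\lambda_{\mcF A}$ through $p$, that is, $\a_A$ is precisely the map denoted $\bar\lambda$ above with $Y=\mcF A$ and $\lambda=\lambda_{\mcF A}$, and therefore it is an isomorphism.

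The computation is largely bookkeeping; the one place that needs genuine care is the identity $s\bar\lambda=1$, which rests on recognising $\iota$ as a section of $\mu\tensor1$ for which $(1\tensor\lambda)\,\iota=(\eta\tensor1)\,\lambda$, and then feeding this into the coequalizer relation. Everything else is tracking adjuncts.
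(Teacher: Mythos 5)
Your proof is correct and follows essentially the same route as the paper: the mutually inverse maps $\bar\lambda$ and $p(\eta\otimes 1)$ are exactly the paper's $\bar\lambda$ and $\bar\eta$, and the identification $\a_A=\bar\lambda$ via $\mcF_{A,A}^\sharp(\mu^\flat\otimes 1)=\lambda_{\mcF A}$ is the paper's concluding observation. The only (cosmetic) difference is that you verify invertibility purely from the coequalizer description of Lemma~\ref{lem:coequalizer}, which makes $\bar\lambda\circ p(\eta\otimes1)=\lambda(\eta\otimes1)=1$ immediate, whereas the paper computes directly with the idempotent $e$ and its separability expansion.
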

\begin{proof}
    Consider the morphisms
    \begin{displaymath}
        \bar\lambda\colon A\tensor_A Y \stackrel i\to A\tensor Y \stackrel\lambda\to Y
    \end{displaymath}
    and
    \begin{displaymath}
        \bar\eta\colon Y \stackrel \sim\to \unit\otimes Y \xto{\eta\tensor1} A\tensor Y \stackrel p\to A\tensor_A Y,
    \end{displaymath}
    where the notation is as in the definition of $\otimes_A$ (see the diagram before Lemma~\ref{lem:coequalizer}). Our claim is that $\bar\lambda$ and $\bar\eta$ are inverse isomorphisms. We recall that since $e^2 = e$, we have $e(1\tensor\lambda) = e(\mu\tensor1)$, so that in one direction we have
    \begin{align*}
        i\bar\eta\bar\lambda p &= e(1\tensor\lambda)(\eta\tensor1\tensor1)e \\
            &= e(\mu\tensor1)(\eta\tensor1\tensor1)e \\
            &= e^2 = e = ip.
    \end{align*}
    Since $i$ is a monomorphism and $p$ is an epimorphism, we conclude that $\bar\eta\bar\lambda = 1$.

    Going the other way, we have
    \begin{align*}
        \bar\lambda\bar\eta &= \lambda e(\eta\tensor1) \\
            &= \lambda(\mu\tensor\lambda)(1\tensor\sigma\tensor1)(1\tensor\eta\tensor1)(\eta\tensor1) \\
            &= \lambda(1\tensor\lambda)(\sigma\tensor1)(\mu\tensor1)(1\tensor\eta\tensor1)(\eta\tensor1) \\
            &= \lambda(1\tensor\lambda)(1\tensor\eta\tensor1)(\eta\tensor1) \\
            &= 1,
    \end{align*}
		proving the first part of the statement.

    For the second claim, we simply note that the component of $\a$ at $A$ can be written as
    \begin{displaymath}
        A \tensor_A \mcF A \stackrel i\to A\tensor \mcF A \xto{\lambda_{\mcF A}} \mcF A. \qedhere
    \end{displaymath}
\end{proof}

Up to this point, we have not fully utilized our assumptions on the $\sfS$-functor $\mcF$.  Now, however, we will use that the underlying functor of $\mcF$, which we denote by $\sfF$, is exact and preserves coproducts.  With these hypotheses we can use the standard trick to prove our analog of the Eilenberg--Watts theorem.

\begin{lem}\label{lem:wattsloc}
    The full subcategory
    \begin{displaymath}
        \sfM = \{M\in \Modcat_\sfS A \mid \text{$\a_M$ is an isomorphism}\}
    \end{displaymath}
    is localizing.
\end{lem}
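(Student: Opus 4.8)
The plan is to run the standard argument showing that the locus where a natural transformation between exact, coproduct-preserving functors of triangulated categories is invertible forms a localizing subcategory, exactly in the spirit of the proof of Lemma~\ref{lem:brownloc}.

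First I would record the two functors in play. By Proposition~\ref{prop:tensorY} the underlying functor of $-\tensor_A \mcF A\colon \Modcat_\sfS A \to \Modcat_\sfS B$ is exact and preserves coproducts; by our standing hypothesis on $\mcF$ the underlying functor $\sfF = \mcF_0$ is also exact and preserves coproducts; and by (the underlying natural transformation of) Lemma~\ref{lem:wattsnat} the maps $\a_M$ assemble into a natural transformation between these two functors. Recall also that $\Modcat_\sfS B$ is triangulated by Balmer's theorem and has all coproducts, computed in $\sfS$ via the forgetful functor $U_B$. With this in hand one checks the three closure properties. The subcategory $\sfM$ is full by definition and contains the zero object. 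It is closed under $\S^{\pm 1}$: both functors commute with (de)suspension up to coherent isomorphism and $\a$ is compatible with these identifications, so $\a_{\S M}$ is, up to isomorphism, $\S \a_M$, hence invertible whenever $\a_M$ is. Given a distinguished triangle $M' \to M \to M'' \to \S M'$ in $\Modcat_\sfS A$, applying the two exact functors and using naturality of $\a$ with respect to the three maps of the triangle and the connecting map produces a morphism of distinguished triangles in $\Modcat_\sfS B$ whose vertical components are $\a_{M'}$, $\a_M$, $\a_{M''}$; the triangulated five lemma then shows that if any two of these are isomorphisms, so is the third, so $\sfM$ is a triangulated subcategory. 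Finally, for a family $\{M_i\}$ in $\sfM$, coproduct preservation of both functors together with naturality of $\a$ identifies $\a_{\coprod_i M_i}$ with $\coprod_i \a_{M_i}$, which is an isomorphism; hence $\sfM$ is closed under coproducts, and therefore localizing.

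This argument is entirely routine, and I do not expect a genuine obstacle: the only input beyond formal nonsense is that both $-\tensor_A \mcF A$ and $\sfF$ really are exact and coproduct-preserving on the underlying categories, which is supplied by Proposition~\ref{prop:tensorY} and the hypotheses on $\mcF$ respectively, together with the fact — already used above — that triangles and coproducts in the module categories of separable monoids are detected in $\sfS$. One could phrase the whole proof in a single sentence (``the usual argument''), but it seems worth spelling out the morphism-of-triangles step since it is where the exactness of $-\tensor_A \mcF A$ from Proposition~\ref{prop:tensorY} is used.
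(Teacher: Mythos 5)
Your argument is correct and is exactly the argument the paper has in mind: the paper's proof simply observes that the underlying transformation of $\a$ is a natural transformation between the exact, coproduct-preserving functors $(-\otimes_A\mcF A)_0$ and $\sfF$ and then invokes ``the usual argument'' as in Lemma~\ref{lem:brownloc}, which is precisely the closure under suspensions, triangles (via the triangulated five lemma), and coproducts that you spell out. No discrepancy to report.
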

\begin{proof}
    The underlying natural transformation of $\a$, whose components are just the $\a_M$, is a natural transformation between the exact coproduct preserving functors $(-\otimes_A \mcF A)_0$ and $\sfF$. Thus, as in Lemma~\ref{lem:brownloc}, $\sfM$ is localizing.
\end{proof}

\begin{lem} \label{lem:Agen}
    Suppose $\sfS$ is a compactly generated tensor triangulated category such that the unit $\unit$ is a compact generator.  Then if $A$ is a separable monoid in $\sfS$, the regular representation $(A,\mu)$ is a compact generator for $\Modcat_\sfS A$.
\end{lem}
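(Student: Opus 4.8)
The plan is to verify the two assertions --- compactness of $(A,\mu)$ and the fact that it generates --- separately, in both cases exploiting the adjunction between extension of scalars $F_A = -\tensor A$ and the forgetful functor $U_A$.

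First I would show that $A$ is compact in $\Modcat_\sfS A$. Coproducts in $\Modcat_\sfS A$ are computed on underlying objects in $\sfS$ (this is recorded in the proof of Proposition~\ref{prop:tensorY}), so $U_A$ preserves coproducts; hence its left adjoint $F_A$ preserves compact objects, since for $c\in\sfS^c$ one has $\Modcat_\sfS A(F_Ac,\coprod_i M_i)\iso\sfS(c,\coprod_i U_AM_i)\iso\coprod_i\sfS(c,U_AM_i)\iso\coprod_i\Modcat_\sfS A(F_Ac,M_i)$. As $\unit$ is compact in $\sfS$ and $A\iso F_A\unit$, the module $A$ is compact.

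For generation I would use the separability hypothesis in the form recalled after the definition: $U_A$ is a separable functor, so the counit $\varepsilon_A\colon F_AU_A\to\Ident_{\Modcat_\sfS A}$ admits a section, and therefore every $A$-module $M$ is a direct summand of $F_AU_AM$. Since $\sfS=\langle\unit\rangle$, the underlying object $U_AM$ lies in the localizing subcategory of $\sfS$ generated by $\unit$. The functor $F_A=-\tensor A$ is exact (exactness of $\tensor$ in $\sfS$ together with Balmer's characterization of triangles in $\Modcat_\sfS A$) and preserves coproducts, so the full subcategory $\{x\in\sfS \mid F_Ax\in\langle A\rangle\}$ is localizing and contains $\unit$, hence equals $\sfS$; thus $F_AU_AM\in\langle A\rangle$. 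Since localizing subcategories are closed under direct summands, $M\in\langle A\rangle$. As $M$ was arbitrary, $\langle A\rangle=\Modcat_\sfS A$, and combined with the previous paragraph this shows $(A,\mu)$ is a compact generator.

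The argument is essentially formal, so I do not anticipate a genuine obstacle; the only points that need a moment's care are that $U_A$ really preserves coproducts (so that compactness of $A$ follows from that of $\unit$) and that one has a section of the counit at the level of functors, both of which are supplied by the facts recalled above. One could instead deduce generation directly from the adjunction isomorphism $\Modcat_\sfS A(\S^nA,M)\iso\sfS(\S^n\unit,U_AM)$ --- vanishing of the left-hand side for all $n$ forces $U_AM=0$, hence $M=0$, because $\unit$ generates $\sfS$ --- but that route additionally requires knowing a priori that $\Modcat_\sfS A$ is compactly generated in order to pass from ``$A$ detects zero objects'' to ``$A$ generates'', a step the summand argument sidesteps.
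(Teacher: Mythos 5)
Your argument is correct, and while the compactness half is essentially the paper's (the paper also notes that coproducts in $\Modcat_\sfS A$ are computed in $\sfS$, so $U_A$ preserves coproducts, and then cites Neeman's theorem that the left adjoint $F_A$ of a coproduct-preserving functor preserves compacts, where you verify the same fact directly from the adjunction), the generation half takes a genuinely different route. The paper argues exactly as in your closing remark: from $\Modcat_\sfS A(\S^i A, M)\cong \sfS(\S^i\unit, U_A M)$ and the fact that $\unit$ generates $\sfS$, any nonzero $M$ receives a nonzero map from some $\S^i A$, i.e.\ the compact object $A$ detects zero objects; the passage from this to $\langle A\rangle=\Modcat_\sfS A$, which is the form in which the lemma is used in the proof of Theorem~\ref{thm:tensorrep}, is left to the standard theory of compactly generated categories. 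Your proof instead invokes separability a second time: the section of the counit $\varepsilon_A$ exhibits every module $M$ as a summand of $F_AU_AM=U_AM\otimes A$, and since $\{x\in\sfS\mid x\otimes A\in\langle A\rangle\}$ is localizing (using that $F_A$ is exact and coproduct preserving, which follows from Balmer's description of triangles and coproducts in $\Modcat_\sfS A$) and contains $\unit$, every $F_AU_AM$, hence every $M$, lies in $\langle A\rangle$. What this buys is just what you say: you obtain $\langle A\rangle=\Modcat_\sfS A$ directly, sidestepping the Neeman-type step from ``compact and detects zero'' to ``generates''; the cost is that your generation step genuinely needs separability, whereas the paper's uses it only through the existence of the triangulation on $\Modcat_\sfS A$ and is slightly shorter.
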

\begin{proof}
Consider the free-forgetful adjunction
\begin{displaymath}
\commd{
        \node (1) at (0,0) {$\sfS$};
        \node (2) at (3,0) {$\Modcat_\sfS A$};
    }{
        ($(1.east)+(0,.1)$) edge node[auto] {$F_A$} ($(2.west)+(0,.1)$)
        ($(2.west)-(0,.1)$) edge node[auto] {$U_A$} ($(1.east)-(0,.1)$)
    }
\end{displaymath}
where $F_A = -\otimes A$ and $U_A$ just forgets the action. Since $U_A$ preserves coproducts the functor $F_A$ sends compacts to compacts by \cite[Theorem~5.1]{NeeGrot}. Thus $F_A(\unit) = (A,\mu)$ is compact in $\Modcat_\sfS A$.

To see that it generates observe that for $M\in \Modcat_\sfS A$ we have isomorphisms
\begin{displaymath}
    \sfS\big(\S^i \unit, U_A(M)\big) \cong \Modcat_\sfS A\big(\S^iF_A(\unit), M\big) \cong \Modcat_\sfS A(\S^iA, M).
\end{displaymath}
If $M$ is non-zero then clearly $U_A(M)$ is also non-zero and so we can find a non-zero morphism $\S^i \unit \to U_A(M)$ for some $i\in \mathbb{Z}$ by the assumption that $\unit$ generates $\sfS$. Using the above isomorphisms we find a non-zero map from $\S^iA \to M$ in $\Modcat_\sfS A$ and so $A$ generates $\Modcat_\sfS A$ as claimed.
\end{proof}

We now come to the main theorem of this section, our (enriched) triangulated version of the Eilenberg--Watts theorem.
\begin{thm} \label{thm:tensorrep}
    Let $\sfS$ be a compactly generated tensor $\infty$-triangulated category such that the tensor unit $\unit$ is a compact generator.	Let $A$ and $B$ be separable monoids in $\sfS$ and let $\mcF$ be an $\sfS$-functor
    \begin{displaymath}
        \mcF\colon \cModcat_\sfS A \to \cModcat_\sfS B.
    \end{displaymath}
    Then $\mcF$ preserves copowers and the underlying functor $\sfF$ is exact and preserves coproducts if and only if there exists an $A$-$B$-bimodule $Y$ such that
    \begin{displaymath}
        \mcF \iso -\tensor_A Y.
    \end{displaymath}

    Furthermore, if this is the case then $Y \cong \mcF A$.
\end{thm}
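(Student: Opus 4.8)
The plan is to assemble the pieces proved above: Proposition~\ref{prop:tensorY} handles one implication, while for the converse I would produce the bimodule $Y=\mcF A$ by Proposition~\ref{prop:FA-bimod}, build the comparison transformation $\a$ out of Lemmas~\ref{lem:compare}--\ref{lem:wattsunit}, and then conclude by a standard d\'evissage using Lemmas~\ref{lem:wattsloc} and~\ref{lem:Agen}. The ``if'' direction is immediate: for any $A$-$B$-bimodule $Y$, Proposition~\ref{prop:tensorY} asserts precisely that $-\tensor_A Y\colon\cModcat_\sfS A\to\cModcat_\sfS B$ is a copower preserving $\sfS$-functor whose underlying functor is exact and coproduct preserving, and this already uses separability of $A$ and $B$, which we are assuming.

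For the ``only if'' direction, suppose $\mcF$ preserves copowers and $\sfF$ is exact and coproduct preserving. Set $Y=\mcF A$; this is an $A$-$B$-bimodule by Proposition~\ref{prop:FA-bimod}, so $-\tensor_A\mcF A$ is again an $\sfS$-functor $\cModcat_\sfS A\to\cModcat_\sfS B$ by Proposition~\ref{prop:tensorY}. Lemma~\ref{lem:compare}, together with the universal property of $M\tensor_A\mcF A$ as a cokernel, gives for each right $A$-module $M$ a morphism $\a_M\colon M\tensor_A\mcF A\to\mcF M$ in $\Modcat_\sfS B$, and Lemma~\ref{lem:wattsnat} upgrades the family $(\a_M)$ to an enriched natural transformation $\a\colon -\tensor_A\mcF A\to\mcF$. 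It then suffices to show that every component $\a_M$ is invertible in $\Modcat_\sfS B$, for then $\a$ is an isomorphism of $\sfS$-functors and we may take $Y=\mcF A$.

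The invertibility is exactly where the hypotheses on $\sfF$ enter. Let $\sfM=\{M\in\Modcat_\sfS A\mid\a_M\text{ is an isomorphism}\}$. By Lemma~\ref{lem:wattsunit} the component $\a_A$ is the canonical isomorphism $A\tensor_A\mcF A\iso\mcF A$, so the regular module $(A,\mu)$ lies in $\sfM$; by Lemma~\ref{lem:wattsloc}, which uses that $(-\tensor_A\mcF A)_0$ and $\sfF$ are exact and coproduct preserving, $\sfM$ is localizing; and by Lemma~\ref{lem:Agen}, which uses that $\unit$ compactly generates $\sfS$, the module $(A,\mu)$ is a compact generator of $\Modcat_\sfS A$. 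Hence the only localizing subcategory of $\Modcat_\sfS A$ containing $A$ is $\Modcat_\sfS A$ itself, so $\sfM=\Modcat_\sfS A$ and $\a$ is a natural isomorphism. For the final assertion, if $\mcF\iso -\tensor_A Y$ for some $A$-$B$-bimodule $Y$, evaluating this isomorphism of $\sfS$-functors at the regular right $A$-module $A$ yields $\mcF A\iso A\tensor_A Y$ in $\Modcat_\sfS B$, and Lemma~\ref{lem:wattsunit} applied to the left $A$-module $Y$ identifies $A\tensor_A Y$ canonically with $Y$; one checks that the resulting isomorphism $\mcF A\iso Y$ respects the left $A$-action (which on $\mcF A$ is the one defined through the enrichment $\mcF_{A,A}$), so $Y\iso\mcF A$ as $A$-$B$-bimodules.

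I expect essentially all the real difficulty to be front-loaded into the lemmas already in hand: the genuinely delicate steps are the construction of the comparison map in Lemma~\ref{lem:compare}, the hexagon chase establishing enriched naturality in Lemma~\ref{lem:wattsnat}, and the identification of $\a_A$ with the canonical isomorphism in Lemma~\ref{lem:wattsunit}. What remains in the proof of the theorem is the ``standard trick'', and the only point meriting attention is that the d\'evissage is legitimate --- that $\sfM$ is genuinely localizing, so that containing a compact generator forces $\sfM=\Modcat_\sfS A$ --- which is precisely what Lemmas~\ref{lem:wattsloc} and~\ref{lem:Agen} supply.
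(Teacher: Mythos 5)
Your proposal is correct and follows essentially the same route as the paper: the ``if'' direction is Proposition~\ref{prop:tensorY}, and the converse is the enriched natural transformation $\a$ from Lemmas~\ref{lem:compare}--\ref{lem:wattsnat}, shown to be invertible by combining Lemma~\ref{lem:wattsunit} ($\a_A$ is an isomorphism), Lemma~\ref{lem:wattsloc} ($\sfM$ is localizing), and Lemma~\ref{lem:Agen} ($A$ compactly generates $\Modcat_\sfS A$). Your extra paragraph identifying $Y\iso\mcF A$ from $\mcF\iso-\tensor_A Y$ via Lemma~\ref{lem:wattsunit} is a small addition the paper leaves implicit, not a different argument.
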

\begin{proof}
    We have already proved one direction in Proposition~\ref{prop:tensorY}, namely that given an $A$-$B$-bimodule $Y$ the functor $-\otimes_A Y$ verifies the required properties.

    Suppose on the other hand that $\mcF$ is an $\sfS$ functor as in the statement. By Lemma~\ref{lem:wattsnat} there is an enriched natural transformation
    \begin{displaymath}
        \a\colon {-\otimes_A \mcF A} \to \mcF \\
    \end{displaymath}
    and we claim this is a natural isomorphism. From Lemma \ref{lem:wattsloc} we know that the full subcategory $\sfM$ consisting of those objects $M\in \Modcat_\sfS A$ for which $\a_M$ is an isomorphism is localizing in $\Modcat_\sfS A$.  We learned in Lemma~\ref{lem:wattsunit} that $\a_A$ is an isomorphism and hence $A$ is an object of this localizing subcategory.  Thus by the previous lemma we have
    \begin{displaymath}
        \Modcat_\sfS A = \langle A \rangle \subseteq \sfM,
    \end{displaymath}
    proving that $\a$ is indeed a natural isomorphism.
\end{proof}

\begin{rem}
    The theorem provides further moral support for the notion that tensor products on, and enrichments of, triangulated categories should be somehow connected to more conventional enhancements, for instance by dg-categories or model categories. The vague idea is that, in the case where one has a suitable enhancement, tensoring with a bimodule object should automatically admit an enhancement and so showing a functor is enriched (as in the theorem) also shows it admits an enhancement. On the other hand there are various results showing that enhanced functors are given by the appropriate notion of bimodules and so their induced functors on homotopy categories should lift to enriched functors.
\end{rem}

We finish by sketching a relatively simple application of the theorem.

\begin{cor}
    Let $k$ be a field and let $A$ and $B$ be separable $k$-algebras.  Then any coproduct preserving exact $k$-linear functor $\sfF\colon \D(A) \to \D(B)$ between the unbounded derived categories is given by tensoring with the $A$-$B$-bimodule $\sfF A$.
\end{cor}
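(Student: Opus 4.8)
The plan is to realise this as a special case of Theorem~\ref{thm:tensorrep}, taking $\sfS=\D(k)$. Several hypotheses must be checked, but all of them are routine. The category $\D(k)$ is a compactly generated tensor $\infty$-triangulated category: it is the homotopy category of the usual model structure on complexes of $k$-vector spaces, which supplies the higher octahedra; its compact objects are the perfect complexes and form a tensor subcategory; and the tensor unit $k$ is a compact generator. Viewing $A$ and $B$ as monoids in $\D(k)$ concentrated in degree zero, flatness over the field $k$ ensures that $A\tensor_k A$ already computes the derived tensor product, so a classical $A$-bimodule section of the multiplication is the same datum as a bimodule section in $\D(k)$; hence $A$ and $B$ are separable monoids in the sense used here. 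I would then identify $\Modcat_{\D(k)}A$ with $\D(A)$: forgetting the $A$-action on a complex of $A$-modules to a complex of $k$-vector spaces and retaining the (underived) action map defines an exact, coproduct-preserving functor $\D(A)\to\Modcat_{\D(k)}A$ sending the regular module to $(A,\mu)$; the regular module and $(A,\mu)$ are compact generators of $\D(A)$ and $\Modcat_{\D(k)}A$ respectively --- the latter by Lemma~\ref{lem:Agen} --- and the functor induces an isomorphism on the graded endomorphism object of this generator ($A$ in degree zero on either side), so it is an exact equivalence (an exact, coproduct-preserving functor sending a compact generator to a compact generator and bijective on its graded endomorphisms is an equivalence). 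Under this identification $\cModcat_{\D(k)}A$ becomes a $\D(k)$-enrichment of $\D(A)$ with hom objects $[M,N]_A=\mathbf{R}\Hom_A(M,N)$, and the paper's tensor product $-\tensor_A Y$ is the ordinary tensor product of complexes over $A$ --- which is already derived, separable $k$-algebras being semisimple.

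The step that genuinely uses the field hypothesis --- and is the only real obstacle --- is promoting the given exact, coproduct-preserving, $k$-linear functor $\sfF\colon\D(A)\to\D(B)$ to a $\D(k)$-enriched functor $\mcF$. The key point is that over a field a morphism in $\D(k)$ is determined by its effect on homology, so the $k$-linear maps $\sfF_{\S^iM,N}$, $i\in\Z$, assemble into a canonical morphism $[M,N]_A\to[\sfF M,\sfF N]_B$ in $\D(k)$ for all $M,N$, and the compatibility of these morphisms with composition and units --- being equalities of morphisms in $\D(k)$ --- may be verified on homology, where it reduces to functoriality of $\sfF$. This produces $\mcF$. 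Alternatively one could invoke Proposition~\ref{prop:action-enr-fun}, checking that $\sfF$ commutes with the $\D(k)$-action $s\tensor_k-$ through the canonical comparison maps: this is immediate for finite-dimensional $s$ by additivity, and holds in general because every object of $\D(k)$ is a coproduct of suspensions of $\unit$ and $\sfF$ preserves coproducts and suspensions. The same observation shows that $\mcF$ preserves copowers.

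With this in hand, $\mcF\colon\cModcat_{\D(k)}A\to\cModcat_{\D(k)}B$ is an $\sfS$-functor that preserves copowers and whose underlying functor $\sfF$ is exact and coproduct preserving, so Theorem~\ref{thm:tensorrep} produces an $A$-$B$-bimodule $Y\iso\mcF A$ with $\mcF\iso -\tensor_A Y$. Passing to underlying functors and unwinding the identifications above yields $\sfF\iso -\tensor_A\sfF A$, which is the assertion.
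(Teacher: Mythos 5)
Your proposal is correct, and its overall architecture is the one the paper uses: check that $A$ and $B$ are separable monoids in $\D(k)$, identify $\D(A)$ with $\Modcat_{\D(k)}A$, lift $\sfF$ to a copower preserving $\D(k)$-functor by exploiting that every object of $\D(k)$ is a coproduct of suspensions of $\unit$ (your ``route (b)'' via Proposition~\ref{prop:action-enr-fun} is exactly the paper's argument, coherence being essentially free from the universal property of coproducts), and then invoke Theorem~\ref{thm:tensorrep}. The genuine divergence is in the middle step: the paper simply cites Balmer's result (\cite[Theorem~6.5]{Balmer_sep}) for the canonical equivalence $\D(A)\simeq\Modcat_{\D(k)}A$ as $\infty$-triangulated categories, whereas you re-derive it by hand, producing an exact coproduct preserving comparison functor, noting that it sends the compact generator $A$ of $\D(A)$ to the compact generator $(A,\mu)$ of $\Modcat_{\D(k)}A$ (Lemma~\ref{lem:Agen}) and is bijective on its graded endomorphisms, and appealing to the standard d\'evissage criterion for such a functor to be an equivalence. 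That argument is sound and self-contained, though it only delivers a triangulated equivalence rather than the $\infty$-triangulated one --- which is all that is needed here, since the equivalence is only used to transport $\sfF$ and the conclusion. Your additional ``route (a)'' --- building the enrichment directly on homology using semisimplicity of $\D(k)$ --- is a field-specific alternative the paper does not pursue; it works, but the compatibility with composition implicitly uses the K\"unneth identification and the coherence of $\sfF\S\iso\S\sfF$, so it is no cheaper than the route through Proposition~\ref{prop:action-enr-fun}.
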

\begin{proof}
    Being flat over $k$, $A$ and $B$, viewed as stalk complexes, are separable monoids in $\D(k)$.  By \cite[Theorem~6.5]{Balmer_sep}, there is a canonical equivalence $\D(A) \simeq \Modcat_{\D(k)} A$ as $\infty$-triangulated categories (and similarly for $B$).  
    
    In order to apply Theorem \ref{thm:tensorrep} we thus need to produce a copower preserving $\D(k)$-functor
    \begin{displaymath}
        \mcF\colon \cModcat_{\D(k)} A \to \cModcat_{\D(k)} B
    \end{displaymath}
    lifting $\sfF$. By Proposition~\ref{prop:action-enr-fun} it is sufficient to produce coherent comparison maps
    \begin{displaymath}
        x\odot \sfF M \stackrel\sim\to \sfF(x\odot M)
    \end{displaymath}
    for all $x\in \D(k)$ and $M\in \Modcat_{\D(k)}A$. As every object in $\D(k)$ is a sum of suspensions of copies of $k$, copowers by objects of $\D(k)$ are just given by taking direct sums and suspensions. Since $\sfF$ preserves the suspension and coproducts one can construct such a family of coherent comparison maps using $\S \sfF \cong \sfF \S$ and the universal property of coproducts in the evident way. The cocycle condition is essentially for free due to the universal property of coproducts. Thus there is an enriched lift $\mcF$ of $\sfF$ to which we can apply the theorem and we conclude that
    \begin{displaymath}
        \sfF \cong -\otimes_A \sfF A
    \end{displaymath}
    by taking underlying functors.
\end{proof}

\begin{rem}
    We have been unable to extend the above corollary to more general settings while maintaining reasonable hypotheses. Although the condition of Proposition~\ref{prop:action-enr-fun} appears very mild it seems very difficult to find checkable assumptions that allow one to verify it in abstract settings. However, we believe that in concrete situations the theorem could be of use. Moreover, provided one restricts from the beginning to the enriched setting it should also allow one to develop some Morita theory for separable monoids and perfom Tannaka type reconstruction at the level of enriched triangulated categories.
\end{rem}

\bibliography{enrichedrep}
\bibliographystyle{amsplain}

\end{document}